\definecolor{darkblue}{rgb}{0.0,0.0,0.3}
\def\boxit{$\sqcap\kern-8pt\sqcup$}
\title{How many circuits determine an oriented matroid?}
\dedicatory{This paper is dedicated to the memory of Michel Las Vergnas}
\author{Kolja Knauer}
\address[Kolja Knauer]{Laboratoire d'Informatique Fondamentale, Aix-Marseille Universit\'e and CNRS, Facult\'e des Sciences de Luminy, F-13288 Marseille Cedex 09, France}
\email{kolja.knauer@lif.univ-mrs.fr}
\author{Luis Pedro Montejano}
\address[Luis Pedro Montejano]{Universit\'e de Montpellier, Institut Montpelli\'erain Alexander Grothendieck, Case Courrier 051, Place Eug\`ene Bataillon, 34095 Montpellier Cedex 05, France}
\email{lpmontejano@gmail.com}
\author{Jorge Luis Ram\'irez Alfons\'in}
\address[Jorge Luis Ram\'irez Alfons\'in]{Universit\'e de Montpellier, Institut Montpelli\'erain Alexander Grothendieck, Case Courrier 051, Place Eug\`ene Bataillon, 34095 Montpellier Cedex 05, France}
\email{jramirez@um2.fr}
\thanks{The authors were supported by grant ANR-10-BLAN 0207 and and PICS06316}
\thanks{The first author was also supported by ANR grant EGOS ANR-12-JS02-002-01 and PEPS grant EROS}
\thanks{The second author was also supported by the grant LAISLA}
\keywords{Matroids, Oriented Matroids, Graphs, Coverings}
\subjclass[2010]{52C40, 05B35}
\date{\today}
\theoremstyle{plain}
\newtheorem{theorem}[subsection]{Theorem}
\newtheorem{lemma}[subsection]{Lemma}
\newtheorem{proposition}[subsection]{Proposition}
\newtheorem{corollary}[subsection]{Corollary}
\newtheorem{observation}[subsection]{Observation}
\newtheorem{conjecture}{Conjecture}
\newtheorem{example}{Example}
\newcommand{\C}{\mathrm{C}}
\newcommand{\Cc}{\mathrm{C}}
\newcommand{\CC}{\mathrm{CC}}
\newcommand{\WC}{\mathrm{WC}}
\newcommand{\ccc}{\mathrm{cc}}
\newcommand{\cc}{\mathrm{c}}
\newcommand{\cbc}{\mathrm{cbc}}
\newcommand{\bc}{\mathrm{bc}}
\newcommand{\crc}{\mathrm{circ}}
\newcommand{\M}{\mathcal{M}}
\newcommand{\Mm}{\mathrm{M}}
\renewcommand{\S}{\mathcal{S}}
\newcommand{\Ss}{\mathrm{S}}
\begin{document}

\begin{abstract} 
Las Vergnas \& Hamidoune studied the number of circuits needed to determine an 
oriented matroid. In this paper we investigate this problem and some new variants, 
as well as their interpretation in particular classes of matroids.
We present general upper and lower bounds in the setting of general connected orientable matroids,
leading to the study of subgraphs of the base graph and the intersection graph of circuits. 

We then consider the problem for uniform matroids which is closely related to 
the notion of \emph{(connected) covering numbers} in Design Theory. Finally, we also devote special
attention to regular matroids as well as some graphic and cographic matroids leading 
in particular to the topics of (connected) bond and cycle covers in Graph Theory.
%
%
\end{abstract}

\maketitle

\section{Introduction}

For the general background on matroid and oriented matroid theory we refer the reader to \cite{Ox-99} and \cite{Bjo-99}, respectively.
An (oriented) matroid is a finite ground set together with a (usually large) set of (oriented) circuits satisfying certain axioms. {But, how many of these circuits are actually needed to fully describe a given (oriented) matroid ?} 

In~\cite[page 721]{Leh-64} Lehman shows, that the set $\Ss_e$ of circuits of a connected matroid $\Mm$ containing a fixed element $e$, distinguishes $\Mm$ from all other matroids on the same ground set, that is, if a matroid $\Mm'$ on the same ground contains all circuits from $\Ss_e$ then  $\Mm$ and $\Mm'$ are the same. 

\smallskip

Las Vergnas and Hamidoune~\cite{Ham-86} extend Lehman's result to  {an oriented version}. They prove that a connected oriented matroid $\M$ is uniquely determined by the collection of signed circuits $\S_e$ containing a given element $e$, i.e., if an oriented matroid $\M'$ on the same ground set as $\M$ contains all circuits from $\S_e$ then $\M$ and $\M'$ are the same. 

%

In view of Las Vergnas and Hamidoune's result, one may ask the following natural question:

\begin{quote}
{\em How many circuits are needed to determine a connected oriented matroid?}
\end{quote}

\subsection{Scope/general interest} It turns out that the above question can be interpreted in different ways. In this paper,  we will investigate the number of circuits needed to determine an oriented matroid among all oriented matroids with the same underlying matroid.  Let us introduce some notation in order to explain this more precisely. Generally, we represent matroids and oriented matroids as pairs of a ground set and a set of (signed) circuits. Throughout this paper we use calligraphic letters for sets of signed circuits and oriented matroids and non-italic roman letters for the non-oriented case. We say that two matroids $\mathrm{M}_1=(E_1,\mathrm{C}_1)$ and $\mathrm{M}_2=(E_2,\mathrm{C}_2)$ are the \emph{same}, i.e., $\mathrm{M}_1=\mathrm{M}_2$ if and only if $E_1=E_2$ and $\mathrm{C}_1=\mathrm{C}_2$.
Note that the equality is more restrictive than \emph{isomorphism} even when restricted to the same ground set, where the latter means that there is a permutation of the ground set which preserves circuits. This is illustrated in the following

\begin{example}\label{exmpl:isomorphic}
 Let $\mathrm{M}(G_1),\mathrm{M}(G_2)$ and $\mathrm{M}(G_3)$ be the graphic matroids associated to the graphs  $G_1,G_2$ and $G_3$ given in Figure~\ref{fig:ex1}. 
 
 \begin{figure}[htb] 
 \includegraphics[width=.5\textwidth]{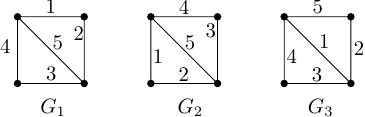}
 \caption{} \label{fig:ex1}
\end{figure}

We clearly have that $\mathrm{M}(G_1)=\mathrm{M}(G_2)$ since $$\Cc(\mathrm{M}(G_1))=\{\{1,2,5\},\{3,4,5\},\{1,2,3,4\}\}=\Cc(\mathrm{M}(G_2)).$$ 
However, although $\mathrm{M}(G_1)$ and $\mathrm{M}(G_3)$ are isomorphic by taking the permutation
$\pi(1)=5,\pi(2)=2,\pi(3)=3,\pi(4)=4$ and $\pi(5)=1$ we have that
$\mathrm{M}(G_1)\neq \mathrm{M}(G_3)$ since $$\Cc(\mathrm{M}(G_1))=\{\{1,2,5\},\{3,4,5\},\{1,2,3,4\}\}\neq \{\{1,2,5\},\{1,3,4\},\{2,3,4,5\}\}=\Cc(\mathrm{M}(G_3)).$$

\end{example}

Similarly as for the non-oriented case, we say that two oriented matroids $\mathcal{M}_1=(E_1,\mathcal{C}_1)$ and $\mathcal{M}_2=(E_2,\mathcal{C}_2)$ are the {\em same}, i.e., $\mathcal{M}_1=\mathcal{M}_2$ if $E_1=E_2$ and $\mathcal{C}_1=\mathcal{C}_2$. For a signed set $\mathcal{X}$ we denote by $\underline{\mathcal{X}}$ its \emph{underlying unsigned set}. We extend this notation to sets of signed sets and furthermore denote by $\underline{\mathcal{M}}$ the \emph{underlying matroid} of the oriented matroid $\mathcal{M}$. In this case we say that $\mathcal{M}$ is an \emph{orientation} of $\underline{\mathcal{M}}$. For a subset of circuits $\mathrm{S}$ of a matroid $\mathrm{M}$ and an orientation $\mathcal{M}$ of $\mathrm{M}$ we denote by $\mathcal{S}_{\mathcal{M}}$ the (maximal) set of signed circuits of $\mathcal{M}$ such that $\underline{\mathcal{S}_{\mathcal{M}}}=\Ss$. We call $\mathcal{S}_{\mathcal{M}}$ the \emph{orientation of $\Ss$ corresponding to $\mathcal{M}$}.
\medskip

We say that a set $\S$ of signed circuits of $\M$ \emph{determines $\M$} if an orientation $\M'$ of $\Mm$ contains the set of signed circuits $\S$  if and only if $\M=\M'$. We say that a set of circuits $\Ss$ of $\Mm$ \emph{determines all orientations of $\Mm$} if for every orientation $\M$ of $\Mm$ the corresponding orientation $\S_{\M}$ of $\Ss$ determines $\M$.  
\smallskip

The above mentioned result of Las Vergnas and Hamidoune  can be restated as follows 

\begin{eqnarray}\label{LVH1}
 \text{the set $\Ss_e$ determines all orientations of $\Mm$.}
\end{eqnarray}
 
{In this spirit,  we define three different quantities for a  connected orientable matroid $\Mm$ in order to investigate the number of  circuits needed to determine it.
\medskip

{Let {\boldmath $s(\Mm)$} be the minimum size of a set $\Ss$ of circuits of $\Mm$ determining all orientations of $\Mm$. }
\medskip

We notice that $s(\Mm)$ requires a {\em fixed} set of (non-oriented) circuits $\Ss$ that 
will be used to determine any orientations of $\Mm$ ($\Ss$ is chosen independently of the orientation of $\Mm$). One may naturally ask whether the size of such a set of circuits can be improved for each fixed orientation.
\medskip

{Let {\boldmath $\widetilde{s}(\mathrm{M})$} be the smallest positive integer $k$ such that in any orientation $\M$ of $M$ there is a set $\S$ of signed circuits of $\M$ of size $k$ that determines $\M$.}
\medskip

{Finally, in this context, we introduce a third variant.}
\medskip

{Let {\boldmath $\overline{s}(\Mm)$} be the smallest $k$ such that \emph{any} set $\Ss$ of circuits of $\Mm$ of size $k$ determines all orientations of $\Mm$.}
\smallskip

The parameter $s(\Mm)$ has been investigated in~\cite{For-98,For-02,Cha-13} for uniform oriented matroids while $\widetilde{s}(\Mm)$ has already been studied in~\cite{daS-07} in connection with a problem about realizability of rank $3$ matroids.  As far as we are aware, {$\overline{s}$ has not been considered before. By the result of Las Vergnas and Hamidoune and simply by definition, respectively, we get

\begin{observation}\label{obs:three} Let $\Mm$ be a connected orientable matroid and $e$ an element. Then,  $$s(\Mm)\le |\Ss_e| \text{ and } \widetilde{s}(\Mm)\leq s(\Mm)\leq\overline{s}(\Mm).$$
\end{observation}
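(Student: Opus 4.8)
The plan is to verify the three inequalities separately, each straight from the definitions, the only non-trivial ingredient being \eqref{LVH1}. For $s(\Mm)\le|\Ss_e|$ I would simply note that, by \eqref{LVH1}, $\Ss_e$ is a set of circuits of $\Mm$ that determines all orientations of $\Mm$, so it is an admissible choice in the minimisation defining $s(\Mm)$, whence $s(\Mm)\le|\Ss_e|$.

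For $s(\Mm)\le\overline s(\Mm)$ I would first record that $\overline s(\Mm)$ is well defined and that there actually is a set of circuits of $\Mm$ of size exactly $\overline s(\Mm)$: the property ``every set of $j$ circuits of $\Mm$ determines all orientations of $\Mm$'' is monotone in $j$, since any set of circuits containing a determining set is itself determining, and it holds for $j=|\mathrm{C}(\Mm)|$ because the full circuit set $\mathrm{C}(\Mm)$ determines all orientations --- if an orientation $\M'$ of $\Mm$ contains every signed circuit of a given orientation $\M$, then, using that the signed circuits of an oriented matroid come in opposite pairs and that two signed circuits with the same support differ only by a global sign, one obtains $\mathcal{C}(\M')=\mathcal{C}(\M)$ and hence $\M'=\M$. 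Thus $\overline s(\Mm)\le|\mathrm{C}(\Mm)|$, so one can fix a set $\Ss$ of $\overline s(\Mm)$ circuits of $\Mm$; by definition of $\overline s$ this $\Ss$ determines all orientations of $\Mm$, and therefore $s(\Mm)\le|\Ss|=\overline s(\Mm)$.

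For $\widetilde s(\Mm)\le s(\Mm)$ I would fix a set $\Ss$ of $s(\Mm)$ circuits of $\Mm$ that determines all orientations of $\Mm$ and let $\M$ be an arbitrary orientation of $\Mm$. Choosing one of the two signed circuits of $\M$ lying above each member of $\Ss$ produces a set $\S$ of signed circuits of $\M$ with $\underline{\S}=\Ss$ and $|\S|=|\Ss|=s(\Mm)$. Since in any oriented matroid a signed circuit belongs to the circuit set if and only if its negative does, an orientation $\M'$ of $\Mm$ contains $\S$ if and only if it contains the whole corresponding orientation $\S_{\M}$ of $\Ss$; as $\S_{\M}$ determines $\M$ by the choice of $\Ss$, so does $\S$, giving $\widetilde s(\Mm)\le|\S|=s(\Mm)$. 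The whole argument is essentially bookkeeping; the only points needing a little care are the well-definedness of $\overline s$ and keeping the accounting between an unsigned circuit of $\Mm$ and the opposite pair of signed circuits above it consistent, neither of which is a genuine obstacle.
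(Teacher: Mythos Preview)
Your argument is correct and follows exactly the route the paper intends: the first inequality is \eqref{LVH1}, and the chain $\widetilde{s}(\Mm)\le s(\Mm)\le\overline{s}(\Mm)$ is unwound directly from the definitions. The paper gives no explicit proof beyond the phrase ``by the result of Las Vergnas and Hamidoune and simply by definition,'' so your extra care in checking that $\overline{s}(\Mm)\le|\mathrm{C}(\Mm)|$ and in passing from the pair $\S_{\M}$ to a half-size set $\S$ of representatives is a legitimate filling-in of details the paper leaves implicit, not a departure from its approach.
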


\subsection{Motivations and connections} The quantities  $s(\Mm), \widetilde{s}(\Mm)$ and $\overline{s}(\Mm)$ are natural invariants to be investigated. We expect that these quantities will provide useful and interesting insights in the following appealing and challenging subjects. We leave this for further future work.
\medskip

$\bullet$ The quantities  $s(\Mm), \widetilde{s}(\Mm)$ and $\overline{s}(\Mm)$ may provide efficient ways for encoding oriented matroids by giving only a partial list of circuits. Counting the number of orientations of a given matroid and storing orientations of a matroid is non-trivial. The above quantities may help to simplify this process (and thus bounds on the values of $s(\Mm), \widetilde{s}(\Mm)$ and $\overline{s}(\Mm)$ would be worthwhile).  

\medskip

$\bullet$ In \cite{For-98} the relation between $s(\Mm)$ and both special {\em coverings} in Block Design Theory as well as Tur\'an systems (see Section \ref{sec:uniform}) is put forward and is attractive to study on this context. 

\medskip

$\bullet$ We shall see that some of our results do not rely on the Topological Representation Theorem for oriented matroids but only
on {\em invertible bases}. It might be of interest to investigate whether such results also hold in the wider context of {\em matroids with coefficients} \cite{Dress-91} or maybe for {\em complex matroids} \cite{And12}.

\medskip

$\bullet$ The reorientation classes of a matroid are an important concept in oriented matroid theory. They have a natural graphic (resp. geometric) interpretation for graphic (resp. representable) oriented matroids. In \cite{Gel-95} a characterization of reorientation classes of an oriented matroid by using {\em projective orientations} of $\Mm$ in terms of combinatorics of its circuits and cocircuits is given. This is of interest and significance in the study of stratifications of combinatorial Grassmannians. Therefore, it is attractive to understand the space of oriented matroids over a given matroid (and the quantities $s(\Mm)$ and $\widetilde {s}(\Mm)$). 

\medskip

$\bullet$ An interesting class of oriented matroids is that of those having exactly one reorientation class.  Indeed, it is known ~\cite{Bla-78} that any \emph{regular} oriented matroid has exactly one reorientation class.  In~\cite{Rou-88} it is proved that regular matroids are characterized as those oriented matroids such that all restrictions have only one reorientation class. The following long standing conjecture is due to Las Vergnas ~\cite{Las-89}

\begin{conjecture}
The matroids $C_d$ obtained from the affine $d$-dimensional hypercube has a single reorientation class for all $d$.
\end{conjecture}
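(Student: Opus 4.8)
We do not settle this conjecture here, but we describe the approach that seems to us the most promising, together with the point at which it becomes difficult. Write $C_d$ for the rank-$(d+1)$ oriented matroid on the vertex set $V_d=\{0,1\}^d$ of the affine $d$-cube and let $\mathcal{C}_d$ be its realizable orientation. A short computation shows that the $4$-element circuits of $\underline{C_d}$ are exactly the vertex quadruples $\{v,v',w,w'\}$ with $v+v'=w+w'$, that is, the affine parallelograms contained in the cube (in particular all $2$-faces); call such a circuit a \emph{quad}. The restriction of $C_d$ to any quad is isomorphic to $U_{3,4}$, and for $d\ge 2$ any two vertices of the cube lie in a common quad. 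From these facts one checks that the conjecture reduces to showing that the quad circuits \emph{determine all orientations of $C_d$} in the sense of this paper (and, conversely, the conjecture implies this statement, so the two are essentially equivalent); in particular it would yield $s(C_d)\le\#\{\text{quads}\}$, and it is this link with the circuit-determination parameters studied here that makes the reformulation attractive. The plan is to prove it by induction on $d$, the cases $d\le 2$ being immediate since $\underline{C_d}$ then has at most one circuit.

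The first step is a consistency argument. Since $U_{3,4}$ has a single reorientation class, the signed circuit that a given orientation $\M$ of $\underline{C_d}$ induces on any quad $Q$ agrees with the one induced by $\mathcal{C}_d$ up to a reorientation of $Q$. One then needs a \emph{single} reorientation of $\M$ making it agree with $\mathcal{C}_d$ on \emph{all} quads at once. Encoding the reorientation by indicators $b_v\in\{0,1\}$ and comparing the two sign vectors quad by quad turns this into a system of equations $b_v+b_w=c_{vw}$ over $\mathrm{GF}(2)$, one for each pair of vertices lying in a common quad; since those pairs form a connected graph on $V_d$, the system is solvable once the constants $c_{vw}$ are shown to be consistent around cycles, and this consistency ought to follow from the orthogonality relations obeyed by $\M$ and by $\mathcal{C}_d$. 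After this reduction it remains to show that an orientation of $\underline{C_d}$ agreeing with $\mathcal{C}_d$ on every quad must equal $\mathcal{C}_d$.

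For this we would induct using the prism structure of the cube: each facet $x_i=\varepsilon$ restricts $C_d$ to a copy of $C_{d-1}$, so by the inductive hypothesis the restriction of $\M$ to either facet is a reorientation of $\mathcal{C}_{d-1}$ and, after the first step, actually agrees with it there. Two opposite facets are joined by the perfect matching of ``vertical'' edges; the quads spanned by two vertical edges, together with the orthogonality of the remaining circuits against the facet-defining cocircuits and their small perturbations, should propagate the sign data from the two facets to all of $C_d$ through a chain of modular circuit eliminations started at quads.

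The crux, and the place where the argument is genuinely hard, is exactly this last propagation. First, having a single reorientation class is not inherited by all restrictions of $C_d$: for $d$ large $C_d$ is not regular --- otherwise the conjecture would already follow from the cited result that regular oriented matroids have a single reorientation class --- so, by the cited characterization of regularity through restrictions, some restriction of $C_d$ fails the property, and the induction cannot reduce to ``restrict to a facet and conclude''; the way the two opposite facets are glued must be used in an essential way, and one must rule out a global twist between the two facet orientations that is invisible on each facet separately. Second, $\underline{C_d}$ is non-binary, so circuit elimination carries no canonical signs, and the elimination chains issuing from quads have to be chosen with the geometry of the cube in mind. A realistic intermediate goal, of independent interest for the parameters of this paper, would be to push this method through for $C_3$ and $C_4$, or to isolate a small subfamily of quads that still determines all orientations and thereby bound $s(C_d)$.
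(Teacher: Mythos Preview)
The paper does not prove this statement: it is presented there as an open conjecture of Las~Vergnas (Conjecture~1), cited purely as motivation for studying the parameters $s$, $\widetilde{s}$, $\overline{s}$, and the only additional information given is that it has been verified for $d\le 7$. There is therefore no proof in the paper to compare your proposal against, and your opening sentence, ``We do not settle this conjecture here,'' is entirely appropriate.

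As for the sketch itself: your identification of the $4$-element circuits of $\underline{C_d}$ with the affine parallelograms $\{v,v',w,w'\}$ satisfying $v+v'=w+w'$ is correct, and the observation that each such restriction is $U_{3,4}$ (hence has a single reorientation class) is fine. The reduction you propose---show that the quad circuits determine all orientations of $C_d$---is a reasonable reformulation in the language of this paper, and the $\mathrm{GF}(2)$ consistency argument for patching together the local reorientations is the standard first move. You are also right that the genuine obstruction lies in the propagation step: $C_d$ is not regular for $d\ge 4$, so the single-reorientation-class property is not inherited by restrictions, and one cannot simply induct on facets without controlling the gluing. This is exactly where all known attempts stall, and your proposal does not get past it either; it remains an outline rather than a proof. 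If you want to contribute something concrete in the spirit of the paper, the ``realistic intermediate goal'' you mention at the end---bounding $s(C_d)$ via a small determining family of quads---is the part most worth pursuing.
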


$C_d$ is a subclass of cubic matroids~\cite{daS-08}.
It is known \cite{Bok-96} that the conjecture is true when $d\leq 7$. 
\medskip

Investigations on $s(\Mm)$ and $\widetilde {s}(\Mm)$ when $\Mm$ has one reorientation class therefore deserve particular attention. As noticed in Subsection \ref{subsec:reorie} 
and at the beginning of Section~\ref{sec:regular} the quantities $s(\Mm)$ and $\widetilde {s}(\Mm)$ are closely related to the fact that determining a regular oriented matroid is equivalent to finding a
{\em connected covering} of its elements by circuits. The latter generalizes the widely studied problem of {\em circuit covers} in matroids~\cite{Sey-80,Lem-06,mcG-10} and {\em cycle covers} in graphs~\cite{Fan-02,Lai-03,Yan-11}.
\medskip

$\bullet$ The study of $\widetilde{s}(U_{r,n})$ naturally leads us to consider the so-called {\em mutation} operation in uniform oriented matroids. A challenging question concerning mutations is the following famous simplex conjecture
 of Las Vergnas.
\begin{conjecture}
 Every uniform oriented matroid has at least one mutation.
\end{conjecture}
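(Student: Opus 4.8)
The plan is to translate the statement into the geometry of the Topological Representation Theorem. A uniform oriented matroid $\M$ of rank $r$ on $n$ elements is represented by a \emph{simple} arrangement $\mathcal{A}$ of oriented pseudospheres $S^{r-2}$ inside $S^{r-1}$ (simple precisely because $\M$ is uniform), and under this dictionary a mutation of $\M$ is exactly a \emph{simplicial cell} of $\mathcal{A}$, i.e.\ a top-dimensional region bounded by the minimum possible number $r$ of pseudospheres. So one wants: every simple arrangement of pseudospheres has a simplicial cell. First I would clear the small cases. Ranks $r\le 2$ are immediate. For $r=3$ the arrangement is a simple arrangement of pseudolines in the projective plane, and the existence of a simplicial cell (a triangle) is Levi's classical theorem; note already here that the known proof is a global extremal/sweeping argument, not a purely local one, which is a warning sign for the general case.

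For general rank the obvious strategy is induction on $r$ (or on $n$) via minors. Contracting an element $e$ yields the uniform oriented matroid $\M/e$ of rank $r-1$, realized geometrically as the arrangement induced on the pseudosphere $S_e$; by induction $\M/e$ has a simplicial cell $\sigma\subset S_e$, bounded there by exactly $r-1$ pseudospheres. The hope is that one of the two cells of $\mathcal{A}$ incident to $\sigma$ across $S_e$ is itself simplicial, i.e.\ is cut by no further pseudosphere. This is where the argument stalls: a pseudosphere disjoint from $\sigma$ inside $S_e$ can still slice through both cells of $\mathcal{A}$ adjacent to $\sigma$, and nothing in the oriented matroid axioms rules this out for every choice of $e$ and every simplicial cell of every minor. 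In the \emph{realizable} case this obstruction disappears and the conjecture is a theorem of Shannon on simple arrangements of hyperplanes; but Shannon's proof uses genuinely linear (metric) input and does not survive the passage to pseudospheres, which is exactly the regime where the conjecture is open.

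Hence the honest assessment: the main obstacle \emph{is} the conjecture. There is no known reduction of a non-realizable instance to a realizable or to a strictly smaller one that preserves simplicial cells, and the combinatorial data of an oriented matroid seems too weak to force a local simplicial configuration directly. What does appear within reach, and is the kind of partial progress the tools of this paper suggest, is to impose hypotheses that constrain the sign vectors near a prospective simplicial cell --- for instance exploiting that a regular (or otherwise structurally rigid) oriented matroid has a single reorientation class --- and then run an Euler-characteristic or face-counting argument on the cell complex of $\mathcal{A}$ to force the number of simplicial cells to be positive. I would therefore propose to establish the statement only under such additional assumptions, and separately to record the low-rank and realizable cases as known, leaving the full conjecture open as it currently stands.
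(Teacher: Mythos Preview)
The paper does not prove this statement: it is stated as an open \emph{conjecture} of Las~Vergnas, and immediately afterwards the paper only records the known partial cases (realizable oriented matroids via Shannon, rank at most $3$ via Levi, and rank~$4$ with few elements via Bokowski). There is no proof in the paper to compare your attempt against.

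Your write-up is not a proof either, and you say so yourself. That honesty is appropriate here: your identification of the known cases (Levi for rank~$3$, Shannon for realizable) matches exactly what the paper cites, and your explanation of why the natural inductive approach via contraction minors stalls is accurate. The only minor addition the paper makes that you do not mention is the computational verification for rank~$4$ on few elements. Otherwise your assessment of the state of the problem is consistent with the paper's, and there is nothing further to correct.
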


This conjecture is known to be true only for \emph{realizable} oriented matroids~\cite{Sha-79}, oriented matroids of rank at most $3$~\cite{Lev-26}, and for rank $4$ oriented matroids with few elements~\cite{Bok-01}. 

\subsection{Organization of the paper} In the next section, we recall some oriented matroid basics used throughout the paper.  

\medskip

In Section~\ref{sec:gen-bound} we give general bounds by introducing the notion of {\em weak} and {\em connected covering} (Theorem~\ref{thm:generalbounds}). We present an upper bound for $\overline{s}(\Mm)$ (Theorem~\ref{thm:generalboundssbar}) and also study the problem of determining an oriented matroid within its reorientation class (Theorem~\ref{thm:1reclass}).

\medskip

Section~\ref{sec:uniform} is devoted to the study of uniform oriented matroids. After recalling the relationship with Design Theory we recover an earlier result given in~\cite{For-98} in a more general framework (Theorem~\ref{prop:motiv}).
We provide exact values for $\widetilde{s}(U_{n-2,n})$ with $n\ge 3$ (Theorem~\ref{Th:unif}) as well as a general lower bound for $\widetilde{s}(U_{n-r,n})$ with $3\leq r\leq n-2$ (Theorem~\ref{Th:unif1}). We finally present the exact value of $\overline{s}(U_{r,n})$ with $1\leq r\leq n-1$ (Theorem~\ref{thm:s(n,r)})

\medskip

In Section~\ref{sec:regular} we turn our attention to regular matroids. We first notice that if $\Mm$ is regular then $\widetilde{s}(\Mm)=s(\Mm)$ and both equal the size of the smallest {\em connected element covering} of $\Mm$. We then give different bounds for the latter in the case when the matroid $\Mm$ is regular (Lemma \ref{lem:makeconnected}, Theorem \ref{thm:edgevertexbound}) and in particular when $\Mm$ is graphic (Proposition \ref{prop:makeconnectedbestpossible}, Corollary \ref{cor:2conn}).

\medskip

Finally, in Section~\ref{sec:graph} we calculate the values for the graphic and cographic matroids associated to complete graphs and hypercubes. More precisely, we calculate
$\widetilde{s}(\Mm)$ and $s(\Mm)$ when 
$\Mm$ is either $\Mm(K_n), \Mm^*(K_n), \Mm(Q_n)$ or $\Mm^*(Q_n)$ where $K_n$ is the complete graph on $n$ vertices and $Q_n$ is the hypercube graph of dimension $n$}
(Theorems~\ref{thm:Qn},~\ref{thm:Qn*},~\ref{thm:Kn} and~\ref{thm:Kn*}). These results show that some of our general bounds are tight.
 
\section{Basic definitions and general bounds}\label{sec:first}


Besides circuits we sometimes also use bases to represent a matroid. Given a basis $B$ of $\Mm$ and an element $e\notin B$, there is a unique circuit $C(B,e)$ of $\Mm$ contained in $B\cup\{e\}$ called the \emph{fundamental circuit} of $B$ with respect to $e$. In the oriented case two opposite orientations of this circuit appear. We denote by $C(B,e)$ any of them if no distinction is necessary.  So this means,
$C(B,e)$ can denote the (unoriented)
circuit of the matroid or either of the two corresponding circuits of the
oriented matroid.

A {\em basis orientation} of an oriented matroid $\M$ is a mapping $\chi_{\M}$ of the set of the ordered bases of $\Mm:=\underline{\mathcal{M}}$ to $\{-1,1\}$ satisfying the following properties :


\begin{itemize}
 \item[(CH1)] $\chi_{\M}$ is alternating.
 \item[(CH2)] for any two ordered bases $B, B'$ of $\Mm$ of the form $(e,b_2,\dots ,b_r)$ and $(e',b_2,\dots ,b_r)$, $e\neq e'$, we have  $\chi_{\M}(e,b_2,\dots ,b_r)=-C(B',e)_{e'}C(B',e)_e\chi_{\M}(e',b_2,\dots b_r)$, where $C(B',e)_e$ and $C(B',e)_{e'}$ denote the sign corresponding to elements $e$ and $e'$ in $C(B',e)$ respectively.
\end{itemize}

We have that $\mathcal{M}_1=\mathcal{M}_2$ if an only if $\chi_{\M_1}=\pm \chi_{\M_2}$.

We say that a base $B'$ of an oriented matroid $\M$ with chirotope $\chi$ is {\em invertible} if 

$$\chi^{B'}(B):=\left\{\begin{array}{rr}
-\chi_{\M}(B) & \hbox{if $B=B'$,}\\
\chi_{\M}(B) & \hbox{otherwise}\\
\end{array}\right.
$$

is also the chirotope of an oriented matroid $\M^{B'}$ (obtained thus from $\chi_{\M}$ by inverting only the sign of base $B'$). In the case of uniform oriented matroids invertible bases are called \emph{mutations}.

For every subset $A\subseteq E$ and every signed set $X$ of $E$, we denote by $_{\bar A}X$ the signed set obtained from $X$ by {\em reversing signs} on $A$, i.e.,  $(_{\bar A}X)^+=(X^+\setminus A)\cup (X^-\cap A)$ and
$(_{\bar A}X)^-=(X^-\setminus A)\cup (X^+\cap A)$. The set $\{_{\bar A}C \mid C\in\mathcal{C}\}$ is the set of signed circuits of an oriented matroid, denoted by $_{\bar A}\M$. Two oriented matroids $\M$ and $\M'$ are {\em related by sign-reversal} if $\M'=_{\bar A}\M$ for some $A\subseteq E$. The equivalence classes for this relation are called {\em reorientation classes}.  Notice that our definition of reorientation classes differs from the definition that is often used in the literature which applies to {\em unlabeled oriented matroids}, i.e., apart from sign-reversal also isomorphisms are allowed transformations. See the next subsection.

\subsection{Topological representation : quick discussion}

The well-known Topological Representation Theorem due to Folkman and Lawrence~\cite{Fol-78} states that loop-free oriented matroids of rank $d+1$ (up to isomorphism) are in one-to-one correspondence with arrangements of pseudospheres in $S^{d}$ (up to topological equivalence) or equivalently to affine arrangements of pseudohyperplanes in
$\mathbb{R}^{d-1}$ (up to topological equivalence).

\medskip

Note that, as mentioned above, in the literature contrary to our definition, the term reorientation class is often applied to \emph{unlabeled} oriented matroids. For instance the equivalence relation considered by the Topological Representation Theorem identifies two oriented matroids if they can be transformed via resignings, relabelings and reorientation into each other, see e.g. the book~\cite{Bjo-99}.
For example, $U_{2,n}$ has only one topological representation
but in our sense it admits exactly $\frac{(n-1)!}{2}$ reorientation classes~\cite{Cor-90}. Example~\ref{exmpl:U24} illustrates two of the three  reorientation classes of $U_{2,4}$. 

\section{General bounds}\label{sec:gen-bound}


We derive some necessary and sufficient conditions for a set  of circuits $\Ss$ to determine all the orientations of $\Mm$ or to determine a specific $\M$. In order to do this, we introduce some matroid parameters which {will be used} as upper and lower bounds. 

We say that a circuit $C$ of $\Mm$ \emph{covers} a basis $B$ if and only if there is an element $e\in E\setminus B$ such that $C$ is the fundamental circuit $C(B,e)$ of the basis $B$ with respect to $e$. A signed circuit $C$ of $\M$ \emph{covers} a basis $B$ of $\underline{\M}$ if  $\underline{C}$ covers $B$.

\begin{proposition}\label{prop:orientedweak} Let $\M$ be an oriented matroid. If $\S$ determines $\M$ then $\S$ covers all invertible bases of $\M$.
\end{proposition}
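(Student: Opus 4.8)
The plan is to argue by contraposition: assume some invertible base $B'$ of $\M$ is not covered by any signed circuit in $\S$, and produce an orientation $\M'\neq\M$ of $\Mm$ that still contains all of $\S$. The natural candidate for $\M'$ is precisely $\M^{B'}$, the oriented matroid obtained from $\M$ by inverting the sign of the single base $B'$ in the chirotope; this exists exactly because $B'$ is invertible, and $\M^{B'}\neq\M$ since their chirotopes are not equal up to global sign (they differ on exactly one base). So the whole argument reduces to showing that every signed circuit $C\in\S$ is still a signed circuit of $\M^{B'}$.

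The key step is therefore to understand how the signed circuits change when one flips a single base in the chirotope. I would use the standard formula expressing the signs of a circuit in terms of chirotope values: for a circuit $\underline{C}$ with $\underline{C}\setminus\{f\}$ not spanning, one can pick a basis $B$ with $\underline{C}=\underline{C(B,e)}$ for the appropriate $e\notin B$, and then the relative signs of the elements of $C$ are given by ratios $\chi_{\M}(\dots)/\chi_{\M}(\dots)$ of chirotope values on bases of the form $(x, b_2,\dots,b_r)$ obtained from $B\cup\{e\}$ by deleting one element of $\underline C$ at a time. The crucial observation is that all the bases appearing in this description are bases that $\underline C$ \emph{covers} (they are obtained from $\underline C$ by removing one element and are of the form needed for $\underline C$ to be a fundamental circuit of them). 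Since $B'$ is covered by no circuit of $\S$, in particular $B'$ is not among the bases used to compute the signs of any $C\in\S$; hence $\chi^{B'}$ and $\chi_{\M}$ agree on all bases relevant to each $C\in\S$, so $C$ has the same sign pattern in $\M^{B'}$ as in $\M$, and thus $C$ is a signed circuit of $\M^{B'}$. Consequently $\S\subseteq\mathcal{C}(\M^{B'})$ with $\M^{B'}\neq\M$, so $\S$ does not determine $\M$.

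The main obstacle I anticipate is making precise and correct the claim that "the only chirotope values needed to pin down the signs of a circuit $C$ are those on bases covered by $\underline C$." One has to check that for a circuit $\underline C$, every basis $B$ with $\underline C\subseteq B\cup\{e\}$ and $\underline C=\underline{C(B,e)}$ has the property that the auxiliary bases $\{(\,\underline C\setminus\{x\}\text{-based ordered bases}\,)\}$ entering axiom (CH2) (and the alternating property (CH1)) are again bases covered by $\underline C$ — and conversely that flipping any base \emph{not} covered by $\underline C$ leaves the sign vector of $C$ untouched. This is essentially a bookkeeping lemma about fundamental circuits and chirotopes; I would isolate it as a short sublemma: \emph{if $\chi_1,\chi_2$ are chirotopes of orientations of the same matroid $\Mm$ that agree on all bases covered by a circuit $\underline C$, then the orientations of $\underline C$ in $\chi_1$ and $\chi_2$ coincide (up to reorientation of the circuit)}. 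Granting that sublemma, the proposition follows immediately by applying it to $\chi_{\M}$ and $\chi^{B'}$, which agree off $B'$ and hence agree on all bases covered by any $\underline C$ with $C\in\S$.
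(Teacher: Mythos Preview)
Your proposal is correct and follows essentially the same approach as the paper: both argue by contraposition, take an uncovered invertible base $B'$, pass to $\M^{B'}$, and use (CH2) to conclude that the orientation of each circuit in $\S$ depends only on chirotope values at bases it covers, hence is unchanged. The paper dispatches this last point in one sentence by invoking (CH2), whereas you spell out the sublemma more carefully; the content is the same.
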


\begin{proof}
Let $B$ be an invertible basis  of $\M$ which is not covered by any signed circuit in $\S$. Let $\M'$ be the oriented matroid with chirotope $\chi^B$. By (CH2), the orientation of $\underline{\S}$ in $\mathcal{M}'$ depends only on signs of bases covered by $\underline{\S}$. Thus, the set $\S$ is a subset of the set of signed circuits of $\M'$.  Therefore, $\S$ does not determine $\M$.
\end{proof}

We say that a set $\S$ of signed circuits of $\M$ is a \emph{weak covering} of $\M$ if it covers all the invertible bases of $\M$. Let {\boldmath $\widetilde{\WC}(\Mm)$} be the smallest $k$ such that in each orientation $\M$ of $\Mm$ there is a weak covering of size $k$. Analogously, a set $\Ss$ of circuits of $\Mm$ is called a \emph{weak covering} of $\Mm$ if its orientation $\S_{\M}$ in any orientation $\M$ of $\Mm$ is a weak covering of $\M$. Let {\boldmath $\WC(\Mm)$} be the size of a smallest weak covering of $\Mm$. The following results are an immediate consequence of Proposition~\ref{prop:orientedweak}.

\begin{corollary}\label{cor:weak}
 For any orientable matroid $\Mm$ we have $\widetilde{\WC}(\Mm)\leq \widetilde{s}(\Mm)$ and $\WC(\Mm)\leq s(\Mm)$.
\end{corollary}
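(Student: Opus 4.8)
The plan is to deduce both inequalities directly from Proposition~\ref{prop:orientedweak}, which already establishes that any set of signed circuits determining an oriented matroid must be a weak covering of it. The only thing left to do is to unwind the definitions of the four quantities involved and match up quantifiers.

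First I would prove $\widetilde{\WC}(\Mm)\le\widetilde{s}(\Mm)$. Set $k:=\widetilde{s}(\Mm)$. By definition of $\widetilde{s}$, every orientation $\M$ of $\Mm$ admits a set $\S$ of $k$ signed circuits that determines $\M$. Applying Proposition~\ref{prop:orientedweak} to this $\S$ shows that $\S$ covers all invertible bases of $\M$, i.e., $\S$ is a weak covering of $\M$ of size $k$. Since this works for every orientation $\M$, the minimum defining $\widetilde{\WC}(\Mm)$ is at most $k$, which is the claim.

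For $\WC(\Mm)\le s(\Mm)$ I would argue in the same spirit, but now with a fixed unoriented witness. Let $\Ss$ be a set of circuits of $\Mm$ of size $s(\Mm)$ determining all orientations of $\Mm$. Fix an arbitrary orientation $\M$ of $\Mm$; by hypothesis the corresponding orientation $\S_{\M}$ of $\Ss$ determines $\M$, so Proposition~\ref{prop:orientedweak} gives that $\S_{\M}$ is a weak covering of $\M$. As $\M$ was arbitrary, $\Ss$ is a weak covering of $\Mm$ in the sense defined just before the corollary, whence $\WC(\Mm)\le|\Ss|=s(\Mm)$.

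There is essentially no obstacle here: the corollary is a bookkeeping consequence of Proposition~\ref{prop:orientedweak} and the definitions. The only point to keep straight is the quantifier structure --- $\widetilde{\WC}$ and $\widetilde{s}$ let the witnessing set of circuits depend on the chosen orientation, while $\WC$ and $s$ demand a single unoriented set of circuits that works for all orientations simultaneously --- but the statement of Proposition~\ref{prop:orientedweak} applies verbatim in both regimes, so no extra care is needed.
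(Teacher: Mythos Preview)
Your proof is correct and matches the paper's approach exactly: the paper simply states that the corollary is ``an immediate consequence of Proposition~\ref{prop:orientedweak},'' and your argument is precisely the spelling-out of that immediacy via the definitions of the four quantities.
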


Given a set $\Ss$ of circuits of $\Mm$ define the graph $B_{\Ss}$ with vertex set the set of bases of $\Mm$ where $B, B'$ are adjacent if and only if $|B\Delta B'|=2$ and there is $C\in\Ss$ such that $C\subseteq B\cup B'$. 
Note that $B_{\Cc(M)}$ is just the base graph of $\Mm$.
A {\em base covering} is a set of circuits covering all the bases of $M$. A base covering $\Ss$ of $\Mm$ is called \emph{connected} if the graph $B_{\Ss}$ is connected.
Let {\boldmath $\CC(\Mm)$} be the size of a smallest connected base covering of $\Mm$.

\begin{theorem}\label{thm:generalbounds}
 For every connected orientable matroid $\Mm$ we have $s(\Mm)\leq \CC(\Mm)$.
\end{theorem}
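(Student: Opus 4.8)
The plan is to show that a smallest connected base covering $\Ss$ of a connected matroid $\Mm$ determines all orientations of $\Mm$; then $s(\Mm)\le|\Ss|=\CC(\Mm)$ follows by definition of $s$. So fix an orientation $\M$ of $\Mm$, let $\S=\S_\M$ be the corresponding orientation of $\Ss$, and suppose $\M'$ is an orientation of $\Mm$ containing all the signed circuits of $\S$. I want to conclude $\M=\M'$. The natural tool is the characterization recalled in the excerpt: $\M=\M'$ iff $\chi_{\M}=\pm\chi_{\M'}$, where $\chi$ is a basis orientation (chirotope). So the goal becomes: the signed circuits in $\S$, together with the connectivity of $B_{\Ss}$, pin down $\chi_{\M'}$ up to a global sign.

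The key mechanism is axiom (CH2). First I would fix a reference ordered basis $B_0$ of $\Mm$ and normalize so that $\chi_{\M'}(B_0)=\chi_{\M}(B_0)$ (this is the one global sign we are allowed to lose, matching the ``$\pm$''). Now take any basis $B$ adjacent to $B_0$ in $B_{\Ss}$: then $|B_0\Delta B|=2$, say $B=(B_0\setminus\{e\})\cup\{e'\}$, and there is a circuit $C\in\Ss$ with $\underline C\subseteq B_0\cup\{e'\}=B\cup\{e\}$ — this circuit is exactly the fundamental circuit $C(B_0,e')=C(B,e)$. Its orientation is prescribed: the signed circuit $\pm C(B,e)$ occurring in $\S$ lies in $\M'$ as well, so $C(B,e)_{e'}C(B,e)_{e}$ has the \emph{same} value whether computed in $\M$ or in $\M'$ (reversing the whole signed circuit does not change this product of two of its signs). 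Applying (CH2) in $\M$ and in $\M'$ with the appropriate reordering so that $e$, resp. $e'$, sits in the first coordinate, and using (CH1) to handle the reorderings by the same permutation sign in both matroids, I get $\chi_{\M}(B)=\chi_{\M'}(B)$ from $\chi_{\M}(B_0)=\chi_{\M'}(B_0)$. Since $B_{\Ss}$ is connected and every basis of $\Mm$ is a vertex of it (here I use that $\Ss$ is a base covering, so no basis is missing, and that the base graph of a connected matroid is connected), I can propagate this equality along a path in $B_{\Ss}$ from $B_0$ to an arbitrary basis $B$, obtaining $\chi_{\M}(B)=\chi_{\M'}(B)$ for all bases $B$ in one fixed ordering, hence $\chi_{\M}=\chi_{\M'}$ as alternating functions, i.e.\ $\M=\M'$.

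The main obstacle I anticipate is bookkeeping the ordered-basis conventions in (CH2): the axiom is stated for ordered bases sharing a common tail $(b_2,\dots,b_r)$ with only the first element differing, whereas two adjacent vertices $B_0,B$ of $B_{\Ss}$ are unordered sets differing in one element. So the careful step is to choose orderings of $B_0$ and $B$ that agree on the shared $r-1$ elements and place the symmetric-difference elements first, then note that reordering $B_0$ (resp.\ $B$) into our reference ordering multiplies $\chi_{\M}$ and $\chi_{\M'}$ by the \emph{same} sign of a permutation (by (CH1)), so the propagated equality is consistent regardless of which orderings were picked. A second, minor point to get right is that $C(B_0,e')$ and $C(B,e)$ are literally the same circuit of $\Mm$ (both are the unique circuit in $B_0\cup\{e'\}=B\cup\{e\}$), so the single signed circuit of $\S$ covering this edge of $B_{\Ss}$ does double duty. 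Once these conventions are nailed down, the propagation argument is routine, and the inequality $s(\Mm)\le\CC(\Mm)$ drops out.
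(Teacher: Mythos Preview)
Your argument is correct and follows essentially the same route as the paper's proof: pick a connected base covering $\Ss$, fix the chirotope at one basis, and propagate the value along edges of $B_{\Ss}$ using (CH2) and the signs of the circuits in $\S$, with connectivity of $B_{\Ss}$ guaranteeing the value at every basis is determined up to a global sign. Your additional bookkeeping about ordered versus unordered bases and the role of (CH1) is accurate but merely fleshes out details the paper leaves implicit; also note that the parenthetical about ``no basis missing'' is redundant, since by definition the vertex set of $B_{\Ss}$ is already the full set of bases and connectedness of $B_{\Ss}$ is part of the hypothesis.
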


\begin{proof}
Let $\Ss$ be a connected base covering of $\Mm$. Let $B$ be a basis and $B'$ a neighbor of $B$ in $B_{\Ss}$, i.e., $|B\Delta B'|=2$ and suppose there is $C\in\Ss$ such that $C\subseteq B\cup B'$. This means that there are $f\in B'$ and $e\in B$ such that $C=C(B',e)=C(B,f)$. By fixing the orientation of $\chi(B)$ using the signs of $C$ the orientation of $B'$ is determined via (CH2). 

Therefore, if $\Ss$ is a connected base covering the choice of
the value for $\chi(B)$ as well as the signings of the circuits in $\Ss$ induce a unique oriented matroid. Moreover, both choices for $\chi(B)=1$ or $=-1$ determine the same oriented matroids, with opposite chirotopes. 
\end{proof}

Let $\lambda(\Mm)$ be the largest $k$ such that for any set $\Ss$ of $k-1$ circuits the graph $B_{\Cc\setminus\Ss}$ is connected. We denote by $r(\Mm)$ the rank of $\Mm$.

\begin{theorem}\label{thm:generalboundssbar}
 For every connected orientable matroid $\Mm=(E,\Cc)$ we have $$\overline{s}(\Mm)\leq |\mathrm{C}|+1-\min(\lambda(\Mm), |E|-r(\Mm)).$$ If $\Mm$ has a base which is invertible in some orientation, then $|\mathrm{C}|+1-|E|+r(\Mm)\leq\overline{s}(\Mm)$.
\end{theorem}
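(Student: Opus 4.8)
The plan is to treat the two inequalities separately. For the upper bound, I would argue by contrapositive: suppose $\Ss$ is a set of circuits of $\Mm$ with $|\Ss| \ge |\mathrm{C}|+1-\min(\lambda(\Mm),|E|-r(\Mm))$, i.e., the complement $\mathrm{C}\setminus\Ss$ has size at most $\min(\lambda(\Mm),|E|-r(\Mm))-1$. I want to show $\Ss$ determines all orientations of $\Mm$, and by Theorem~\ref{thm:generalbounds} it suffices to show that $\Ss$ is a \emph{connected base covering}. Connectivity of $B_\Ss$ is immediate from the definition of $\lambda(\Mm)$: removing fewer than $\lambda(\Mm)$ circuits from $\mathrm{C}$ leaves the base graph connected, and $|\mathrm{C}\setminus\Ss| \le \lambda(\Mm)-1$. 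For the covering property, I would use the bound $|\mathrm{C}\setminus\Ss| \le |E|-r(\Mm)-1$: a fixed basis $B$ has exactly $|E|-r(\Mm)$ fundamental circuits $C(B,e)$, $e\in E\setminus B$, and these are pairwise distinct; since we deleted strictly fewer than $|E|-r(\Mm)$ circuits, at least one fundamental circuit of $B$ survives in $\Ss$, so $B$ is covered. Hence $\Ss$ is a connected base covering and $\overline{s}(\Mm) \le |\mathrm{C}|+1-\min(\lambda(\Mm),|E|-r(\Mm))$.

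For the lower bound, suppose $\Mm$ has a basis $B_0$ that is invertible in some orientation $\M$ of $\Mm$. I want to exhibit a set $\Ss$ of circuits of size $|\mathrm{C}|-|E|+r(\Mm)$ that does \emph{not} determine all orientations of $\Mm$, which forces $\overline{s}(\Mm) \ge |\mathrm{C}|+1-|E|+r(\Mm)$. The natural candidate is $\Ss := \mathrm{C} \setminus \{\, C(B_0,e) : e \in E\setminus B_0 \,\}$, the set of all circuits \emph{except} the $|E|-r(\Mm)$ fundamental circuits of $B_0$; this has exactly the required size, and since no circuit in $\Ss$ covers $B_0$, Proposition~\ref{prop:orientedweak} applied to $\M$ (in which $B_0$ is invertible) shows that the orientation $\Ss_\M$ does not determine $\M$. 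Thus $\Ss$ does not determine all orientations of $\Mm$, giving the claimed lower bound.

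The step I expect to require the most care is verifying that the $|E|-r(\Mm)$ fundamental circuits $C(B_0,e)$ of a fixed basis are genuinely distinct as circuits of $\Mm$ (so that deleting them really removes $|E|-r(\Mm)$ elements of $\mathrm{C}$, and symmetrically so that every basis has $|E|-r(\Mm)$ distinct fundamental circuits available in the upper-bound argument). This is standard matroid theory — distinct $e$ give distinct fundamental circuits because $e \in C(B_0,e) \setminus C(B_0,e')$ when $e \ne e'$ — but it is the linchpin that makes both the counting in the upper bound and the size computation in the lower bound come out exactly. A secondary point to handle cleanly is the interaction of the two minimands in the upper bound: when $\min(\lambda(\Mm),|E|-r(\Mm)) = |E|-r(\Mm)$ the covering property is the binding constraint, and when it equals $\lambda(\Mm)$ connectivity is; in either case the single hypothesis $|\mathrm{C}\setminus\Ss|\le\min(\lambda(\Mm),|E|-r(\Mm))-1$ gives both properties at once, so no case split in the conclusion is needed.
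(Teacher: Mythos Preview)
Your proposal is correct and follows essentially the same approach as the paper: for the upper bound you verify that any $\Ss$ of the claimed size is a connected base covering (connectivity from the definition of $\lambda(\Mm)$, covering from the count of $|E|-r(\Mm)$ fundamental circuits per basis) and invoke Theorem~\ref{thm:generalbounds}; for the lower bound you remove the fundamental circuits of an invertible basis and invoke Proposition~\ref{prop:orientedweak}. The paper phrases the lower bound via Corollary~\ref{cor:weak} (weak coverings) rather than citing Proposition~\ref{prop:orientedweak} directly, but this is the same argument one level of abstraction removed; your explicit verification that the $|E|-r(\Mm)$ fundamental circuits of a basis are pairwise distinct is a detail the paper leaves implicit.
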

\begin{proof}
 We start by proving the first inequality. If $\Ss\subseteq\mathrm{C}$ has size $|\mathrm{C}|+1-\min(\lambda(\Mm), |E|-r(\Mm))$, then $\Cc\setminus\Ss$ has cardinality $\min(\lambda(\Mm), |E|-r(\Mm))-1$. Therefore removing $\Cc\setminus\Ss$ cannot disconnect $B_{\Cc}$, i.e., $B_{\Ss}$ is connected. Removing $\Cc\setminus\Ss$ leaves no basis uncovered, since each base is covered by exactly $|E|-r(\Mm)$ circuits. Thus, $\Ss$ is a connected base covering of $\Mm$ and the result follows by Theorem~\ref{thm:generalbounds}.
 
 For the second bound let $B$ be an invertible basis of $\Mm$. Now choosing $\Ss$ as all circuits except those covering $B$ yields a set of size $|\mathrm{C}|-|E|+r(\Mm)$ which is not a weak covering of $\Mm$. The result follows by Corollary~\ref{cor:weak}.
\end{proof}

Indeed, we believe that the minimum in the upper bound in Theorem~\ref{thm:generalboundssbar} is always attained by $|E|-r(\Mm)$. We will see this for uniform oriented matroids in Theorem~\ref{thm:s(n,r)}. 

\subsection{Oriented matroids with one reorientation class}\label{subsec:reorie}

{In this subsection we give sufficient and necessary conditions for a set of circuits to determine a matroid \emph{within a given reorientation class}. More precisely, suppose that the oriented matroids, coinciding on a given set of circuits, lie in the same reorientation class. We shall study conditions yielding a unique orientation in this class.
Although our main results in this subsection (Theorems \ref{thm:1reclass} and \ref{thm:1reclasssbar}) are stated in terms of matroids having a single reorientation class they yield results for general matroids (Corollaries \ref{cor:generallb} and \ref{cor:generallbbar}).}
\smallskip

Given a matroid $\Mm$ an \emph{(element) covering} is a set $\Ss$ of circuits covering the ground set $E$. 
An element covering $\Ss$ is said to be \emph{connected} if the \emph{(element) intersection graph} $I_{\Ss}$ of $\Ss$ is connected. Let {\boldmath $\ccc(\Mm)$} be the size of a smallest connected element covering of $\Mm$.

\begin{theorem}\label{thm:1reclass}
 If a connected matroid $\Mm$ has a single reorientation-class, then we have $$\widetilde{s}(\Mm)=s(\Mm)=\ccc(\Mm).$$
 Moreover, these equalities are attained by the same fixed element covering $\Ss$ of $\Mm$.  
\end{theorem}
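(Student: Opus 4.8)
The plan is to prove the two inequalities $\ccc(\Mm)\le\widetilde{s}(\Mm)$ and $s(\Mm)\le\ccc(\Mm)$ separately, since together with Observation~\ref{obs:three} they force $\widetilde{s}(\Mm)=s(\Mm)=\ccc(\Mm)$; and to keep track of a single fixed element covering $\Ss$ witnessing all of them. The key feature being exploited is that in a matroid with one reorientation class, knowing which orientation one is in amounts to knowing the relative signs of all elements (up to a global sign-reversal on a subset $A\subseteq E$), rather than any higher-dimensional chirotope data. A signed circuit $C$ with $\underline C$ in the covering, once its signing is prescribed, pins down for each pair of elements $e,f\in\underline C$ whether they ``agree'' or ``disagree'' in the sense relevant to sign-reversal equivalence; the intersection graph $I_{\Ss}$ being connected then propagates this pairwise information across all of $E$.

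First I would prove $s(\Mm)\le\ccc(\Mm)$. Let $\Ss$ be a smallest connected element covering, so $I_{\Ss}$ is connected and $\bigcup_{C\in\Ss}C=E$. Fix an orientation $\M$ of $\Mm$ and let $\S_{\M}$ be the corresponding orientation of $\Ss$. Suppose $\M'$ is another orientation of $\Mm$ containing every signed circuit of $\S_{\M}$. Since $\Mm$ has a single reorientation class, $\M'={}_{\bar A}\M$ for some $A\subseteq E$. The hypothesis that $\M$ and $\M'$ agree on the signed circuit $C\in\S_{\M}$ means ${}_{\bar A}C=C$, which forces $A\cap\underline C$ to be either $\emptyset$ or all of $\underline C$: on each circuit in the covering, $A$ is ``trivial''. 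Now I use connectivity of $I_{\Ss}$: if $C,C'\in\Ss$ share an element $x$, then $A$ contains $x$ iff $A\supseteq\underline C$ iff $A\supseteq\underline{C'}$; walking along a path in $I_{\Ss}$ shows $A\supseteq\underline C$ holds for all $C\in\Ss$ simultaneously or for none, and since the circuits cover $E$ this gives $A=E$ or $A=\emptyset$. In either case ${}_{\bar A}\M=\M$ (reversing all signs yields the same oriented matroid), so $\M'=\M$. Hence $\Ss$ determines every orientation of $\Mm$, i.e. $\Ss$ determines all orientations, giving $s(\Mm)\le|\Ss|=\ccc(\Mm)$. The same computation shows this $\Ss$ witnesses $\widetilde s(\Mm)\le\ccc(\Mm)$ as well.

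Next I would prove $\ccc(\Mm)\le\widetilde s(\Mm)$, which I expect to be the harder direction — it is the one requiring a genuine necessity argument rather than propagation. Fix an orientation $\M$ and let $\S$ be a set of signed circuits determining $\M$ with $|\S|=\widetilde s(\Mm)$ (for the minimizing orientation). I claim $\underline{\S}$ must be a connected element covering. For the covering property: if some element $e$ lies in no member of $\underline{\S}$, consider $A=\{e\}$ or, if $\{e\}$ is not a suitable reversal set, the sign-reversal ${}_{\bar{\{e\}}}\M$; since every circuit in $\S$ avoids $e$ it is unchanged, so ${}_{\bar{\{e\}}}\M$ is a second orientation (distinct from $\M$ because $e$ is not a loop, $\Mm$ being connected with $|E|\ge 2$) containing all of $\S$ — contradiction. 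For connectivity of $I_{\underline\S}$: if $I_{\underline\S}$ splits as a disjoint union with vertex classes covering disjoint element sets $E_1\sqcup E_2=E$ (each nonempty), take $A=E_1$; every $C\in\S$ has $\underline C\subseteq E_1$ or $\underline C\subseteq E_2$, so ${}_{\bar{E_1}}C$ equals $C$ in the first case and $-C=C$ (as an oriented matroid circuit up to the reversal) — more carefully, ${}_{\bar{E_1}}\M$ contains all of $\S$, and it differs from $\M$ since $E_1\neq\emptyset,E$. This again contradicts that $\S$ determines $\M$. Hence $\underline\S$ is a connected element covering, so $\ccc(\Mm)\le|\underline\S|\le|\S|=\widetilde s(\Mm)$.

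The main obstacle I anticipate is the bookkeeping around sign-reversal sets that are not ``clean'' on a circuit: the statement ``${}_{\bar A}C=C$ as signed sets implies $A\cap\underline C\in\{\emptyset,\underline C\}$'' is exactly right for a single circuit, but one must be careful that ``$\M$ and $\M'$ contain the same signed circuit $C$'' is the correct reading of the determination hypothesis (as opposed to, say, $C$ or $-C$), and that a global reversal $A=E$ genuinely gives back $\M$ — this uses that $\Mm$ is loop-free, which follows from connectivity. A secondary subtlety is making sure the $\widetilde s$ side is handled for the orientation realizing the minimum, and that the same fixed $\Ss$ works for both $s$ and $\widetilde s$ — but this falls out because the $s(\Mm)\le\ccc(\Mm)$ argument produces an $\Ss$ independent of the orientation, and Observation~\ref{obs:three} sandwiches $\widetilde s$ between $\ccc$ and $s$ once the two displayed inequalities are in hand.
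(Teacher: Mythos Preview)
Your proposal is correct and follows essentially the same approach as the paper: both prove $s(\Mm)\le\ccc(\Mm)$ by showing that a connected element covering forces any compatible sign-reversal set $A$ to be $\emptyset$ or $E$, and both prove $\ccc(\Mm)\le\widetilde s(\Mm)$ by exhibiting nontrivial reorientations (at an uncovered element, or on one component of a disconnected $I_{\underline\S}$) that preserve $\S$. Your treatment of the $\pm C$ ambiguity and of loop/coloop-freeness is a bit more explicit than the paper's, but the structure is identical.
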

\begin{proof}
We first show $\ccc(\Mm)\geq s(\Mm)$. Let $\Ss$ cover $E$ and $I_{\Ss}$ be connected. Suppose there were two orientations $\M$ and $\M'$ of $\Mm$ coinciding on $\Ss$. By the preconditions $\M$ and $\M'$ differ by reorienting a set $X\subset E$. 
 
 We reorient $X$ in $\M$, but since all orientations of circuits of $\Ss$ shall be maintained, every circuit $C\in \Ss$ intersecting $X$ has to be reoriented entirely, i.e, $C\subseteq X$. Therefore all neighbors of $C$ in $I_{\Ss}$ are also contained in $X$. Iterating this argument \emph{all} circuits in $\Ss$ have to be completely reoriented. Since $\Ss$ covers $E$ all elements have to be reoriented, i.e., $X=E$. Thus, $\M=\M'$.
\medskip
 
We now show $\widetilde{s}(\Mm)\geq\ccc(\Mm)$. If $\Ss$ does not cover some $e\in E$, then in any orientation $\M$ of $\Mm$ we can reorient $e$ independently of the rest, i.e., $\M$ and the reorientation of $\M$ at $e$ coincide on $\Ss$. 
 If $I_{\Ss}$ has two connected components corresponding to two sets of circuits $\Ss', \Ss''$, then in any orientation $\M$ of $\Mm$ we can reorient all elements covered by $\Ss'$. Since all signs of signed circuits in $\S'$ are reversed, the resulting orientation $\M'$ coincides with $\M$ on $\Ss'$ and thus on $\Ss$. Nevertheless, reorienting $\S'$ in particular changes the orientation of circuits containing an element covered by $\Ss'$ and one covered by $\Ss''$. Therefore $\M'\neq \M$,
 
 Hence, if $\Ss$ is not a covering or $I_{\Ss}$ is disconnected then \emph{no} orientation $\M$ of $\Mm$ is determined by $\S_{\M}$.  The result follows by Observation~\ref{obs:three}.
\end{proof}

The following result is an immediate consequence of Theorem~\ref{thm:1reclass} and gives an alternative lower bound for $s(\Mm)$ to the one presented in Corollary~\ref{cor:weak}.

\begin{corollary}\label{cor:generallb}
 For any connected orientable matroid $\Mm$ we have $\ccc(\Mm)\leq \widetilde{s}(\Mm)$.
\end{corollary}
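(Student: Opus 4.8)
The plan is to lift the second half of the proof of Theorem~\ref{thm:1reclass} — the part establishing $\widetilde{s}(\Mm)\geq\ccc(\Mm)$ — and simply observe that it never used the hypothesis that $\Mm$ has a single reorientation class, so it already yields the bound for every connected orientable $\Mm$. Concretely, I would fix an arbitrary orientation $\M$ of $\Mm$ together with a set $\Ss$ of circuits of size strictly less than $\ccc(\Mm)$, and show that the corresponding signed set $\S_{\M}$ does not determine $\M$; since $\M$ is arbitrary, this gives $\widetilde{s}(\Mm)\geq\ccc(\Mm)$, hence $\ccc(\Mm)\leq\widetilde{s}(\Mm)$, via Observation~\ref{obs:three}. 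By definition of $\ccc(\Mm)$, such an $\Ss$ either fails to cover $E$ or has disconnected intersection graph $I_{\Ss}$.

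In the first case I would pick $e\in E$ not covered by $\Ss$ and take $\M'={}_{\overline{\{e\}}}\M$: this is again an orientation of $\Mm$, it agrees with $\M$ on every circuit avoiding $e$ and hence on all of $\Ss$, yet connectivity of $\Mm$ forces $e$ into a circuit of size at least two whose orientation genuinely changes, so $\M'\neq\M$. In the second case I would let $\Ss'$ be the circuit set of one component of $I_{\Ss}$ and $X$ the set of elements it covers; since distinct components are element-disjoint, $X$ is a nonempty proper subset of $E$, and reorienting $X$ produces $\M'={}_{\bar X}\M$ which still agrees with $\M$ on $\Ss$ (the circuits in $\Ss'$ are flipped entirely, those in $\Ss\setminus\Ss'$ avoid $X$), while, again by connectivity of $\Mm$, some circuit of $\Mm$ meets both $X$ and $E\setminus X$ and is therefore reoriented nontrivially, so $\M'\neq\M$. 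Either way $\S_{\M}$ extends to a second orientation of $\Mm$, so it does not determine $\M$.

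The only point that needs real care, and the one I would double-check, is that $\M'\neq\M$ in both cases; this is precisely where connectivity of $\Mm$ enters — it guarantees a circuit of $\Mm$ that is split by the reorientation — and it is the sole hypothesis used, nothing about reorientation classes, which is exactly why the conclusion holds for all connected orientable matroids and not merely for those with one reorientation class. Everything else is bookkeeping: reorientations of $\Mm$ are again orientations of $\Mm$, and $\S_{\M}$ being closed under negation makes ``agrees up to a global sign on each circuit of $\Ss$'' the correct notion of containment.
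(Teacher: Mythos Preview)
Your proposal is correct and is exactly the approach the paper intends: the paper states the corollary as ``an immediate consequence of Theorem~\ref{thm:1reclass}'', and what you have done is spell out that the inequality $\widetilde{s}(\Mm)\geq\ccc(\Mm)$ proved there used only connectivity of $\Mm$ (to ensure the reoriented matroid is genuinely different) and never the single-reorientation-class hypothesis. The one cosmetic point is that your appeal to Observation~\ref{obs:three} is unnecessary here --- that observation is invoked in the proof of Theorem~\ref{thm:1reclass} to close the loop $\widetilde{s}\leq s\leq\ccc\leq\widetilde{s}$ into equalities, but for the corollary you have already obtained $\ccc(\Mm)\leq\widetilde{s}(\Mm)$ directly.
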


Theorem~\ref{thm:1reclass} allows us to say something about $\overline{s}$ for matroids with only one reorientation class. In order to prove the next result, we need  the following definition. For a matroid $\Mm$ with set of circuits $\Cc$, denote by $\kappa(I_{\Cc})$ the vertex connectivity of the graph $I_{\Cc}$. If $\Ss_e$ is the set of circuits containing a given element $e$ of the ground set $E$, we set  $\Delta(\Mm):=\max\{|\Ss_e| | e\in E\}$.

\begin{theorem}\label{thm:1reclasssbar}
 If $\Mm$ is a connected matroid with a single reorientation class, then
 $$ \overline{s}(\Mm)=|\Cc|+1-\min(\Delta(\Mm), \kappa(I_{\Cc})).
$$
\end{theorem}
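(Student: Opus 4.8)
The plan is to prove the two inequalities $\overline{s}(\Mm)\le |\Cc|+1-\min(\Delta(\Mm),\kappa(I_{\Cc}))$ and $\overline{s}(\Mm)\ge |\Cc|+1-\min(\Delta(\Mm),\kappa(I_{\Cc}))$ separately, reusing the structural equivalence from Theorem~\ref{thm:1reclass}: since $\Mm$ has a single reorientation class, for any $\Ss\subseteq\Cc$ the orientation $\S_{\M}$ determines $\M$ for every orientation $\M$ if and only if $\Ss$ is a connected element covering (that is, $\Ss$ covers $E$ and $I_{\Ss}$ is connected). So $\overline{s}(\Mm)$ is exactly the smallest $k$ such that \emph{every} $k$-subset of $\Cc$ is a connected element covering; equivalently, $|\Cc|-\overline{s}(\Mm)$ is the largest size of a set $\T\subseteq\Cc$ whose complement $\Cc\setminus\T$ \emph{fails} to be a connected element covering, so $\overline{s}(\Mm)=|\Cc|+1-m$ where $m$ is the minimum size of a subset $\Ss_0\subseteq\Cc$ that is either not a covering of $E$ or has $I_{\Ss_0}$ disconnected (removing any one more circuit from its complement preserves the defect). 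This reduces the theorem to the purely combinatorial claim $m=\min(\Delta(\Mm),\kappa(I_{\Cc}))$.

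For the upper bound on $\overline{s}$ (equivalently the lower bound $m\ge\min(\Delta,\kappa)$), I would argue that if $\Ss_0\subseteq\Cc$ is not a connected element covering then $|\Ss_0|\ge\min(\Delta(\Mm),\kappa(I_{\Cc}))$. If $\Ss_0$ covers $E$ but $I_{\Ss_0}$ is disconnected, write $\Cc=\Ss_0\sqcup(\Cc\setminus\Ss_0)$; since $I_{\Cc}$ is connected and $I_{\Ss_0}$ has at least two components, removing the vertex set $\Cc\setminus\Ss_0$ from $I_{\Cc}$ disconnects it, so $|\Cc\setminus\Ss_0|\ge\kappa(I_{\Cc})$ — wait, this bounds the complement, not $\Ss_0$ itself, so I must instead look at it from the other side. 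The cleaner route: $\overline s(\Mm)$ is the threshold above which every subset is good, and the tight case is the \emph{maximum} bad complement. Let me restructure: set $M_1:=|\Cc|-\Delta(\Mm)$, realized by $\T_1:=\Cc\setminus\Ss_e$ for the element $e$ achieving $\Delta$ — its complement $\Cc\setminus\T_1=\Ss_e$ omits nothing, but wait $\Ss_e$ covers $e$; the point is that $\T_1$'s complement needs to fail. Rather, the bad sets are small sets $\Ss_0$; the relevant quantity is that $\overline s$ equals $|\Cc|+1$ minus (the largest $m$ such that there is a bad $\Ss_0$ with $|\Ss_0|=|\Cc|-(\text{something})$)... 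I will be careful in the writeup: the correct reading is that $\overline s(\Mm)-1$ is the largest size of a set $\T\subseteq\Cc$ such that some $(|\T|)$-subset... — I'll simply say $\overline s(\Mm)$ is the least $k$ so that no $k$-subset is bad, hence $\overline s(\Mm)-1$ is the largest size of a bad subset $\Ss_0$, and I must show the largest bad subset has size exactly $|\Cc|-\min(\Delta(\Mm),\kappa(I_{\Cc}))$.

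For the \textbf{lower bound} direction (exhibiting a large bad $\Ss_0$, hence $\overline s(\Mm)\ge|\Cc|+1-\min(\Delta,\kappa)$): if $\Delta(\Mm)\le\kappa(I_{\Cc})$, take $\Ss_0:=\Cc\setminus\Ss_e$ where $|\Ss_e|=\Delta(\Mm)$; this set does not cover $e$, hence is not an element covering, and $|\Ss_0|=|\Cc|-\Delta(\Mm)$. If $\kappa(I_{\Cc})<\Delta(\Mm)$, take a minimum vertex cut $\mathcal{K}$ of $I_{\Cc}$ with $|\mathcal{K}|=\kappa(I_{\Cc})$ and set $\Ss_0:=\Cc\setminus\mathcal{K}$; then $I_{\Ss_0}=I_{\Cc}-\mathcal{K}$ is disconnected, so $\Ss_0$ is not a connected covering, and $|\Ss_0|=|\Cc|-\kappa(I_{\Cc})$. (One must also check that this $\Ss_0$ still covers $E$ — which is automatic here only if needed; actually disconnectedness already makes it bad regardless — but note we cannot remove all circuits through an element, which is fine since $\mathcal{K}\ne\Ss_e$ as $|\mathcal K|<\Delta$.) For the \textbf{upper bound} direction ($\overline s(\Mm)\le|\Cc|+1-\min(\Delta,\kappa)$): we must show every $\Ss_0$ with $|\Ss_0|\le|\Cc|-\min(\Delta,\kappa)$ — hmm, that's the wrong monotonicity too; rather every $\Ss\subseteq\Cc$ with $|\Ss|\ge|\Cc|+1-\min(\Delta,\kappa)$ is a connected element covering. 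Equivalently: if $\Ss$ is \emph{not} a connected element covering then $|\Ss|\le|\Cc|-\min(\Delta,\kappa)$, i.e., $|\Cc\setminus\Ss|\ge\min(\Delta,\kappa)$. If $\Ss$ misses an element $e'$, then $\Cc\setminus\Ss\supseteq\Ss_{e'}$, so $|\Cc\setminus\Ss|\ge|\Ss_{e'}|\ge$ ... no, $\le\Delta$; but we want a \emph{lower} bound, and $|\Ss_{e'}|$ could be small. This is the genuine subtlety: if $e'$ lies in few circuits, the complement can be small yet the set still misses $e'$. So actually the bad sets with small complement are governed by the \emph{minimum} over elements of $|\Ss_{e'}|$, not $\Delta$ — so I must re-examine whether the statement really is $\Delta$; re-reading, yes it is $\Delta(\Mm)=\max_e|\Ss_e|$, which means the intended reading must be the one with the large \emph{bad complement} $\T=\Cc\setminus\Ss_0$ and the right monotonicity, and the threshold is where bad sets stop existing \emph{as complements of small good-failing sets}. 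I will therefore present it cleanly as: $\overline s(\Mm)=|\Cc|-M+1$ where $M=\max\{|\Ss_0|:\Ss_0\subseteq\Cc,\ \Ss_0\text{ not a connected element covering}\}$, then show $M=|\Cc|-\min(\Delta(\Mm),\kappa(I_{\Cc}))$ by (i) the two constructions above for $M\ge|\Cc|-\min(\Delta,\kappa)$, and (ii) for $M\le|\Cc|-\min(\Delta,\kappa)$: given a bad $\Ss_0$ of maximum size, if $I_{\Ss_0}$ is disconnected then $\Cc\setminus\Ss_0$ is a vertex cut of $I_{\Cc}$ (as $I_{\Cc}$ is connected but deleting $\Cc\setminus\Ss_0$ disconnects it) so $|\Cc\setminus\Ss_0|\ge\kappa(I_{\Cc})$; if $\Ss_0$ is connected in $I_{\Ss}$ but misses some element $e'$, then by maximality $\Ss_0=\Cc\setminus\Ss_{e'}$ is impossible to enlarge without covering $e'$ or disconnecting, and one shows $|\Cc\setminus\Ss_0|\ge|\Ss_{e'}|$, and among all choices making this bad, the extremal one picks $e'$ with $|\Ss_{e'}|$ as large as possible, namely $=\Delta(\Mm)$ — \emph{this last monotonicity check is the main obstacle} and hinges on the fact that a maximum bad set is as large as possible, so it discards as few circuits as possible, and discarding exactly $\Ss_{e'}$ for the \emph{largest} such star is forced only when the disconnection alternative is not cheaper; I expect to handle it by taking $e'$ to be any uncovered element of a maximum bad $\Ss_0$ and noting $\Ss_0\subseteq\Cc\setminus\Ss_{e'}$, whence $|\Ss_0|\le|\Cc|-|\Ss_{e'}|\le|\Cc|-(\text{the max over uncovered elements})$, and then observing that one can always realize an uncovered element with star size $\Delta$ unless that is dominated by the $\kappa$ bound, giving the claimed minimum.
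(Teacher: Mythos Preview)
Your overall strategy is exactly the paper's: use Theorem~\ref{thm:1reclass} to identify ``determining'' with ``being a connected element covering,'' so that $\overline{s}(\Mm)-1$ equals the maximum size of a \emph{bad} subset $\Ss_0\subseteq\Cc$ (one that misses an element of $E$ or has $I_{\Ss_0}$ disconnected), and then analyze the two failure modes separately. Your constructions for the inequality $\overline{s}(\Mm)\ge|\Cc|+1-\min(\Delta(\Mm),\kappa(I_{\Cc}))$ are correct and coincide with the paper's.

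The difficulty you flag in the other direction, however, is not something you can argue around: it is a genuine defect in the statement as written. If some $e'\in E$ satisfies $|\Ss_{e'}|<\min(\Delta(\Mm),\kappa(I_{\Cc}))$, then $\Ss_0:=\Cc\setminus\Ss_{e'}$ is a bad set of size $|\Cc|-|\Ss_{e'}|>|\Cc|-\min(\Delta,\kappa)$, which forces $\overline{s}(\Mm)>|\Cc|+1-\min(\Delta,\kappa)$. A concrete regular (hence single-reorientation-class) instance is the cycle matroid of the $5$-cycle with two incident chords (edges $12,23,34,45,51,13,14$): there $|\Cc|=6$, $\kappa(I_{\Cc})=4$, $\Delta(\Mm)=4$ (realized by edge $34$), but edge $12$ lies in only $3$ circuits, and one checks directly that $\overline{s}=4\neq 3=|\Cc|+1-\min(\Delta,\kappa)$. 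The paper's own proof makes the same slip in the line ``For any set $\Ss$ of circuits of $\Mm$ with $|\Ss|>|\Cc|-\Delta(\Mm)$, we have that $\Ss$ is an element covering.'' The correct quantity is $\delta(\Mm):=\min_{e\in E}|\Ss_e|$ rather than $\Delta(\Mm)=\max_{e\in E}|\Ss_e|$; with that substitution both your argument and the paper's go through verbatim (if $\Ss_0$ misses $e'$ then $|\Cc\setminus\Ss_0|\ge|\Ss_{e'}|\ge\delta$, and the extremal bad sets are $\Cc\setminus\Ss_{e'}$ for $e'$ minimizing $|\Ss_{e'}|$, respectively $\Cc\setminus\mathcal{K}$ for a minimum vertex cut). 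Your closing attempt to force the uncovered element to realize $\Delta$ is therefore aiming in the wrong direction and should be discarded.
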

\begin{proof}
For any set $\Ss$ of circuits of $\Mm$ with $|\Ss|>|\Cc|-\Delta(\Mm)$, we have that $\Ss$ is an element covering. Similarly, for any set $\Ss$ of circuits of $\Mm$ with $|\Ss|>|\Cc|-\kappa(I_{\Cc})$, the induced subgraph $I_{\Ss}$ of $I_{\Cc}$ is connected, otherwise there would be set $X\subset V(I_{\Ss})$ with $I_{\Ss}-X$ not connected and $|X|<\kappa(I_{\Cc})$ which is impossible. Thus, $\Ss$ is a connected element covering and with Theorem~\ref{thm:1reclass} we have $\overline{s}(\Mm)\le |\Cc|+1-\min(\Delta(\Mm), \kappa(I_{\Cc}))$.

\medskip

On the other hand, we note that there exists a set $\Ss$ of circuits of $\Mm$ with $|\Ss|\le |\Cc|-\Delta(\Mm)$ such that $\Ss$ is not an element covering. Similarly,  there exists a set $\Ss$ of circuits of $\Mm$ with $|\Ss|\le |\Cc|-\kappa(I_{\Cc})$ such that the induced subgraph $I_{\Ss}$ of $I_{\Cc}$ is not connected. Hence with Theorem~\ref{thm:1reclass} $\overline{s}(\Mm)\geq|\Cc|+1-\min(\Delta(\Mm), \kappa(I_{\Cc}))$.

\medskip

Together we get $\overline{s}(\Mm)=|\Cc|+1-\min(\Delta(\Mm), \kappa(I_{\Cc}))$.
\end{proof}

The following result is an immediate consequence of Theorem~\ref{thm:1reclasssbar} and complements the upper bound in Theorem~\ref{thm:generalboundssbar}.

\begin{corollary}\label{cor:generallbbar}
 For any connected orientable matroid $\Mm$ we have $|\Cc|+1-\min(\Delta(\Mm), \kappa(I_{\Cc}))\leq \overline{s}(\Mm)$.
\end{corollary}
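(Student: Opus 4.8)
The plan is to unwind the lower-bound half of the proof of Theorem~\ref{thm:1reclasssbar} and to observe that this half never invokes the single-reorientation-class hypothesis: it uses only that $\Mm$ is connected. By definition of $\overline{s}$, to prove $\overline{s}(\Mm)\geq |\Cc|+1-\min(\Delta(\Mm),\kappa(I_{\Cc}))$ it suffices to produce a set $\Ss\subseteq\Cc$ of size $|\Cc|-\min(\Delta(\Mm),\kappa(I_{\Cc}))$ that does not determine all orientations of $\Mm$. I would distinguish two cases according to which of $\Delta(\Mm)$ and $\kappa(I_{\Cc})$ realises the minimum.

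Suppose first $\Delta(\Mm)\leq\kappa(I_{\Cc})$. Choose $e\in E$ with $|\Ss_e|=\Delta(\Mm)$ and set $\Ss:=\Cc\setminus\Ss_e$, a set of the right cardinality none of whose circuits contains $e$. For an arbitrary orientation $\M$ of $\Mm$, reversing signs on $\{e\}$ leaves every signed circuit avoiding $e$ unchanged, so ${}_{\overline{\{e\}}}\M$ still contains $\S_{\M}$; moreover ${}_{\overline{\{e\}}}\M\neq\M$, since connectivity of $\Mm$ (on at least two elements) forces $e$ into a circuit whose support has size at least $2$, and the $\{e\}$-reorientation of that signed circuit is neither itself nor its opposite, hence not a circuit of $\M$. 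Thus $\S_{\M}$ does not determine $\M$, and so $\Ss$ does not determine all orientations of $\Mm$. Suppose now $\kappa(I_{\Cc})<\Delta(\Mm)$. Take a minimum vertex cut $X$ of $I_{\Cc}$ and set $\Ss:=\Cc\setminus X$, so that $I_{\Ss}=I_{\Cc}-X$ is disconnected; let $\Ss'$ be one of its components and $Y:=\bigcup_{C\in\Ss'}\underline{C}$. Then $Y$ is nonempty (circuits are nonempty) and proper (any circuit of $\Ss$ lying in a component of $I_{\Ss}$ other than $\Ss'$ shares no element with $Y$, hence $Y\neq E$). Reversing signs on $Y$ sends every signed circuit supported inside $Y$ to its opposite and fixes every signed circuit supported in $E\setminus Y$, so ${}_{\overline{Y}}\M$ still contains $\S_{\M}$; and ${}_{\overline{Y}}\M\neq\M$ because connectivity of $\Mm$ supplies a circuit meeting both $Y$ and $E\setminus Y$, whose $Y$-reorientation is again a signing of its support distinct from both of its orientations in $\M$. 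Hence $\S_{\M}$ does not determine $\M$ in this case either.

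In both cases the exhibited $\Ss$ has size $|\Cc|-\min(\Delta(\Mm),\kappa(I_{\Cc}))$ and fails to determine all orientations of $\Mm$, which yields the claimed inequality. The one point requiring care is verifying that ${}_{\overline{\{e\}}}\M$ (respectively ${}_{\overline{Y}}\M$) is genuinely a \emph{different} oriented matroid from $\M$; this is exactly where connectivity of $\Mm$ enters, guaranteeing a circuit that straddles the boundary of the reoriented set and is therefore mapped to a signing of its support that is neither of the two orientations present in $\M$. The remaining verifications — that the reorientation fixes $\S_{\M}$ — are routine bookkeeping about which signed circuits survive a sign reversal, paralleling the proof of Theorem~\ref{thm:1reclass}.
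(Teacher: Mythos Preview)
Your proof is correct and follows essentially the same approach as the paper: the paper derives the corollary by noting that the lower-bound half of the proof of Theorem~\ref{thm:1reclasssbar} (which in turn invokes the ``$\widetilde{s}(\Mm)\geq\ccc(\Mm)$'' half of Theorem~\ref{thm:1reclass}) never uses the single-reorientation-class hypothesis, and you have simply unpacked that argument explicitly, including the reorientation constructions and the use of connectivity to certify that the reoriented matroid is genuinely different.
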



\section{Uniform Oriented Matroids}\label{sec:uniform}

Let us quickly describe the connection of $s(U_{r,n})$ with connected coverings.
\smallskip

Let $n,k,r$ be positive integers such that $n\ge k\ge r\ge 1$. An \textit{$(n,k,r)$-covering} is a family $\mathrm{B}$ of $k$-subsets of $\{1,\ldots,n\}$, called \textit{blocks}, such that each $r$-subset of $\{1,\ldots,n\}$ is contained in at least one of the blocks. The number of blocks is the covering's \textit{size}. The minimum size of such a covering is called the \textit{covering number} and is denoted by {\boldmath $\C(n,k,r)$}. Given an $(n,k,r)$-covering $\mathrm{B}$, its graph $G(\mathrm{B})$ has $\mathrm{B}$ as vertices and two vertices are joined if they have one $r$-subset in common. We say that an $(n,k,r)$-covering is \textit{connected} if the graph $G(\mathrm{B})$ is connected. The minimum size of a connected $(n,k,r)$-covering is called the \textit{connected covering number} and is denoted by {\boldmath $\CC(n,k,r)$}.

\begin{theorem}[\cite{For-98,For-02}]\label{prop:motiv}
$$\C(n,r+1,r)\leq s(U_{r,n})\leq \CC(n,r+1,r).$$
\end{theorem}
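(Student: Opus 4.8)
The plan is to connect circuits of $U_{r,n}$ directly with $(n,r+1,r)$-coverings. The first observation is that the circuits of the uniform matroid $U_{r,n}$ are exactly the $(r+1)$-subsets of the ground set $\{1,\dots,n\}$, and the bases are exactly the $r$-subsets. Under this identification, a circuit $C$ (an $(r+1)$-set) covers a basis $B$ (an $r$-set) in the matroid sense precisely when $B\subset C$, i.e., when $B$ is one of the $r$-subsets contained in the block $C$. Hence a \emph{base covering} of $U_{r,n}$ is literally the same thing as an $(n,r+1,r)$-covering, and the size of a smallest base covering equals $\C(n,r+1,r)$.

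For the lower bound $\C(n,r+1,r)\le s(U_{r,n})$, I would argue that any set $\Ss$ of circuits determining all orientations must in particular be a base covering: this is exactly Proposition~\ref{prop:orientedweak} (via Corollary~\ref{cor:weak}), once one checks that in a uniform oriented matroid \emph{every} basis is invertible in some orientation — indeed invertible bases of uniform oriented matroids are the mutations, and for a generic (e.g.\ realizable by points in convex position) orientation every basis is a mutation; alternatively one simply notes that if $\Ss$ fails to cover some $r$-set $B$, then by (CH2) the orientation of the circuits in $\Ss$ is insensitive to the sign of $B$, so $\Ss$ cannot determine the orientation. Either way, $\Ss$ is a base covering, hence $|\Ss|\ge\C(n,r+1,r)$, giving $s(U_{r,n})\ge\C(n,r+1,r)$.

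For the upper bound $s(U_{r,n})\le\CC(n,r+1,r)$, I would invoke Theorem~\ref{thm:generalbounds}, which gives $s(\Mm)\le\CC(\Mm)$ for any connected orientable matroid, and then show that for $\Mm=U_{r,n}$ the matroid parameter $\CC(U_{r,n})$ (smallest \emph{connected} base covering in the sense of the graph $B_\Ss$) coincides with the design-theoretic $\CC(n,r+1,r)$ (smallest connected covering in the sense of the block graph $G(\mathrm B)$). The key point is that the two graphs agree under the identification above: two bases $B,B'$ with $|B\Delta B'|=2$ are joined in $B_\Ss$ iff some $C\in\Ss$ has $C\supseteq B\cup B'$, i.e., $C$ is the unique $(r+1)$-set $B\cup B'$; meanwhile in $G(\mathrm B)$ two blocks $C,C'\in\Ss$ are joined iff $|C\cap C'|=r$. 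One checks that $B_\Ss$ is connected iff $G(\Ss)$ is connected — each edge of $G(\Ss)$ witnessed by blocks $C,C'$ sharing an $r$-set $B$ corresponds to $C,C'$ both covering $B$, and conversely a path in $B_\Ss$ between bases lifts to a path in $G(\Ss)$ — so $\CC(U_{r,n})=\CC(n,r+1,r)$, and the upper bound follows.

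The step I expect to be most delicate is the equivalence of connectivity of $B_\Ss$ and of $G(\Ss)$: one must be careful that a base covering need not use all circuits through a given base, so the correspondence between edges of the two graphs is not completely symmetric, and isolated vertices (bases covered by a block that shares no $r$-set with any other block in $\Ss$) have to be handled. A clean way around this is to phrase both conditions as: the hypergraph whose vertices are the blocks of $\Ss$ and whose hyperedges are the fibers $\{C\in\Ss: B\subset C\}$ over $r$-sets $B$ is connected — this simultaneously forces $\Ss$ to be a covering and its block graph to be connected, and it is visibly the condition making $B_\Ss$ connected. Everything else is bookkeeping with the identification of circuits and bases of $U_{r,n}$ with $(r+1)$- and $r$-subsets.
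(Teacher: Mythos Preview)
Your overall architecture matches the paper's: identify base coverings of $U_{r,n}$ with $(n,r+1,r)$-coverings, derive the lower bound via Corollary~\ref{cor:weak} by showing $\WC(U_{r,n})=\C(n,r+1,r)$, and the upper bound via Theorem~\ref{thm:generalbounds} by showing $\CC(U_{r,n})=\CC(n,r+1,r)$. The upper-bound half, including your discussion of the equivalence between connectivity of $B_\Ss$ and of $G(\Ss)$, is essentially the paper's argument (the paper phrases it as: $C$ covers both $B$ and $B'$ iff $C=B\cup B'$, hence paths in one graph correspond to paths in the other).

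There is, however, a genuine gap in your lower-bound step, namely the claim that every basis of $U_{r,n}$ is invertible in some orientation. Neither of your two justifications works. The assertion that ``for a generic (e.g.\ realizable by points in convex position) orientation every basis is a mutation'' is false: already for $U_{2,n}$ the dual arrangement is $2n$ points on $S^1$, and only the $n$ bases corresponding to consecutive edges are mutations, not all $\binom{n}{2}$. No single orientation of $U_{r,n}$ has \emph{all} bases invertible in general. Your alternative (``by (CH2) the orientation of the circuits in $\Ss$ is insensitive to the sign of $B$, so $\Ss$ cannot determine the orientation'') is circular: insensitivity only yields a second orientation if flipping $\chi(B)$ alone produces a valid chirotope, which is precisely the definition of $B$ being invertible. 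What is actually needed --- and what the paper supplies --- is, for each fixed base $B$, an explicit orientation of $U_{r,n}$ in which $B$ is a mutation: one builds an affine hyperplane arrangement in which the elements of the complementary cobase bound a simplex and the remaining hyperplanes miss that simplex. With this construction in hand your argument goes through; without it the lower bound is not established.
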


In~\cite{For-02} a \emph{disconnected} covering determining all orientations of a uniform matroid is presented. However, its size is larger than the size of a smallest connected covering. 
\smallskip

We quickly recall some facts about oriented matroids needed in the rest of this section. Let $\M$ be a uniform oriented matroid and let $\mathcal{A}_{\M^*}$ be the pseudosphere arrangement representing the dual oriented matroid $\M^*$ of $\M$. The signed circuits $\mathcal{C}$ of $\M$ correspond to the cocircuits of $\M^*$ which are represented by the set of vertices ($0$-dimensional cells) of the arrangement $\mathcal{A}_{\M^*}$. A pair of oppositely signed circuits of $\M$ corresponds to an $S^0$ in $\mathcal{A}_{\M^*}$. Let $R_{B^*}$ be a full-dimensional simplicial cell in $\mathcal{A}_{\M^*}$ where $B^*$ is a base of $\M^*$ whose elements correspond to the bounding pseudospheres of $R_{B^*}$. We notice that any of the circuits corresponding to the vertices of $R_{B^*}$ in 
$\mathcal{A}_{\M^*}$ are circuits in $\M$ containing the base $B=E\setminus B^*$ because $\Mm$ is uniform. To see the latter, notice that the underlying set of each such circuit is formed by the pseudospheres not touching the corresponding vertex and so all the elements of $B$ will be included in such circuits. Finally, it is known that the mutations of $\M$ correspond to those bases corresponding to simplicial cells \cite{Rou-Sturm88}. Thus, in this section using Proposition~\ref{prop:orientedweak} we will encounter the problem of finding circuits touching all simplicial cells in an arrangement in order to obtain a weak covering of $\M$.  
\medskip

Let us give an alternative proof of Theorem \ref{prop:motiv} in a more general framework.

\begin{proof}[Proof of Theorem \ref{prop:motiv}]
We shall show that $\C(n,r+1,r)=\WC(U_{r,n})$ and $\CC(n,r+1,r)=\CC(U_{r,n})$. The claimed inequalities then follow by Theorem~\ref{thm:generalbounds} and Corollary~\ref{cor:weak}. 

First, note that the fundamental circuits of a base $B$ of $U_{r,n}$ are precisely the $(r+1)$-element sets containing $B$. Therefore the notions of $(n,r+1,r)$-covering and base covering of $U_{r,n}$ are the same.

For the first equality, it is enough to observe that for any base $B$ of $U_{r,n}$, there is an orientation with $B$ being invertible, i.e., a mutation, and so the result will follow by Proposition~\ref{prop:orientedweak}. 
So, let us take an $(n-r)$-simplex $R$ in $\mathbb{R}^{n-r}$. Define an affine hyperplane arrangement $\mathcal {A}_B$ consisting of the bounding hyperplanes of $R$ and $r$ further hyperplanes not intersecting $R$. We can label the bounding hyperplanes of $R$ with the elements of $B$. Since $R$ is a simplicial region of the arrangement $\mathcal {A}_B$, $B$ will be a mutation in any orientation of the hyperplanes of $\mathcal {A}_B$.

\smallskip

For the second equality we have to show that for a base covering $\Ss$ of $U_{r,n}$ we have that $G_{\mathrm{\Ss}}$ is connected if and only if $B_{\Ss}$ is connected. The crucial observation is that in $U_{r,n}$ a circuit $C$ covers $B$ and $B'$ if and only if $C=B\cup B'$. Therefore, there is a path from $C$ to $C'$ in $G_{\mathrm{\Ss}}$ if and only if there is a path from $B$ to $B'$ in $B_{\Ss}$ for all $B, B'$ covered by $C, C'$, respectively. Since $\Ss$ is a base covering, we obtain the result.

\end{proof}

%
\smallskip

Notice that by Observation~\ref{obs:three} we have $\widetilde{s}(\Mm)\le s(\Mm)$ and that Theorem~\ref{thm:1reclass} shows that both parameters are equal if the matroid has a single reorientation-class. It turns out that the inequality is strict for infinitely many matroids. Indeed, by~Theorem \ref{prop:motiv} and the fact that $\C(n,n-1,n-2)=\CC(n,n-1,n-2)=n-1$~\cite{Cha-13} we have that $s(U_{n-2,n})=n-1$ for every $n\ge 3$. {On the other hand, the following result shows that $\widetilde{s}(U_{n-2,n})$ is different from $s(U_{n-2,n})$ in general.}

\begin{theorem}\label{Th:unif} Let $n\ge 3$ be an integer. Then, 
$\widetilde{s}(U_{n-2,n})=\lceil\frac{n}{2}\rceil$.
\end{theorem}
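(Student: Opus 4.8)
The plan is to prove the two inequalities $\widetilde{s}(U_{n-2,n})\ge\lceil n/2\rceil$ and $\widetilde{s}(U_{n-2,n})\le\lceil n/2\rceil$ separately. Throughout, write $E=\{1,\dots,n\}$ and $C_m:=E\setminus\{m\}$ for the circuits of $U_{n-2,n}$, so that its bases are the sets $E\setminus\{i,j\}$, whose only fundamental circuits are $C(E\setminus\{i,j\},i)=C_j$ and $C(E\setminus\{i,j\},j)=C_i$; hence $C_m$ covers $E\setminus\{i,j\}$ precisely when $m\in\{i,j\}$.

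For the lower bound I would fix an arbitrary orientation $\M$ and use that $\underline{\M^*}=U_{2,n}$ is realizable, say by $n$ points on a circle appearing in cyclic order $\rho(1),\dots,\rho(n)$. Since $\mathcal{A}_{\M^*}$ has rank $2$, every chamber is a simplicial cell, bounded by a cyclically consecutive pair of elements, so by the discussion preceding Theorem~\ref{prop:motiv} the invertible bases of $\M$ are exactly the sets $E\setminus\{\rho(k),\rho(k+1)\}$ for $k\in\mathbb{Z}/n$. By Proposition~\ref{prop:orientedweak}, any $\S$ determining $\M$ covers all of these, so $\{m:C_m\in\underline{\S}\}$ is a vertex cover of the $n$-cycle on $E$ with edge set $\{\{\rho(k),\rho(k+1)\}:k\in\mathbb{Z}/n\}$; since every vertex cover of an $n$-cycle has at least $\lceil n/2\rceil$ vertices, $|\S|\ge\lceil n/2\rceil$. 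As this holds for every orientation, $\widetilde{s}(U_{n-2,n})\ge\lceil n/2\rceil$.

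For the upper bound, note that determining sets are preserved, with the same cardinality, under relabeling $E$ and under reorientation ($\S$ determines $\M$ if and only if $\{_{\bar A}C\mid C\in\S\}$ determines $_{\bar A}\M$). Moreover every orientation of $U_{n-2,n}$ becomes, after a relabeling and a reorientation, a fixed \emph{standard} one $\M$: realize $\underline{\M^*}=U_{2,n}$ by points on a circle, negate some of them so that all lie in an open half-plane, and relabel them into convex order $1,\dots,n$; this is reflected on $U_{n-2,n}$ as the orientation in which, up to a global sign, $C_m$ has sign $\operatorname{sign}(k-m)$ at each element $k\ne m$. So it suffices to exhibit a determining set of size $\lceil n/2\rceil$ for this $\M$. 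Let $V_0$ be the set of odd numbers in $\{1,\dots,n-1\}$ when $n$ is even, and the set of odd numbers in $\{1,\dots,n\}$ when $n$ is odd; then $|V_0|=\lceil n/2\rceil$, $V_0$ is a vertex cover of the mutation cycle $1-2-\cdots-n-1$, and I claim $\S_0:=\{C_m:m\in V_0\}$ determines $\M$. The key observation is that for $1\le i\le n-2$ the signed circuits $C_i$ and $C_{i+2}$ agree on $E\setminus\{i,i+1,i+2\}$ and have opposite signs at $i+1$, so in any orientation $\M'$ containing $\S_0$ the signed circuit elimination of $i+1$ between $C_i$ and $C_{i+2}$ produces a signed circuit with support $E\setminus\{i+1\}$ all of whose signs are forced by the elimination, namely those of $C_{i+1}$. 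Iterating over the consecutive elements of $V_0$ (which differ by $2$) shows that $\M'$ contains $C_2,C_4,\dots$, hence all of $C_1,\dots,C_{n-1}$, together with $C_n$ if $n$ is odd. If $n$ is odd this is the entire circuit set of $\M$, so $\M'=\M$; if $n$ is even then $\{C_1,\dots,C_{n-1}\}=\S_n$ is the set of signed circuits of $\M$ through $n$, so again $\M'=\M$ by the theorem of Las Vergnas and Hamidoune, (\ref{LVH1}). Either way $\S_0$ determines $\M$, which would give $\widetilde{s}(U_{n-2,n})\le\lceil n/2\rceil$ and hence the claimed equality; for $n=3$ this agrees with $\widetilde{s}(U_{1,3})=2$ obtained from Theorem~\ref{thm:1reclass}.

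The hard part is the upper bound, in two respects. First, one must check that the elimination step is \emph{clean}, i.e.\ that $C_i$ and $C_{i+2}$ disagree in exactly one coordinate, since otherwise the eliminated circuit would only be constrained and not determined; this uses the convex position of the dual points and genuinely fails for a non-convex realization, where two circuits at cyclic distance $2$ can disagree in two coordinates, so the reduction to the standard orientation is essential. Second, the $\pm$-ambiguity of signed circuits must be tracked carefully, working throughout with the representatives that match the standard orientation, so that the hypothesis that $\M'$ contains $C_i$ and $C_{i+2}$ legitimately lets one apply the elimination axiom of $\M'$ to those specific representatives and conclude that the recovered circuit is $C_{i+1}$ itself rather than $-C_{i+1}$ or something weaker.
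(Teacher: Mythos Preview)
Your proof is correct. The lower bound is essentially the paper's argument, recast pleasantly as a vertex--cover statement for the $n$-cycle of cyclically adjacent pairs; the paper instead picks one uncovered edge and swaps the two incident $S^0$'s, which is the same invertible-base obstruction you invoke via Proposition~\ref{prop:orientedweak}. (One tiny wording issue: you write ``$\underline{\M^*}=U_{2,n}$ is realizable'', but what you actually use is that $\M^*$ itself is a rank-$2$ oriented matroid and hence has the obvious $S^1$ representation; this is standard, but the sentence as written only asserts realizability of the underlying matroid.)

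Your upper bound, however, genuinely differs from the paper's. The paper works directly with an arbitrary orientation: in the dual $S^1$ picture it chooses alternating vertices so that $\S\cup(-\S)$ covers every arc, and then argues topologically that the relative position of each remaining point to the points of $\S\cup(-\S)$ is already pinned down by the signs in $\S$, so the whole arrangement is determined. You instead first normalize to the alternating (convex-position) orientation via relabeling and reorientation, and then recover the missing circuits algebraically: for consecutive odd indices the signed circuits $C_i$ and $C_{i+2}$ disagree only at $i+1$, so signed-circuit elimination in any $\M'\supseteq\S_0$ forces the exact signs of $C_{i+1}$; a final appeal to the Las~Vergnas--Hamidoune theorem disposes of $C_n$ when $n$ is even. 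The elimination step is indeed clean for the reason you give, and your handling of the $\pm$-ambiguity is sound because the eliminated circuit has full support $E\setminus\{i+1\}$ and every coordinate lies in at most one of $C_i^+\cup C_{i+2}^+$ and $C_i^-\cup C_{i+2}^-$.

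What each approach buys: the paper's argument is self-contained and uniform over all orientations, avoiding both the normalization step and the appeal to~(\ref{LVH1}). Your argument avoids any topological reasoning and stays purely within the circuit axioms, at the cost of first reducing to a canonical orientation (which uses that rank-$2$ uniform oriented matroids have a single isomorphism-reorientation class) and of importing Las~Vergnas--Hamidoune for the even case.
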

\begin{proof}
We start by proving $\widetilde{s}(U_{n-2,n})\le \lceil\frac{n}{2}\rceil$. Let $\mathcal{M}$ be a uniform oriented matroid
of rank $n-2$ and let $\mathcal{A}$ be the topological representation of its dual. This
is, $\mathcal{A}$ is an arrangement of oriented pairs of antipodal points on a circle, i.e., several copies of $S^0$,
on an $S^1$ each dividing $S^1$ in a positive and a negative half. Each point corresponds to a signed circuit of $\mathcal{M}$. The complement
of each edge, i.e., complement of a closed segment of $S^1$ between two
consecutive points, corresponds to a basis of $\mathcal{M}$. We will consider the following set $\S$  of signed circuits of $\mathcal{M}$.  We choose points
from $\mathcal{A}$ to be part of $\S$ in an alternating way around $S^1$
starting at any point and continuing until $\S':=\S\cup -\S$ covers all edges. Clearly, $|\S|=\lceil\frac{n}{2}\rceil$. We prove that $\S$
determines $\mathcal{M}$, i.e., there is a unique arrangement $\mathcal{A}$ of
$n$ antipodal pairs yielding $\S$. 
Clearly, $\S$ gives that also $-\S$ are circuits. So, let us show that $\S':=\S\cup -\S$ determines $\mathcal{M}$. 
Take $\mathcal{A}'$ to be any arrangement having signed circuits $\S'$. First, observe that the subarrangement obtained by restricting to $\S'$ coincides with the restriction of $\mathcal{A}$ to $\S'$. (Both are representations of the same oriented matroid, corresponding to the restriction to the elements corresponding to $\S'$.)
Now, note that the signs
in $\S'$ determine the relative position of any point to points in $S'$. But
since $\S'$ covers all edges of $\mathcal{A}$ the relative position of a point
not in $\S'$ is between a unique pair of consecutive points of $\S'$ and no other
point is between them. Hence $\mathcal{A}'=\mathcal{A}$.
\smallskip

We now show that $\widetilde{s}(U_{n-2,n})\ge \lceil\frac{n}{2}\rceil$. We assume $|\S|<\lceil\frac{n}{2}\rceil$, then one edge of $\mathcal{A}$ is not incident to any element of $\S\cup -\S$. The oriented matroid arising by changing the order of the two copies of $S^0$ incident to that edge has different signs on the corresponding circuits, but does not differ on $\S$. This is a special case of Proposition~\ref{prop:orientedweak}.
\end{proof}

 
\begin{example}\label{exmpl:U24} Let  $\M_1$ and $\M_2$ be the orientations of $U_{2,4}$ which are the duals of the oriented matroids $\M'_1$ and $\M'_2$ induced by the topological representations given in Figure~\ref{fig:ex3}. 
 
 \begin{figure}[htb] 
 \includegraphics[width=.7\textwidth]{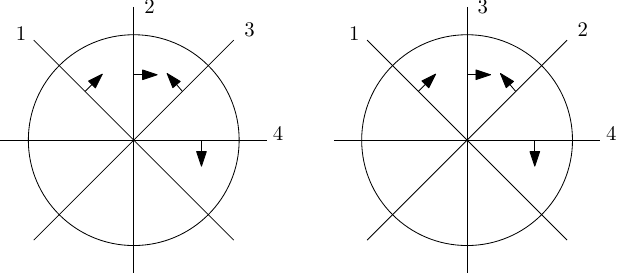}
 \caption{Two 1-dimensional oriented arrangements representing two orientations of $U_{2,4}$.} \label{fig:ex3}
 \end{figure}
 
 We clearly have that $\underline{\M_1}=\underline{\M_2}$ 
 since the sets of circuits of $\underline{\M_1}$ and $\underline{\M_2}$ coincide. However, $\M_1\neq\M_2$ since for their sets of signed circuits $\mathcal{C}_1,\mathcal{C}_2$ we have:
 $$\mathcal{C}_1=\{\{1,2,\bar{3}\},\{1,2,\bar{4}\},\{1,3,\bar{4}\},\{\bar{2},3,\bar{4}\}\}\neq \{\{1,\bar{2},3\},\{1,2,\bar{4}\},\{1,3,\bar{4}\},\{2,\bar{3},\bar{4}\}\}=\mathcal{C}_2.$$ 
 
Note that $1<\widetilde{s}(4,2)$. We may suppose that the circuit that had been chosen to determine $U_{2,4}$ was  $\S=\{1,2,\bar{4}\}$ which clearly does not determine $U_{2,4}$ since $\{1,2,\bar{4}\}$ is a signed circuit of of  $\M_1$ and $\M_2$.
 
Finally, it can be checked that there is no $A\subseteq \{1,2,3,4\}$ such that $_{\bar A}\M_1=\M_2$.
\end{example}


\begin{theorem}\label{Th:unif1}
 Let $3\leq r\leq n-2$. We have $(\frac{1}{2}(\lfloor \frac{n}{r-1}\rfloor+1))^{r-1}\leq\widetilde{s}(U_{n-r,n})$.
\end{theorem}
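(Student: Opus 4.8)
The plan is to bound $\widetilde{s}$ from below through weak coverings. By Corollary~\ref{cor:weak} we have $\widetilde{s}(U_{n-r,n})\ge\widetilde{\WC}(U_{n-r,n})$, and $\widetilde{\WC}(U_{n-r,n})$ is, by definition, at least the size of a smallest weak covering of any \emph{one} orientation $\M$ of $U_{n-r,n}$ that we choose to construct. So it suffices to produce a single orientation $\M$ of $U_{n-r,n}$ in which every weak covering has at least $\big(\tfrac12(\lfloor\tfrac{n}{r-1}\rfloor+1)\big)^{r-1}$ elements.

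Here I would use a simple combinatorial fact about uniform matroids: a signed circuit $C$ of $\M$ covers an invertible basis $B$ if and only if $\underline{C}=B\cup\{e\}$ for some $e\notin B$, i.e.\ iff $B\subseteq\underline{C}$ and $|\underline{C}|=n-r+1$; hence a single circuit can cover two invertible bases $B,B'$ only when $B\cup B'\subseteq\underline{C}$, which forces $|B\,\Delta\,B'|\le 2$. Therefore, if $\M$ admits a family $\mathcal B$ of invertible bases with $|B\,\Delta\,B'|\ge 4$ for all distinct $B,B'\in\mathcal B$, then any weak covering of $\M$ must use a distinct circuit for each member of $\mathcal B$, so it has size $\ge|\mathcal B|$. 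Now I would pass to the dual, exactly as in the proof of Theorem~\ref{Th:unif}: $\M^{*}$ is an orientation of $U_{r,n}$, represented by a simple arrangement $\mathcal A$ of $n$ pseudospheres in $S^{r-1}$; by \cite{Rou-Sturm88} the invertible bases of $\M$ are in bijection with the simplicial cells of $\mathcal A$ (those bounded by exactly $r$ pseudospheres), the basis $B$ being the complement of the set $B^{*}$ of bounding pseudospheres of its cell; and under this bijection $|B\,\Delta\,B'|\ge 4$ translates into: the two simplicial cells share at most $r-2$ of their bounding pseudospheres. So the task becomes: build one simple arrangement of $n$ pseudospheres in $S^{r-1}$ (a realizable one is enough) carrying a family of at least $\big(\tfrac12(\lfloor\tfrac{n}{r-1}\rfloor+1)\big)^{r-1}$ simplicial cells that pairwise share at most $r-2$ bounding hyperplanes.

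For the construction I would take a ``generalized triangular grid'', generalizing the circle arrangement used in Theorem~\ref{Th:unif}. Fix vectors $v_{1},\dots,v_{r}$ spanning $\mathbb R^{r-1}$ with $\sum_{j}v_{j}=0$ (the outer facet normals of an $(r-1)$-simplex), and for each $j$ take a family $G_{j}$ of parallel hyperplanes $\{\langle v_{j},x\rangle=c:c\in L_{j}\}$, with level sets $L_{j}$ of size about $\tfrac{n}{r-1}$ and $\sum_{j}|L_{j}|\le n$ (the few leftover hyperplanes placed in general position far away, so the arrangement stays simple and its oriented matroid stays uniform). For $\vec a=(a_{1},\dots,a_{r})\in\prod_{j}L_{j}$ with $\sum_{j}a_{j}<0$, the region $\Sigma(\vec a)=\{x:\langle v_{j},x\rangle>a_{j}\text{ for all }j\}$ is a bounded $(r-1)$-simplex whose bounding hyperplanes are exactly one from each $G_{j}$; choosing the $L_{j}$ with sufficiently large internal gaps, $\Sigma(\vec a)$ is a cell of $\mathcal A$ whenever $\sum_{j}a_{j}$ equals a suitable fixed value. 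The required separation is then automatic: if $\sum_{j}a_{j}=\sum_{j}b_{j}$ and $\vec a\ne\vec b$, then $\vec a$ and $\vec b$ differ in at least two coordinates, so $\Sigma(\vec a)$ and $\Sigma(\vec b)$ share at most $r-2$ bounding hyperplanes. What remains is to tune the sizes $|L_{j}|$ and the common value of $\sum_{j}a_{j}$ so that the number of admissible $\vec a$ — a count of lattice points on a diagonal slice of a box — is at least $\big(\tfrac12(\lfloor\tfrac{n}{r-1}\rfloor+1)\big)^{r-1}$.

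I expect the main obstacle to be exactly this last step: verifying that the constructed arrangement is genuinely simple (so that it really is an orientation of $U_{r,n}$), that each $\Sigma(\vec a)$ is an honest cell and not clipped by some other hyperplane, and that the diagonal-slice count yields precisely the claimed constant — this is where the factor $\tfrac12$ and the floor $\lfloor\tfrac{n}{r-1}\rfloor$ must be extracted, and where care is needed when $r-1\nmid n$ (the leftover hyperplanes must be placed so as not to destroy the chosen cells). Granting all of this, setting $\mathcal B$ to be the invertible bases dual to these simplicial cells and chaining the estimates gives
\[
\widetilde{s}(U_{n-r,n})\ \ge\ \widetilde{\WC}(U_{n-r,n})\ \ge\ |\mathcal B|\ \ge\ \Big(\tfrac12\big(\lfloor\tfrac{n}{r-1}\rfloor+1\big)\Big)^{r-1},
\]
as desired.
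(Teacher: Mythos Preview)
Your plan coincides with the paper's proof: both dualize via Corollary~\ref{cor:weak}, build a simple affine arrangement in $\mathbb{R}^{r-1}$ from $r$ parallel pencils whose normals sum to zero (written in the paper as the $r{-}1$ coordinate pencils $\{x_i=k\}_{k\in[\ell]}$ together with the diagonal pencil $\{\sum_i x_i=j+\epsilon\}$), and use that the resulting simplicial cells are pairwise vertex-disjoint---which is exactly your condition ``share at most $r-2$ bounding hyperplanes''. The ``tuning'' you left open is the paper's asymmetric choice of $\ell$ levels in each coordinate pencil and $(\ell-1)(r-1)+1$ levels in the diagonal pencil, so that your diagonal slice is parametrized by all of $[\ell]^{r-1}$, giving $\ell^{r-1}$ separated simplices with $n=(2\ell-1)(r-1)+1$; this also fixes the one slip in your sketch (``$|L_j|\approx\tfrac{n}{r-1}$'' is incompatible with $r$ pencils satisfying $\sum_j|L_j|\le n$) and yields the stated constant upon solving for $\ell$.
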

\begin{proof}
We define a simple affine pseudo-hyperplane arrangement in $\mathbb{R}^{r-1}$ in which almost every vertex is contained in exactly one simplex. Start with the \emph{grid}, i.e., the set of translates of coordinate hyperplanes $\mathcal{H}:=(H_i^k)_{i\in [r-1],k\in[\ell]}$ where $H_i^k:=\{x\in\mathbb{R}^{r-1}\mid x_i=k\}$. Now, we add the \emph{diagonal} hyperplanes $\mathcal{D}:=(D^j)_{r-1\leq j \leq k(r-1)}$ given by equations of the form $\sum_{i\in[r-1]}x_i=j+\epsilon$ for $r-1\leq j\leq k(r-1)$ and $0<\epsilon<1$. This, is $\mathcal{D}:=(D^j)_{r-1\leq j \leq k(r-1)}$ are the diagonals intersecting the grid translated a little bit into direction $(1, \ldots, 1)$. See Figure~\ref{fig:grid} for the rank $3$ case.

\begin{figure}[htb] 
 \includegraphics[width=.3\textwidth]{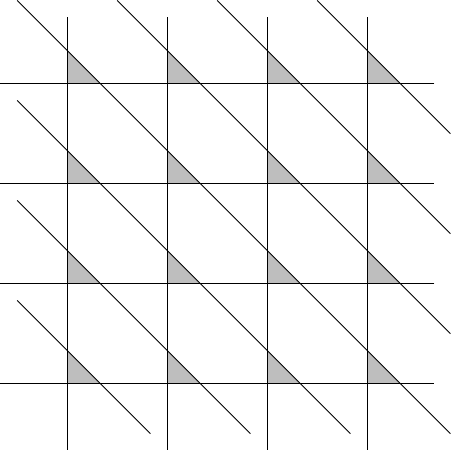}
\caption{The construction of Theorem~\ref{Th:unif1} for the case of rank $3$ and $\ell=4$. The gray cells are of type $R_v$.}
 \label{fig:grid}
\end{figure}

Note that in the resulting arrangement $\mathcal{H}\cup \mathcal{D}$ each vertex $v$ of $\mathcal{H}$ is incident to a unique simplex $R_v$ into direction $(1, \ldots, 1)$. Moreover, $R_v\cap R_w=\emptyset$ unless $v=w$.
Therefore, we need at least one vertex for each of these $\ell^{r-1}$ simplices.
We extend $\mathcal{H}\cup \mathcal{D}$ to an arrangement $\mathcal{A}$ representing an orientation of $U_{r,n}$ with $n=(\ell-1)(r-1)+1+(\ell)(r-1)=(2\ell-1)(r-1)+1$. Thus, to determine the dual of any oriented matroid arising from an orientation of $\mathcal{A}$ at least $(\frac{n+r-2}{2(r-1)})^{r-1}$ circuits are needed. So this is the lower bound if $n$ can be expressed as $(2\ell-1)(r-1)+1$. For general $n$ the argument of the lower bound is calculated by $(r-1)\lfloor \frac{n-1}{r-1}\rfloor$. A straight-forward computation leads to the claimed result.
\end{proof}

Even if we have shown that $s$ and $\widetilde{s}$ differ in general, one implication of the previous result is, that they are asymptotically the same for uniform oriented matroids.

A particular consequence of Theorem \ref{prop:motiv} shown in~\cite{Cha-13} is that $s(U_{r,n})$ behaves asymptotically as $\frac{1}{r+1} {n \choose r}$ for any fixed $r$.
We get that $s(U_{n-r,n})\in \Theta(n^{r-1})$. By combining Theorems~\ref{Th:unif} and~\ref{Th:unif1} and Observation~\ref{obs:three} we get
 
 \begin{corollary}\label{cor:stilde}
  For fixed $r\geq 1$ we have $\widetilde{s}(U_{n-r,n})=\Theta(n^{r-1})$.
 \end{corollary}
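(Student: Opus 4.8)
The plan is to combine the three ingredients already at hand: the lower bound of Theorem~\ref{Th:unif1}, the trivial inequality $\widetilde{s}(U_{n-r,n})\leq s(U_{n-r,n})$ from Observation~\ref{obs:three}, and the asymptotic estimate $s(U_{n-r,n})\in\Theta(n^{r-1})$ that is quoted just above the statement (it follows from Theorem~\ref{prop:motiv} together with the known asymptotics $\C(n,r+1,r)\sim\frac{1}{r+1}\binom{n}{r}$ and $\CC(n,r+1,r)\sim\frac{1}{r+1}\binom{n}{r}$ from~\cite{Cha-13}, applied with the roles of $r$ and $n-r$ interchanged, i.e.\ to the \emph{corank}). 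Since $r$ is fixed, $\binom{n}{n-r}=\binom{n}{r}=\Theta(n^r)$ would naively suggest $\Theta(n^r)$, so a small word of care is needed here: one should verify that the cited statement is indeed about $s(U_{n-r,n})$ with $n-r$ in the rank slot, which by Theorem~\ref{prop:motiv} equals $\Theta\!\big(\frac{1}{n-r+1}\binom{n}{n-r}\big)$; for fixed corank this is $\Theta(n^{r-1})$, not $\Theta(n^r)$. This reconciles the exponents and is the one genuinely delicate bookkeeping point.

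Concretely, I would argue as follows. For the upper bound, Observation~\ref{obs:three} gives $\widetilde{s}(U_{n-r,n})\leq s(U_{n-r,n})$, and the preceding discussion gives $s(U_{n-r,n})=O(n^{r-1})$ for fixed $r$; hence $\widetilde{s}(U_{n-r,n})=O(n^{r-1})$. For the lower bound, Theorem~\ref{Th:unif1} (valid for $3\leq r\leq n-2$) states
$$
\Big(\tfrac12\big(\lfloor\tfrac{n}{r-1}\rfloor+1\big)\Big)^{r-1}\leq\widetilde{s}(U_{n-r,n}),
$$
and for fixed $r\geq 3$ the left-hand side is $\big(\tfrac{n}{2(r-1)}+O(1)\big)^{r-1}=\Omega(n^{r-1})$. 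The cases $r=1$ and $r=2$ must be handled separately: for $r=1$, $\widetilde{s}(U_{n-1,n})$ is a constant ($U_{n-1,n}$ has, up to reorientation, essentially one orientation, and indeed $\Theta(n^{0})=\Theta(1)$), and for $r=2$ we invoke Theorem~\ref{Th:unif}, which gives $\widetilde{s}(U_{n-2,n})=\lceil\tfrac n2\rceil=\Theta(n)=\Theta(n^{r-1})$. Combining the matching upper and lower bounds in each case yields $\widetilde{s}(U_{n-r,n})=\Theta(n^{r-1})$.

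The only real obstacle is the exponent reconciliation flagged above: one must be careful that the $\Theta(n^{r-1})$ claimed for $s(U_{n-r,n})$ is correctly read off from the $\frac{1}{\rho+1}\binom{n}{\rho}$-type asymptotics with $\rho=n-r$ the \emph{rank}, since $\binom{n}{n-r}$ itself is $\Theta(n^r)$ and only the division by $n-r+1=\Theta(n)$ brings the exponent down to $r-1$; equivalently one can phrase everything in terms of the covering numbers $\C(n,n-r+1,n-r)$ and $\CC(n,n-r+1,n-r)$ and note that a connected $(n,n-r+1,n-r)$-covering has size $\Theta(n^{r-1})$ because the blocks are of co-size $r-1$. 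Beyond that, the argument is a routine sandwiching, so I would keep the write-up short: state the upper bound via Observation~\ref{obs:three} and the quoted asymptotics for $s$, state the lower bound via Theorem~\ref{Th:unif1} for $r\geq 3$ and via Theorem~\ref{Th:unif} for $r=2$ (with the trivial constant bound for $r=1$), and conclude.
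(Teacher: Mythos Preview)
Your approach is the paper's own: the lower bound comes from Theorem~\ref{Th:unif1} (and Theorem~\ref{Th:unif} for $r=2$, the trivial case for $r=1$), and the upper bound from Observation~\ref{obs:three} together with the asserted $s(U_{n-r,n})\in\Theta(n^{r-1})$. One caveat on the bookkeeping you correctly flag but do not quite resolve: the asymptotic $s(U_{\rho,n})\sim\frac{1}{\rho+1}\binom{n}{\rho}$ is stated for \emph{fixed} rank $\rho$, so plugging in $\rho=n-r$ is not literally licensed; the cleanest route to the upper bound is instead the first inequality of Observation~\ref{obs:three}, namely $s(U_{n-r,n})\le|\Ss_e|=\binom{n-1}{r-1}=\Theta(n^{r-1})$, which sidesteps the issue entirely.
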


Let us now consider $\overline{s}$ for uniform oriented matroids.

\begin{theorem}\label{thm:s(n,r)}
 For any $1\leq r\leq n-1$ we have 
 $\overline{s}(U_{r,n})=\binom{n}{r+1}-n+r+1$.
\end{theorem}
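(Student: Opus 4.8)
The plan is to apply Theorem~\ref{thm:generalboundssbar} for the upper bound and build an explicit ``bad'' set of circuits together with an orientation for the lower bound. We have $|\mathrm{C}(U_{r,n})| = \binom{n}{r+1}$ and $|E|-r(U_{r,n}) = n-r$, so the claimed value is exactly $|\mathrm{C}|+1 - (|E|-r(\Mm))$. Thus by Theorem~\ref{thm:generalboundssbar} the upper bound $\overline{s}(U_{r,n})\le \binom{n}{r+1}-n+r+1$ follows immediately, \emph{provided} one checks that the minimum in that theorem is attained by $|E|-r(\Mm)=n-r$ rather than by $\lambda(U_{r,n})$; that is, one must verify $\lambda(U_{r,n})\ge n-r$, i.e., that deleting any $n-r-1$ circuits from $\mathrm{C}(U_{r,n})$ leaves the base graph $B_{\mathrm{C}\setminus\Ss}$ connected. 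Since each base of $U_{r,n}$ is covered by exactly $n-r$ fundamental circuits, after deleting $n-r-1$ circuits every base is still covered; connectivity of the base graph can then be argued directly from the structure of $U_{r,n}$ (two bases $B,B'$ with $|B\triangle B'|=2$ are joined iff the unique circuit $B\cup B'$ survives, and one shows a path between any two bases using only surviving circuits, exploiting that $n-r-1$ deletions are few). Alternatively, and perhaps more cleanly, one observes this is a special case of the remark made right after Theorem~\ref{thm:generalboundssbar}.

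For the lower bound I would exhibit a set $\Ss$ of $\binom{n}{r+1}-n+r$ circuits that fails to determine all orientations of $U_{r,n}$. The natural candidate: pick a base $B_0$ of $U_{r,n}$ and let $\Ss$ consist of all circuits \emph{except} the $n-r$ fundamental circuits $C(B_0,e)$ for $e\notin B_0$ --- equivalently, all $(r+1)$-subsets of $E$ except those of the form $B_0\cup\{e\}$. This has size $\binom{n}{r+1}-(n-r)$. By Theorem~\ref{thm:generalboundssbar}'s second statement (or directly by Corollary~\ref{cor:weak} and Proposition~\ref{prop:orientedweak}), it suffices to produce an orientation $\M$ of $U_{r,n}$ in which $B_0$ is invertible, i.e., a mutation. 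This is exactly the construction used in the proof of Theorem~\ref{prop:motiv}: take an $(n-r)$-simplex $R$ in $\mathbb{R}^{n-r}$, let $\mathcal{A}$ consist of the $n-r+1$ bounding hyperplanes of $R$ (labelled by the dual base $B_0^* = E\setminus B_0$) together with $r-1$ further hyperplanes missing $R$; then $R$ is a simplicial region, so $B_0^*$ corresponds to a mutation of $\M^*$, hence $B_0$ is an invertible base of $\M$. Since $\Ss$ does not cover $B_0$, it is not a weak covering of this $\M$, so $\Ss$ does not determine $\M$; therefore $\overline{s}(U_{r,n}) > \binom{n}{r+1}-(n-r)$, giving the matching lower bound.

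The main obstacle I anticipate is the verification that $\lambda(U_{r,n})\ge n-r$, i.e., the base-graph connectivity after removing few circuits; everything else is a direct citation of the general theorems together with the mutation construction already spelled out in the proof of Theorem~\ref{prop:motiv}. One subtlety worth watching: the edge case $r=n-1$, where $n-r=1$ and $U_{n-1,n}$ has a single circuit (the whole ground set), making $\binom{n}{r+1}-n+r+1 = 1-n+n = 1$; here the statement reads $\overline{s}=1$, which should be checked to be consistent (the single circuit trivially determines the orientation up to global sign, and one circuit is needed). For $r=1$, $U_{1,n}$ is a coloop-free rank-$1$ matroid on $n$ parallel elements with $\binom{n}{2}$ circuits, and the formula gives $\binom{n}{2}-n+2$; again this fits the general pattern and the mutation argument goes through since every base (a single element) bounds a $1$-simplex. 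I would handle these as brief remarks rather than separate cases.
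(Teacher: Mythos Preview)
Your overall strategy coincides with the paper's: both bounds come from Theorem~\ref{thm:generalboundssbar}, with the lower bound supplied by the mutation construction already used in the proof of Theorem~\ref{prop:motiv} (every base of $U_{r,n}$ is invertible in some orientation). The edge case $n=r+1$ is also handled separately in the paper, just as you suggest.

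There is, however, a genuine gap in your upper-bound argument. You correctly isolate the crux---showing $\lambda(U_{r,n})\ge n-r$---but neither of your two proposed resolutions works. The first (``argued directly \ldots\ exploiting that $n-r-1$ deletions are few'') is a restatement of the goal, not a proof: you would still need to exhibit, for an \emph{arbitrary} set of $n-r-1$ deleted circuits, a path in $B_{\Cc\setminus\Ss}$ between any two bases, and no mechanism for doing so is given. The second suggestion is circular: the remark following Theorem~\ref{thm:generalboundssbar} merely records the authors' \emph{belief} that the minimum is always attained by $|E|-r(\Mm)$; it is not a result you may cite, and in fact Theorem~\ref{thm:s(n,r)} is offered precisely as evidence for that belief in the uniform case.

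The paper closes this gap with a clean structural argument you are missing. From the proof of Theorem~\ref{prop:motiv} one knows that for $U_{r,n}$ the graph $B_{\Ss}$ is connected if and only if the block-intersection graph $G(\Ss)$ is connected. Hence $\lambda(U_{r,n})$ equals the vertex connectivity of $G(\Cc)$, which is the Johnson graph $J(n,r+1)$; its vertex connectivity is its degree $(r+1)(n-r-1)$. A one-line check gives $(r+1)(n-r-1)\ge n-r$ for all $n>r+1$ and $r\ge 1$, so $\min(\lambda(U_{r,n}),\,n-r)=n-r$ as required.
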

\begin{proof}
Assume first that $n>r+1$. We will use Theorem~\ref{thm:generalboundssbar}. First, as argued in the proof of Theorem~\ref{prop:motiv} \emph{any} basis of $U_{r,n}$ is invertible in some orientation. Thus, we already have $$|\mathrm{C}|+1-|E|+r(M)\leq\overline{s}(\Mm)\leq |\mathrm{C}|+1-\min(\lambda(U_{r,n}), |E|-r(M)).$$ As we showed in the proof of Theorem~\ref{prop:motiv} for a set of $\Ss$ of circuits of $U_{r,n}$ we have that $B_{\Ss}$ is connected if and only if $G_{\Ss}$ is connected. Therefore $\lambda(U_{r,n})$ coincides with the vertex-connectivity $\kappa(G_{\Ss})$ of $G_{\Ss}$. But $G_{\Ss}$ is exactly the Johnson graph $J(n,r+1)$, see e.g.~\cite{God-01}. Now the vertex connectivity of $J(n,r+1)$ is well-known to be its degree, which is $(r+1)(n-r-1)$. On the other hand $|E|+r(M)=\binom{n}{r+1}-n+r$. We obtain the result.

If $n=r+1$ then $\binom{n}{r+1}-n+r+1=\binom{n}{r+1}$ is necessary and sufficient as well, because it indeed means taking all circuits and with one circuit less one could not cover one base.
\end{proof}

\section{Regular matroids}\label{sec:regular}

 In~\cite{Bla-78} it is shown that binary orientable matroids are exactly the regular matroids and that regular matroids have exactly one reorientation class. This section relies on these two facts. In particular, the first one leads us to give some results not depending on orientability when considering a general setting of binary matroids. The second one together with Theorem~\ref{thm:1reclass} immediately gives
 
\begin{corollary}\label{cor:regular}
 If $\Mm$ is regular and connected then $\widetilde{s}(\Mm)=s(\Mm)=\ccc(\Mm)$.
\end{corollary}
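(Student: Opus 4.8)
The plan is to derive Corollary~\ref{cor:regular} directly from the two structural facts recalled at the start of Section~\ref{sec:regular} together with Theorem~\ref{thm:1reclass}. First I would invoke the result of \cite{Bla-78}: a regular matroid is orientable, so the quantities $\widetilde{s}(\Mm)$, $s(\Mm)$ are well-defined for a connected regular $\Mm$, and moreover any orientation of $\Mm$ has exactly one reorientation class. Since $\Mm$ is assumed connected, the hypotheses of Theorem~\ref{thm:1reclass} are satisfied verbatim: $\Mm$ is a connected matroid with a single reorientation class. Applying that theorem gives $\widetilde{s}(\Mm)=s(\Mm)=\ccc(\Mm)$ immediately, and the ``moreover'' part of Theorem~\ref{thm:1reclass} already tells us these equalities are witnessed by a common connected element covering $\Ss$, so nothing further is needed.

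The one point that deserves a sentence of care is that Theorem~\ref{thm:1reclass} is phrased in terms of a matroid ``having a single reorientation class,'' whereas the fact from \cite{Bla-78} is about orientations of regular matroids having a single reorientation class. I would make explicit that for a regular matroid these coincide: a regular matroid is binary, hence by \cite{Bla-78} binary orientable, and all its orientations lie in a single reorientation class — which is precisely the phrasing needed to apply Theorem~\ref{thm:1reclass}. With that identification, the corollary is a one-line consequence.

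Since this is a corollary of an already-proved theorem, there is essentially no obstacle: the ``hard part'' is purely bookkeeping, namely confirming that the two cited facts from \cite{Bla-78} supply exactly the two hypotheses (orientability, single reorientation class) that Theorem~\ref{thm:1reclass} consumes. I would therefore keep the proof to two or three sentences, as in:

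\begin{proof}
By~\cite{Bla-78} a regular matroid is orientable and all its orientations lie in a single reorientation class. Hence a connected regular matroid $\Mm$ is a connected orientable matroid with a single reorientation class, so Theorem~\ref{thm:1reclass} applies and yields $\widetilde{s}(\Mm)=s(\Mm)=\ccc(\Mm)$, these values being attained by a common connected element covering of $\Mm$.
\end{proof}
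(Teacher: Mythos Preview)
Your proposal is correct and matches the paper's own argument essentially verbatim: the paper derives the corollary in one line by combining the fact from~\cite{Bla-78} that regular matroids have exactly one reorientation class with Theorem~\ref{thm:1reclass}. Your extra sentence reconciling the phrasing of~\cite{Bla-78} with the hypothesis of Theorem~\ref{thm:1reclass} is fine but not strictly necessary.
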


From now on we will focus on $\ccc(\Mm)$ rather than $s(\Mm)$. We will compare $\ccc(\Mm)$ with the minimum size of a (not necessarily connected) element covering of $\Mm$ denoted by {\boldmath $\cc(\Mm)$}. Moreover, since we will only consider element coverings rather than base coverings in this section and the section after, we will simply refer to them by coverings. In the present section we will derive several general bounds on $\ccc(M)$, which we will apply in the next section to some graphic and cographic matroids. 
\smallskip

Let us first reformulate the parameters $\cc(\Mm)$ and $\ccc(\Mm)$ when $\Mm$ is either a graphic or cographic. By Corollary~\ref{cor:regular} finding the circuits needed to determine all the orientations of a graphic matroid $\Mm(G)$ is equivalent to finding a set of cycles $\Ss$ in $G$ such that
\begin{itemize}
 \item every edge of $G$ is contained in some $C\in \Ss$,
 \item the graph induced by $\Ss$ (having as set of vertices the cycles of $\Ss$ and where two vertices $C$ and $C'$ are joined by an edge if and only if $C\cap C'\neq\emptyset$) is connected.
\end{itemize}

Such a set is called a \emph{connected cycle cover}. As for general matroids we denote the minimum size of such a set of cycles of $G$ by {\boldmath $\ccc(G)$}$=\ccc(\Mm(G))$. The size of a minimum (not necessarily connected) cycle cover is denoted by {\boldmath$ \cc(G)$}$=\cc(\Mm(G))$. 
\smallskip

A \emph{bond} $B$ in a connected graph $G$ is an edge-set which is inclusion-minimal with the property that $G\setminus B$ is disconnected. Finding the circuits needed to determine all the orientations of a cographic matroid $\Mm^*(G)$ is equivalent to finding a set of bonds $\Ss$ in $G$ such that
\begin{itemize}
 \item every edge of $G$ is contained in some $B\in \Ss$,
 \item the graph induced by $\Ss$ (having as set of vertices the bonds of $\Ss$ and where two vertices $B$ and $B'$ are joined by an edge if and only if $B\cap B'\neq\emptyset$)  is connected.
\end{itemize}

Such a set is called a \emph{connected bond cover}. We denote the minimum size of such a set of bonds of $G$ by {\boldmath $\cbc(G)$}. The size of a minimum (not necessarily connected) bond cover is denoted by {\boldmath $\bc(G)$}. This is, $\cbc(G)=\ccc(\Mm^*(G))$ and $\bc(G)=\cc(\Mm^*(G))$.
\smallskip

\begin{lemma}\label{lem:makeconnected}
 For any connected matroid $\Mm$ we have $\cc(\Mm)\leq \ccc(\Mm)\leq 2\cc(\Mm)-1$.
\end{lemma}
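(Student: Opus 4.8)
The lower bound $\cc(\Mm) \le \ccc(\Mm)$ is immediate: every connected element covering is in particular an element covering, so the minimum over the smaller class of sets cannot be smaller. The work is entirely in the upper bound $\ccc(\Mm) \le 2\cc(\Mm) - 1$. The plan is to start with a minimum element covering $\Ss = \{C_1, \dots, C_t\}$ with $t = \cc(\Mm)$ and modify it into a connected covering by adding at most $t-1$ further circuits, one for each ``link'' needed to glue the intersection graph $I_{\Ss}$ together.

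First I would look at the intersection graph $I_{\Ss}$ and its connected components; say there are $p \le t$ of them, with vertex sets corresponding to subfamilies $\Ss_1, \dots, \Ss_p$. The key structural input is that $\Mm$ is \emph{connected}: this means that for any partition of the ground set $E$ into two nonempty parts $E', E''$, there is a circuit of $\Mm$ meeting both parts. I would apply this as follows. Pick a component, say the one covering element set $A_1 = \bigcup_{C \in \Ss_1} \underline{C}$; if $A_1 \ne E$, then since $\Mm$ is connected there is a circuit $D_1$ with $D_1 \cap A_1 \ne \emptyset$ and $D_1 \not\subseteq A_1$, i.e.\ $D_1$ meets both $A_1$ and $E \setminus A_1$. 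Because $\Ss$ already covers $E$, the part of $D_1$ outside $A_1$ meets some $C \in \Ss_j$ with $j \ne 1$; thus adding $D_1$ merges component $1$ with at least one other component. Iterating, each added circuit strictly decreases the number of components, so after at most $p - 1 \le t - 1$ additions the intersection graph is connected. The resulting family still covers $E$ (we only added circuits), so it is a connected element covering of size at most $t + (p-1) \le 2t - 1 = 2\cc(\Mm) - 1$.

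A slightly cleaner bookkeeping, which I would actually write up, is to process components one at a time: maintain a ``current'' connected piece $\mathcal{T}$ (initially $\Ss_1$) and repeatedly use connectivity of $\Mm$ to find a circuit bridging the element set of $\mathcal{T}$ to its complement, absorb it plus whichever old component it now touches, and continue until all of $\Ss$ lies in one component; this again uses at most $p-1 \le t-1$ new circuits. One must check the edge case $p = 1$, where $\Ss$ is already connected and nothing is added, and the case $t = 1$, where the bound reads $\ccc(\Mm) \le 1$ and indeed a single circuit whose underlying set is all of $E$ must exist precisely when $\cc(\Mm) = 1$.

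The main obstacle is making sure that each bridging circuit $D_i$ genuinely reduces the component count rather than sitting as an isolated new vertex or only attaching to already-merged material without gaining ground. This is handled by the two facts used in tandem: $D_i$ meets the current piece (so it attaches there), and $D_i$ leaves that piece's element set (so, since the original $\Ss$ covered $E$, it lands inside some not-yet-absorbed component). The only subtlety is to phrase the iteration so that ``not-yet-absorbed'' is well-defined and that we never need to re-glue something already glued; processing the components in a fixed order and only counting a new circuit when it first attaches a fresh component takes care of this, giving exactly $p - 1 \le \cc(\Mm) - 1$ extra circuits.
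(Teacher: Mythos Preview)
Your proposal is correct and follows essentially the same approach as the paper: start from a minimum element covering, then use connectedness of $\Mm$ to add at most $p-1\le \cc(\Mm)-1$ bridging circuits, each reducing the number of components of $I_{\Ss}$ by at least one. The only cosmetic difference is that the paper invokes connectivity in the form ``any two elements lie in a common circuit'' (so one directly gets a circuit meeting chosen elements of two distinct components), whereas you use the equivalent partition form and then appeal to the fact that $\Ss$ covers $E$ to land in another component; both routes give the same bound with the same one-line iteration.
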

\begin{proof}
The first inequality is trivial and only stated for completeness. For the second one, 
 take a circuit cover $\Ss$ of $\Mm$ and let $C,C'$ be circuits not in the same component of $I_{\Ss}$. Since $\Mm$ is connected there is a circuit $C''$ incident to both $C$ and $C'$. Adding $C''$ to $\Ss$ reduces the number of components by at least one. This yields the claim.
\end{proof}

Indeed the upper bound in Lemma~\ref{lem:makeconnected} is best-possible as already shown by graphic matroids:
\begin{proposition}\label{prop:makeconnectedbestpossible}
 For every even $n\geq 2$ we have $\ccc(K_{2,n})=2\cc(K_{2,n})-1$.
\end{proposition}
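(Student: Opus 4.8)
The plan is to compute both sides of the claimed equality for $K_{2,n}$ with $n$ even and show they coincide. Write $V(K_{2,n})=\{u_1,u_2\}\cup\{w_1,\dots,w_n\}$ with the $u_i$ being the two vertices on the small side. Every cycle of $K_{2,n}$ is a $4$-cycle of the form $u_1w_iu_2w_j$ for $i\neq j$, so a cycle cover is exactly a set of such $4$-cycles whose union contains every edge $u_sw_i$. First I would establish the lower bound $\cc(K_{2,n})\geq \tfrac n2$: each $4$-cycle uses exactly two of the vertices $w_i$, hence covers the edges at only two of them; since every $w_i$ has degree $2$ and all of its edges must be covered, and each cycle ``touches'' only two $w_i$'s, at least $\lceil n/2\rceil=n/2$ cycles are needed. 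Matching this, pairing up the $w_i$'s as $\{w_1,w_2\},\{w_3,w_4\},\dots$ and taking the cycle $u_1w_{2k-1}u_2w_{2k}$ for each pair gives a cycle cover of size exactly $n/2$, so $\cc(K_{2,n})=n/2$.

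Next I would identify when such a minimum cycle cover is connected. Two $4$-cycles $u_1w_iu_2w_j$ and $u_1w_ku_2w_l$ always share the vertices $u_1,u_2$, hence they always intersect; so in fact $I_{\Ss}$ is a complete graph and any cycle cover of $K_{2,n}$ is automatically connected. This means $\ccc(K_{2,n})=\cc(K_{2,n})=n/2$ — wait, that would contradict the proposition, so the subtlety must be in how ``intersection'' is defined for the intersection graph $I_{\Ss}$. Re-reading the definitions in Section~\ref{sec:regular}: two cycles $C,C'$ are joined in the induced graph if and only if $C\cap C'\neq\emptyset$, i.e. they share a vertex, which as noted is always true here. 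Given that, the genuine content of the proposition forces the intersection to be measured by shared \emph{edges} (as is standard for circuit intersection graphs of matroids, where $I_{\Ss}$ has an edge when the circuits share a ground-set \emph{element}). So the correct reading is: $C\cap C'\neq\emptyset$ as \emph{edge}-sets. With that reading, two $4$-cycles $u_1w_iu_2w_j$ and $u_1w_ku_2w_l$ share an edge iff $\{i,j\}\cap\{k,l\}\neq\emptyset$.

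With the edge-intersection reading the argument proceeds cleanly. For the upper bound $\ccc(K_{2,n})\leq 2\cc(K_{2,n})-1=n-1$: take the cycles $C_k:=u_1w_ku_2w_{k+1}$ for $k=1,\dots,n-1$. These cover all edges (edge $u_sw_k$ lies in $C_{k-1}$ or $C_k$), and $C_k$ shares the edge $u_sw_{k+1}$ with $C_{k+1}$, so the intersection graph is a path, hence connected; thus $\ccc(K_{2,n})\leq n-1$. For the matching lower bound I would argue that any connected cycle cover has size at least $n-1$. Suppose $\Ss$ is a connected cycle cover. Consider the auxiliary bipartite incidence between cycles in $\Ss$ and the indices $\{1,\dots,n\}$, where $C=u_1w_iu_2w_j\in\Ss$ is incident to $i$ and to $j$. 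Covering all edges forces every index to be incident to at least one cycle, and two cycles share an edge iff their index-pairs intersect, so $I_{\Ss}$ connected means the ``index pairs'' $\{i,j\}$, viewed as edges of a graph $H$ on vertex set $\{1,\dots,n\}$ (with $\Ss$ as edge set), form a connected spanning subgraph; wait, one must be careful that distinct cycles can yield the same index pair, but then they share edges so contribute to connectivity without helping cover new indices — in any case a connected graph spanning $n$ vertices has at least $n-1$ edges, giving $|\Ss|\geq n-1$.

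The main obstacle I anticipate is precisely pinning down the right notion of intersection so that the two extremal constructions (the ``pair up'' cover of size $n/2$ achieving $\cc$, and the ``consecutive'' cover of size $n-1$ achieving $\ccc$) are both valid, and then verifying rigorously that no connected cover beats $n-1$. The lower bound $|\Ss|\geq n-1$ via the spanning-connected-subgraph-on-$n$-vertices argument is the crux; a clean way to phrase it is: define $H=(\{1,\dots,n\},\{\,\{i,j\} : u_1w_iu_2w_j\in\Ss\,\})$, observe that covering all edges of $K_{2,n}$ forces $H$ to have no isolated vertices and that edge-adjacency in $I_{\Ss}$ corresponds exactly to sharing an endpoint in $H$, so $I_{\Ss}$ connected plus $H$ having no isolated vertex forces $H$ connected on all $n$ vertices, whence $|E(H)|\geq n-1$ and a fortiori $|\Ss|\geq n-1$. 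Combining with the upper bound and with $\cc(K_{2,n})=n/2$ (which needs $n$ even for the ceiling to be exactly $n/2$, explaining the hypothesis) yields $\ccc(K_{2,n})=n-1=2\cdot\tfrac n2-1=2\cc(K_{2,n})-1$.
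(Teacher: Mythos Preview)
Your final argument is correct and follows essentially the same idea as the paper's. Both compute $\cc(K_{2,n})=n/2$ by noting that every cycle is a $4$-cycle using exactly two of the $w_i$, and both derive the lower bound $\ccc(K_{2,n})\geq n-1$ from the correspondence between $4$-cycles and pairs $\{i,j\}\subseteq\{1,\dots,n\}$: the paper phrases this as ``adding a cycle can be incident to at most two components of $I_{\Ss}$'' and then invokes Lemma~\ref{lem:makeconnected}, while you package the same fact as ``the auxiliary graph $H$ on $\{1,\dots,n\}$ must be spanning and connected, hence has at least $n-1$ edges.'' Your explicit path-of-consecutive-cycles construction for the upper bound replaces the paper's appeal to Lemma~\ref{lem:makeconnected}. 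Two small clean-ups: your eventual reading of $C\cap C'$ as intersection of \emph{edge}-sets is correct (circuits of $\Mm(G)$ are edge-sets, so this is forced by the matroid definition, not a convention to be guessed at), and your worry that distinct cycles might yield the same index pair is unfounded, since in $K_{2,n}$ the pair $\{i,j\}$ determines the $4$-cycle uniquely.
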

\begin{proof}
 Clearly, in $K_{2,n}$ the longest cycles are of length $4$ and since $n$ is even a partition into $4$-cycles is possible. Thus, $\cc(K_{2,n})=\frac{n}{2}$. Now, given some set of cycles $\Ss$ in $K_{2,n}$, adding another cycle $C$ it can be incident to at most two components of $I_{\Ss}$. Thus, the construction in Lemma~\ref{lem:makeconnected} is best-possible.
\end{proof}

On the other hand there are cases, were $\cc$ and $\ccc$ coincide.

\begin{theorem}\label{thm:3}
 Let $\Mm$ be a binary and connected matroid. Denote by $\Cc^*_3$ the set of cocircuits of size at most $3$. If $\Cc^*_3$ covers $E$ and its intersection graph is connected, then any covering $\Ss$ of $\Mm$ is connected, i.e., $\cc(\Mm)=\ccc(\Mm)$.
\end{theorem}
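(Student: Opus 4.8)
The plan is to show that under the stated hypotheses, the intersection graph $I_{\Ss}$ of an arbitrary covering $\Ss$ is automatically connected; combined with the trivial fact that a covering covers $E$, this says $\Ss$ is a connected element covering, so $\cc(\Mm)=\ccc(\Mm)$. The key tool is the orthogonality between circuits and cocircuits in a binary matroid: $|C\cap C^*|$ is always even, so a circuit $C$ and a cocircuit $C^*$ of size $\le 3$ meet in $0$ or exactly $2$ elements; in particular if they meet at all, they share at least two elements. I would exploit this to ``transport'' connectivity information from the cocircuit intersection graph to $I_{\Ss}$.

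First I would set up the following claim: if $C^*\in\Cc_3^*$ and $e,f\in C^*$, then any two circuits $C,C'\in\Ss$ with $e\in C$ and $f\in C'$ lie in the same component of $I_{\Ss}$. To see this, note that since $\Ss$ covers $E$ we may pick such $C,C'$ in the first place. If $|C^*|=2$, then $C^*=\{e,f\}$ and by orthogonality $e\in C$ forces $f\in C$ as well (the intersection $C\cap C^*$ is even and nonempty, hence equals $C^*$), so $f\in C\cap C'$ and $C,C'$ are adjacent in $I_{\Ss}$. If $|C^*|=\{e,f,g\}$, then each circuit of $\Ss$ meeting $C^*$ contains exactly two of $e,f,g$; pick a circuit $C_g\in\Ss$ through $g$, which also contains one of $e,f$, say $e$ (the case $f$ is symmetric), so $C_g$ is adjacent to $C$ through $e$; and then tracking the third element we connect up to $C'$. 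The bookkeeping here is a small finite case analysis on which pair of $\{e,f,g\}$ each chosen circuit contains, but it is routine; the point is that ``being covered by a common small cocircuit'' glues circuits together in $I_{\Ss}$.

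Next I would use the hypothesis that the intersection graph of $\Cc_3^*$ is connected together with the fact that $\Cc_3^*$ covers $E$. Given any two circuits $C,C'\in\Ss$, pick elements $e\in C$ and $e'\in C'$; since $\Cc_3^*$ covers $E$, there are cocircuits $C^*_0\ni e$ and $C^*_m\ni e'$ in $\Cc_3^*$, and since the intersection graph of $\Cc_3^*$ is connected there is a path $C^*_0,C^*_1,\dots,C^*_m$ of small cocircuits with $C^*_{i}\cap C^*_{i+1}\ne\emptyset$. Picking an element $g_i\in C^*_{i}\cap C^*_{i+1}$ for each $i$ and a circuit $D_i\in\Ss$ through $g_i$ (possible since $\Ss$ covers $E$), the claim above applied to $C^*_i$ with its two marked elements $g_{i-1},g_i$ shows $D_{i-1}$ and $D_i$ are in the same component of $I_{\Ss}$; the endpoints are handled by applying the claim to $C^*_0$ with $e,g_0$ and to $C^*_m$ with $g_{m-1},e'$. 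Chaining these equalities of components gives that $C$ and $C'$ are in the same component of $I_{\Ss}$, so $I_{\Ss}$ is connected and $\Ss$ is a connected covering.

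The main obstacle is the finite case analysis in the first claim for $|C^*|=3$: one must be careful that the chosen circuits of $\Ss$ through prescribed elements of $C^*$ genuinely chain up in $I_{\Ss}$ regardless of which pair of the three elements each contains, and to check that no choice leads to a dead end. All of this stays within a single triple, so it is a matter of a short exhaustive check rather than a genuine difficulty; orthogonality does all the real work. Everything else is just transporting the given connectivity of the cocircuit intersection graph across these local gluings.
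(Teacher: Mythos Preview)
Your proposal is correct and follows essentially the same approach as the paper. Both arguments hinge on the orthogonality fact that in a binary matroid a circuit meets a cocircuit of size at most $3$ in exactly two elements, so any two circuits hitting a common small cocircuit already share an element; the connectivity of the intersection graph of $\Cc^*_3$ is then used to chain such local adjacencies across a path of small cocircuits. The paper packages the chaining as an induction on a distance $d_3(C,C')$ in the cocircuit graph, whereas you walk the path explicitly with intermediate circuits $D_i$, but these are the same proof organized slightly differently. (Incidentally, your case $|C^*|=3$ in the first claim can be shortened: since both $C$ and $C'$ contain exactly two of the three elements of $C^*$, pigeonhole gives $C\cap C'\neq\emptyset$ directly, with no case analysis needed.)
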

\begin{proof}
 Let $\Ss$ be an element covering. Note that the existence of a circuit covering implies that there are no cocircuits of size $1$. Let $G_3$ be the connected intersection graph of $\Cc^*_3$. Now, every $C\in \Ss$ intersects elements of $\Cc^*_3$. Given $C,C'\in \Ss$ denote by $d_3(C,C')$ the length of a shortest path between two elements of $X,X'\in\Cc^*_3$ in $G_3$ such that $X$ intersects $C$ and $X'$ intersects $C'$. We claim that between every $C,C'\in \Ss$ there is a path in $I_{\Ss}$. We proceed by induction on $d_3(C,C')$.
 
 If $d_3(C,C')=0$, then there is $X\in\Cc^*_3$ intersecting both $C$ and $C'$. Since $\Mm$ is binary both $C$ and $C'$ intersect $X$ in an even number of elements. Since $|X|\leq 3$ both intersect $X$ in two elements and therefore $C\cap C'\neq\emptyset$. Thus, they are connected in $I_{\Ss}$.
 
 If $d_3(C,C')>0$, then choose a shortest path in $G_3$ witnessing $d_3(C,C')$. Let $X$ be the first cocircuit on this path, i.e., $X\cap C\neq \emptyset$.  As $\Mm$ is binary, We have that $|X\cap C|=2$. Since $X$ was the first member of a shortest path in $G_3$, there is an element $e$ with $X\setminus C=\{e\}$ and $e$ must be the intersection with the next member $X'$. Since $\Ss$ is an element covering, there is $C''\in \Ss$ containing $e$. As $\Mm$ is binary, we have that $C''$ intersects $C$. Thus, $d_3(C,C'')=0$ and $d_3(C'',C')<d_3(C,C')$. By induction hypothesis $C$ and $C''$ as well as $C''$ and $C'$ are connected in $I_{\Ss}$. This yields the claim.
\end{proof}

It is not sufficient to require that $E$ be covered by $\Cc^*_3$, see the left side of Figure~\ref{fig:notconnected}.
%
Also, the converse of Theorem~\ref{thm:3} does not hold as demonstrated by the right side of Figure~\ref{fig:notconnected}.

\begin{figure}[htb] 
 \includegraphics[width=.8\textwidth]{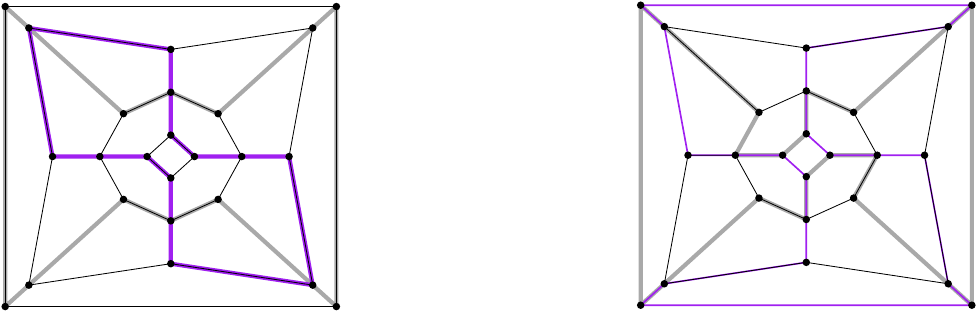}
 \caption{A graph $G$ in which every edge is contained in a bond of size $3$, but the intersection graph of these bonds is not connected. Left: A disconnected cycle covering consisting of four black, one purple and one gray cycle. Right: A connected cycle covering consisting of $3$ cycles. Indeed, one can see $\cc(G)=\ccc(G)=3$.}
 \label{fig:notconnected}
\end{figure}

Using Theorem~\ref{thm:3} and Lemma~\ref{lem:makeconnected} together with results on $\cc$ for $2$-connected graphs, ($\cc(G)\leq\lfloor\frac{2n-1}{3}\rfloor$, see~\cite{Fan-02}), and cubic graphs, ($\cc(G)\leq\lceil\frac{n}{4}\rceil$, if $G$ is cubic and $n\geq 6$, see~\cite{Lai-03} and $\cc(G)\leq\lceil\frac{n}{6}\rceil$, if $G$ is cubic, $3$-connected, $n\geq 8$, and $G$ is not one of five forbidden graphs, see~\cite{Yan-11}), we get some general bounds:
\begin{corollary}\label{cor:2conn}
 Let $G$ be a $2$-connected graph with $n$ vertices. Then,
  \begin{itemize}
                                                            \item $\ccc(G)\leq2\lfloor\frac{2n-1}{3}\rfloor-1$,
                                                            \item $\ccc(G)\leq\lceil\frac{n}{4}\rceil$, if $G$ is cubic and $n\geq 6$.
                                                            \item $\ccc(G)\leq \lceil\frac{n}{6}\rceil$, if $G$ is cubic, $3$-connected, $n\geq 8$, and $G$ is not one of five forbidden graphs.
                                                           \end{itemize}
\end{corollary}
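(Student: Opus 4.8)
The plan is to derive all three bounds of Corollary~\ref{cor:2conn} as direct consequences of results already assembled in this section, with essentially no new work beyond invoking them in the right order. The common strategy is: a $2$-connected graph has minimum degree at least $2$ and no cut-vertices, and one wants to control $\ccc(G)=\ccc(\Mm(G))$ either via the ``doubling'' bound of Lemma~\ref{lem:makeconnected} or via the coincidence $\cc(G)=\ccc(G)$ furnished by Theorem~\ref{thm:3}, combined with the cited external bounds on the (possibly disconnected) cycle cover number $\cc(G)$.

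First I would handle the first item. Here $G$ is merely $2$-connected with $n$ vertices, so I would simply quote the bound $\cc(G)\leq\lfloor\frac{2n-1}{3}\rfloor$ of~\cite{Fan-02}, and then apply the second inequality of Lemma~\ref{lem:makeconnected}, namely $\ccc(\Mm)\leq 2\cc(\Mm)-1$, to a minimum cycle cover of $G$. This immediately gives $\ccc(G)\leq 2\lfloor\frac{2n-1}{3}\rfloor-1$. No structural hypothesis beyond $2$-connectedness is needed, since Lemma~\ref{lem:makeconnected} only needs $\Mm(G)$ to be connected, which holds as $G$ is connected.

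For the second and third items, $G$ is cubic, so every edge lies in a bond of size exactly $3$ (the three edges at an endpoint), and since $G$ is connected (indeed $2$-connected) the intersection graph of these size-$\leq 3$ cocircuits is connected: any two edge-stars share an edge whenever the corresponding vertices are adjacent, and $G$ is connected, so the intersection graph is connected and covers $E$. Hence the hypotheses of Theorem~\ref{thm:3} are satisfied for $\Mm=\Mm(G)$, which is binary and connected, and we conclude $\cc(G)=\ccc(G)$. Now I would plug in the cited cycle-cover bounds: $\cc(G)\leq\lceil\frac{n}{4}\rceil$ for cubic $G$ with $n\geq 6$~\cite{Lai-03}, and $\cc(G)\leq\lceil\frac{n}{6}\rceil$ for cubic $3$-connected $G$ with $n\geq 8$ avoiding the five exceptional graphs~\cite{Yan-11}. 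Substituting $\ccc(G)=\cc(G)$ yields the last two displayed inequalities verbatim.

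The only point requiring a little care — and the closest thing to an obstacle — is verifying the hypothesis of Theorem~\ref{thm:3} in the cubic case, i.e.\ that the intersection graph $G_3$ of the size-$3$ bonds is connected: one must note that in a cubic graph the minimal edge-cut at a single vertex is exactly the set of its three incident edges, that this is a bond (its removal isolates that vertex, hence disconnects $G$ as long as $n\geq 2$, and it is inclusion-minimal), that these stars cover every edge, and that adjacency in $G$ forces a shared edge so that connectedness of $G$ transfers to connectedness of $G_3$. Once this observation is in place the corollary is a routine assembly of Theorem~\ref{thm:3}, Lemma~\ref{lem:makeconnected}, and the three external results, and nothing deeper is involved.
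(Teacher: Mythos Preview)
Your proposal is correct and follows exactly the paper's route: Lemma~\ref{lem:makeconnected} combined with~\cite{Fan-02} for the first item, and Theorem~\ref{thm:3} combined with~\cite{Lai-03,Yan-11} for the two cubic items, with the verification that in a cubic graph the vertex-stars are size-$3$ bonds covering $E$ whose intersection graph inherits connectedness from $G$. The only imprecision is the remark that ``$\Mm(G)$ is connected as $G$ is connected'': matroid connectedness of $\Mm(G)$ requires $G$ to be $2$-connected, not merely connected, but since $2$-connectedness is the standing hypothesis this is harmless.
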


We can also find some bounds involving the size of the ground set, the rank, the \emph{circumference} $\crc(\Mm)$, i.e., the size of the largest circuit of $\Mm$, and the \emph{cogirth} $g^*(\Mm)$, i.e., the size of a smallest cocircuit of $\Mm$.

\begin{theorem}\label{thm:edgevertexbound}
 For any regular matroid $\Mm$ we have $\frac{|E|-1}{\crc(\Mm)}\leq\ccc(\Mm)\leq |E|-r(\Mm)+2-g^*(\Mm)$.
\end{theorem}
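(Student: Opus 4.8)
\textbf{Proof plan for Theorem~\ref{thm:edgevertexbound}.} The statement has two inequalities, and I would handle them separately. For the lower bound $\frac{|E|-1}{\crc(\Mm)}\leq\ccc(\Mm)$, the plan is purely combinatorial: take a connected element covering $\Ss$ of minimum size $\ccc(\Mm)$. Each circuit in $\Ss$ has at most $\crc(\Mm)$ elements, so $\Ss$ covers at most $\crc(\Mm)\cdot\ccc(\Mm)$ elements if we count with multiplicity. But this only gives $|E|\leq\crc(\Mm)\cdot\ccc(\Mm)$, i.e.\ the bound without the $-1$. To sharpen it, I would use connectivity of $I_{\Ss}$: order the circuits $C_1,\dots,C_k$ of $\Ss$ so that each $C_i$ with $i\geq 2$ meets $C_1\cup\dots\cup C_{i-1}$ (possible since $I_{\Ss}$ is connected, taking a spanning-tree order). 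Then $C_1$ contributes at most $\crc(\Mm)$ new elements and each subsequent $C_i$ contributes at most $\crc(\Mm)-1$ new elements, so $|E|\leq\crc(\Mm)+(k-1)(\crc(\Mm)-1)\leq k\cdot\crc(\Mm)-(k-1)$. This gives $|E|-1\leq k(\crc(\Mm)-1)<k\cdot\crc(\Mm)$... let me instead just use $|E|\le k\,\crc(\Mm)-(k-1)\le k\,\crc(\Mm)-1$ when $k\ge 1$... actually cleanly $|E|-1\le k(\crc(\Mm)-1)\le k\,\crc(\Mm)$ gives $\frac{|E|-1}{\crc(\Mm)}\le k=\ccc(\Mm)$, which is exactly what we want; so the spanning-tree ordering argument is the key point here.

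For the upper bound $\ccc(\Mm)\leq |E|-r(\Mm)+2-g^*(\Mm)$, I would construct an explicit connected element covering. Recall $|E|-r(\Mm)$ is the corank, i.e.\ the rank of the dual $\Mm^*$. The rough idea: a cocircuit of minimum size $g^*(\Mm)$ together with the dual structure should let us build a covering economically. Concretely, I would fix a basis $B$ of $\Mm$; then the fundamental circuits $C(B,e)$ for $e\in E\setminus B$ form a covering of $E$ of size $|E|-r(\Mm)$ (since every element of $B$ lies in some fundamental circuit — this uses connectedness of $\Mm$), and the intersection graph of the fundamental circuits is connected precisely because they all share elements of $B$... but this requires each $C(B,e)$ to meet $B$ nontrivially, which holds since a circuit is not contained in the independent set $E\setminus B$. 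So $\{C(B,e):e\in E\setminus B\}$ is already a connected covering, giving $\ccc(\Mm)\le|E|-r(\Mm)$. To improve this to $|E|-r(\Mm)+2-g^*(\Mm)$ — which is a \emph{smaller} quantity when $g^*(\Mm)\ge 3$, hence a genuine improvement — I would choose $B$ cleverly relative to a minimum cocircuit $D$ with $|D|=g^*(\Mm)$: the elements of $D$ can be made to appear together in a way that lets several fundamental circuits be replaced or merged. The precise mechanism is that picking $B$ to avoid all but one element of $D$ (possible since $D$ is a cocircuit, so $E\setminus D$ is a hyperplane and $B$ can be taken inside a basis extending a basis of that hyperplane, leaving $|D|-1$ elements of $D$ outside $B$) means those $|D|-1$ fundamental circuits $C(B,e)$ for $e\in D\setminus B$ all meet $D$; by binary/regular exchange they can be combined so that a single circuit handles this part, saving $|D|-2=g^*(\Mm)-2$ circuits off the count $|E|-r(\Mm)$.

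I expect the upper bound to be the main obstacle. The lower bound is a one-paragraph counting argument once the spanning-tree ordering is in place. For the upper bound, the delicate point is verifying that the savings claimed from the minimum cocircuit $D$ are actually realizable while keeping the intersection graph connected — i.e.\ that after deleting the $g^*(\Mm)-2$ fundamental circuits and adding back a single replacement circuit (or reorganizing around $D$), we still cover all of $E$ and the resulting intersection graph stays connected. This hinges on regularity (to have the totally-unimodular/circuit-exchange behavior) and on $\Mm$ being connected (so no part of $E$ gets orphaned). I would be careful to state and use the fact that in a regular matroid the symmetric difference of circuits is a disjoint union of circuits, which is what lets the $g^*(\Mm)-1$ circuits around $D$ collapse down appropriately. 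One should also double-check the edge cases $g^*(\Mm)=1$ (impossible here, as a connected matroid admitting any circuit covering has no coloops) and $g^*(\Mm)=2$ (where the bound reads $\ccc(\Mm)\le|E|-r(\Mm)$, matching the naive fundamental-circuit construction), so the substantive content is really for $g^*(\Mm)\ge 3$.
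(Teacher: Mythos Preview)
Your lower-bound argument is essentially the paper's: order a minimum connected covering along a spanning tree of $I_{\Ss}$ and count new elements, obtaining $|E|\leq\crc(\Mm)+(\ccc(\Mm)-1)(\crc(\Mm)-1)$, which rearranges to the claimed inequality.

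For the upper bound your approach diverges from the paper's and contains a real gap. The paper does not use fundamental circuits at all; instead it fixes an element $e$ lying on a minimum cocircuit (with $\Mm/e$ connected) and quotes a result of Lemos~\cite{Lem-06}: for regular connected $\Mm$, the minimum number $\theta_e(\Mm)$ of circuits \emph{through $e$} needed to cover $E$ satisfies $\theta_e(\Mm)+g^*_e(\Mm)\leq |E|-r(\Mm)+2$. Since any family of circuits sharing the element $e$ automatically has connected intersection graph, $\ccc(\Mm)\leq\theta_e(\Mm)\leq |E|-r(\Mm)+2-g^*(\Mm)$ follows at once.

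Your fundamental-circuit plan correctly yields the weaker $\ccc(\Mm)\le|E|-r(\Mm)$, but the proposed saving of $g^*(\Mm)-2$ circuits does not go through. With $B$ chosen so that $B\cap D=\{b_0\}$, each $C(B,e)$ with $e\in D\setminus B$ indeed contains $b_0$ (by circuit--cocircuit orthogonality), and no $C(B,f)$ with $f\notin D$ does. Hence if you drop those $g^*(\Mm)-1$ fundamental circuits and add a single replacement $C'$, then $C'$ must cover $b_0$ together with every element of $D\setminus B$, i.e.\ $C'\supseteq D$. But a circuit containing a given cocircuit need not exist: in $\Mm(K_4)$ a minimum cocircuit $D$ is the three edges at a vertex, and no cycle of $K_4$ contains all three. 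Yet the theorem asserts $\ccc(K_4)\le 6-3+2-3=2$, which is correct (two $4$-cycles sharing two edges do the job) --- just not via your construction. Your appeal to symmetric differences does not rescue this: in a binary matroid a symmetric difference of circuits is a \emph{disjoint union} of circuits, not a single one, and elements covered an even number of times vanish, so the result neither stays a single circuit nor preserves coverage. The upper bound genuinely needs a different idea; the paper's route via circuits through a fixed element and the Lemos inequality is what makes it work.
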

\begin{proof}
 We start by proving the lower bound: The most optimistic way to find a connected covering is taking only circuits of maximal size, i.e., $\crc(\Mm)$. Moreover, since their intersection graph is connected they can be ordered such that each of them (except the first) shares at least one element with some earlier chosen one. Thus with $s$ such chosen circuits we cover $\crc(\Mm)+(s-1)(\crc(\Mm)-1)$ elements. So, this value should be at least $|E|$. From this we compute $s\geq \frac{|E|-1}{\crc(\Mm)}$.
 
 Given a matroid $\mathrm{\Mm}=(E,\mathrm{C})$, following~\cite{Lem-06} we denote by $\theta_e(\Mm)$ the size of a smallest set $\Ss'$ of circuits in $\Ss_e$ needed to cover $E$. Evidently, such $\Ss'$ is a connected element covering
 of $\Mm$ and thus $\ccc(\Mm)\leq \theta_e(\Mm)$ for all $e\in E$. Moreover, denote by $g^*_e(\Mm)$ the size of a smallest cocircuit containing $e$ and by $r(\Mm)$ the rank of $\Mm$. In~\cite[Corollary 1.5]{Lem-06} it is shown that if $\Mm$ is connected, regular and not a coloop, and $e\in E$ such that $\Mm/e$ is connected, then $\theta_e(\Mm)+g^*_e(\Mm)\leq |E|-r(\Mm)+2$. This immediately gives the result.
 
%
 \end{proof}

%

 A binary matroid is called \emph{Eulerian} if all cocircuits are of even size.
 \begin{lemma}\label{lem:Eulerian}
  If $\Mm$ is an Eulerian matroid, then $\ccc(\Mm)\neq 2$.
 \end{lemma}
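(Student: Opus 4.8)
The statement to prove is: if $\Mm$ is an Eulerian binary matroid (all cocircuits of even size), then $\ccc(\Mm)\neq 2$. The idea is to argue by contradiction: suppose there is a connected element covering $\Ss=\{C_1,C_2\}$ consisting of exactly two circuits. Since $I_{\Ss}$ must be connected, we have $C_1\cap C_2\neq\emptyset$, and since $\Ss$ covers $E$, we get $E=C_1\cup C_2$. The key tool is the standard fact in binary matroids that any circuit and any cocircuit meet in an even number of elements. I would combine this with a counting argument over a well-chosen cocircuit to produce a parity contradiction.

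\textbf{Key steps.} First I would set up the partition of $E$ induced by $C_1$ and $C_2$: write $A=C_1\setminus C_2$, $B=C_2\setminus C_1$, and $D=C_1\cap C_2$, so $E=A\sqcup B\sqcup D$ with $D\neq\emptyset$ and $C_1=A\cup D$, $C_2=B\cup D$. Next, pick any cocircuit $D^*$ of $\Mm$. By the orthogonality of circuits and cocircuits in a binary matroid, $|D^*\cap C_1|$ and $|D^*\cap C_2|$ are both even. Writing $a=|D^*\cap A|$, $b=|D^*\cap B|$, $d=|D^*\cap D|$, this says $a+d$ and $b+d$ are both even, hence $a\equiv b\equiv d\pmod 2$. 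Since $\Mm$ is Eulerian, $|D^*|=a+b+d$ is even as well; together with $a\equiv b\equiv d$ this forces $3d\equiv 0\pmod 2$, i.e. $d$ is even, and then $a,b$ are even too. So \emph{every} cocircuit meets each of the three blocks $A$, $B$, $D$ in an even number of elements.

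\textbf{Finishing the contradiction.} Now the point is that this parity condition is too strong. Consider the matroid $\Mm$ restricted/contracted appropriately so that the block $D$ (or a single element of $D$) becomes isolated in a parity sense. Concretely: since $D\neq\emptyset$, pick $e\in D$. In a connected binary matroid, every element lies in some cocircuit, and in fact one can choose a cocircuit $D^*$ with $D^*\cap D=\{e\}$ as long as $e$ is not forced into every cocircuit together with another element of $D$ — this needs a short argument using connectivity (e.g. taking a cocircuit in $\Mm$ minimal subject to containing $e$, or exploiting that $\{e\}$ is not a cocircuit because a circuit covering exists, then adjusting). For such $D^*$ we would get $d=|D^*\cap D|=1$, contradicting that $d$ is even. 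This would establish $\ccc(\Mm)\neq 2$. An alternative, perhaps cleaner route that avoids the delicate choice of cocircuit: use the fact (valid for binary matroids) that the blocks $A$, $B$, $D$ being each "even against all cocircuits" means each is a disjoint union of circuits — but $A\subsetneq C_1$ is a proper nonempty subset of a circuit (when $A\neq\emptyset$), which cannot contain a circuit, so $A=\emptyset$; symmetrically $B=\emptyset$; then $C_1=C_2=D$, contradicting $C_1\neq C_2$ (distinct circuits) unless $\Mm$ has a single circuit, in which case $\ccc(\Mm)=1\neq 2$ anyway.

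\textbf{Main obstacle.} The delicate point is justifying "each of $A,B,D$ is a modular/even subset forces it to be a union of circuits" — in a binary matroid, a subset $X$ meeting every cocircuit in an even set is precisely a \emph{cycle} (element of the cycle space, i.e. a disjoint union of circuits), which is standard; the only care needed is the edge case where one of $A$ or $B$ is empty, handled directly as above, and the case where $\Mm$ has just one circuit. So I expect the proof to be short once the orthogonality-plus-Eulerian parity computation is in place; the main thing to get right is the clean deduction that a proper nonempty subset of a single circuit cannot be a nonempty cycle, which is immediate from circuit minimality.
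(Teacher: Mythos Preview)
Your alternative route is correct and is essentially the paper's proof. The paper sets $N=C_1\cap C_2$ (your $D$), observes from binary orthogonality and the Eulerian hypothesis that $|X\cap N|$ is even for every cocircuit $X$, and then invokes the standard fact that a set never meeting a cocircuit in exactly one element must be dependent; since $N$ is a nonempty proper subset of the circuit $C_1$, this is a contradiction. Your parity computation $a\equiv b\equiv d$ together with $a+b+d$ even is just a rephrasing of the paper's $|X\cap N|=|X\cap C_1|+|X\cap C_2|-|X|$, and your ``even against all cocircuits $\Rightarrow$ cycle'' is the binary specialization of the same dependence fact. The detour through $A$ and $B$ is harmless but unnecessary: once you know $D$ is a nonempty cycle contained in the circuit $C_1$, you already have the contradiction.

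Your first route, by contrast, is not a separate argument: producing a cocircuit $D^*$ with $D^*\cap D=\{e\}$ is exactly equivalent to asserting that $D$ is independent, which is precisely the fact you end up citing in the alternative route. So that paragraph should be dropped rather than ``justified by a short argument.''
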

 \begin{proof}
  Suppose $\ccc(\Mm)=2$ witnessed by circuits $C_1, C_2$ covering the entire ground set and $C_1\cap C_2=N\neq\emptyset$. Every cocircuit $X$ is even and since $\Mm$ is binary $X$ intersects both $C_1$ and $C_2$ in an even number of elements. This implies, that $|X\cap N|$ is even for all cocircuits $X$. It is a well-known fact that if $N\neq\emptyset$ and, for each cocircuit $X$, $|X\cap N|\neq 1$, then $N$ contains a circuit. This contradicts $C_1$ and $C_2$ being circuits.
 \end{proof}

Even if Lemma~\ref{lem:Eulerian} seems relatively weak, it provides tight lower bounds in a large family of cographic matroids as we will see in the next section.

\section{The hypercube and the complete graph}\label{sec:graph}

In this section we determine $\cc, \ccc, \bc$, and $\cbc$ for the class of hypercubes and complete graphs, where $Q_n$ is the \emph{$n$-dimensional hypercube} consisting of vertices $\{0,1\}^n$ connected by an edge whenever they differ in exactly one coordinate.  We will make use of some lemmas of the previous section and prove some bounds to be tight.
The next result for odd $n$ shows that the lower bounds in Theorem~\ref{thm:edgevertexbound} and Lemma~\ref{lem:makeconnected} can indeed be attained:

\begin{theorem}\label{thm:Qn}
 For every $n\geq 3$ we have  $$
\ccc(Q_n)=\left\lceil\frac{n+1}{2}\right\rceil= \begin{cases}
 \cc(Q_n)+1 & \text{if }n\text{ even}, \\
 \cc(Q_n) & \text{if }n\text{ odd}.\\
\end{cases}
$$
\end{theorem}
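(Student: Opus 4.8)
The plan is to compute $\ccc(Q_n)$ by combining the general lower bounds from Theorem~\ref{thm:edgevertexbound} and Lemma~\ref{lem:makeconnected} (and Lemma~\ref{lem:Eulerian}) with an explicit construction of a connected cycle cover of the right size. First I would recall the basic parameters of the hypercube graph $Q_n$: it has $2^n$ vertices, $n2^{n-1}$ edges, it is $n$-regular, bipartite, its girth is $4$, and — crucially for the lower bound — its circumference is $\crc(Q_n)=2^n$, since $Q_n$ is Hamiltonian for all $n\geq 2$. Plugging $|E|=n2^{n-1}$ and $\crc(\Mm(Q_n))=2^n$ into the lower bound $\ccc(\Mm)\geq\frac{|E|-1}{\crc(\Mm)}$ gives $\ccc(Q_n)\geq\frac{n2^{n-1}-1}{2^n}$, which is slightly less than $\frac{n}{2}$; rounding up yields $\ccc(Q_n)\geq\lceil n/2\rceil$. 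This is the correct value only for $n$ odd, so for $n$ even I would additionally invoke that $Q_n$ is Eulerian (every vertex has even degree $n$, hence every bond/cocircuit of $\Mm(Q_n)$ has even size — in fact cographic Eulerianness here means all cocircuits even) to apply Lemma~\ref{lem:Eulerian} and rule out $\ccc(Q_n)=2$ when $n=2$; more to the point, for general even $n$ I expect the parity obstruction from Lemma~\ref{lem:Eulerian}-style reasoning (a two-cycle cover would force the intersection to contain a cycle, contradicting being cycles) to push the bound up by one to $\frac{n}{2}+1=\lceil\frac{n+1}{2}\rceil$. Thus in all cases $\ccc(Q_n)\geq\lceil\frac{n+1}{2}\rceil$.

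For the matching upper bound I would exhibit a connected cycle cover of $Q_n$ of size $\lceil\frac{n+1}{2}\rceil$. The natural idea is to use Hamiltonian cycles: since $Q_n$ decomposes (for $n$ even) into $n/2$ Hamiltonian cycles by the classical Aubert–Schneider / Ringel result that $Q_n$ is the edge-disjoint union of $\lfloor n/2\rfloor$ Hamiltonian cycles (plus a perfect matching when $n$ is odd), one gets a cycle cover by $\lceil n/2\rceil$ cycles — but a perfect matching is not a cycle, so for odd $n$ one must instead cover the leftover matching edges by absorbing them into slightly larger or extra cycles. The cleanest route is probably an inductive construction on $n$: write $Q_{n}=Q_{n-1}\times K_2$, take a connected cycle cover of each of the two copies of $Q_{n-1}$ of size $\lceil n/2\rceil$ (for $n$ odd) using the induction hypothesis on $\ccc(Q_{n-1})=\lceil n/2\rceil$, then add one further cycle (a $4$-cycle or a carefully chosen cycle using the matching edges between the two copies) that (i) covers all $2^{n-1}$ cross-edges — impossible with a single short cycle, so actually one needs the construction to route the Hamiltonian cycles through both layers. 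I would therefore more likely build the cover directly: take $\lfloor n/2\rfloor$ edge-disjoint Hamiltonian cycles of $Q_n$ covering all but (for odd $n$) a perfect matching $M$; since any two Hamiltonian cycles share a vertex, their intersection graph is already connected; then for odd $n$ replace one Hamiltonian cycle $H$ by one extra cycle that together with $H$ covers $E(H)\cup M$ — e.g. take the symmetric difference / a cycle through $M$-edges — ensuring the new cycle meets the rest, giving total $\lceil n/2\rceil+? $. Getting this count to land exactly on $\lceil\frac{n+1}{2}\rceil$ (namely $\frac n2+1$ for even $n$ and $\frac{n+1}{2}$ for odd $n$) is the delicate bookkeeping.

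Finally I would record the relation to $\cc(Q_n)$: a (not necessarily connected) cycle cover can use the Hamiltonian decomposition plus, for odd $n$, one more cycle absorbing the matching, or for even $n$ exactly $n/2$ Hamiltonian cycles; one shows $\cc(Q_n)=\lceil n/2\rceil$ for all $n\geq 3$ (the lower bound again from $\cc(\Mm)\geq\frac{|E|}{\crc(\Mm)}=\frac n2$, rounded up — and for odd $n$ one must check $\lceil n/2\rceil=\frac{n+1}{2}$ is actually achievable, which it is since a perfect matching plus Hamiltonian cycles works and can be merged). Comparing, $\ccc(Q_n)=\lceil\frac{n+1}{2}\rceil$ equals $\cc(Q_n)$ when $n$ is odd and equals $\cc(Q_n)+1$ when $n$ is even, as claimed. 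The main obstacle I anticipate is the even case of the lower bound: showing $\ccc(Q_n)\neq\frac n2$ for even $n\geq 4$, i.e.\ that no connected cycle cover achieves the naive bound $n/2$. Lemma~\ref{lem:Eulerian} only rules out $\ccc=2$, so one needs a genuinely new parity or counting argument — presumably: if $\frac n2$ cycles cover all $n2^{n-1}$ edges and have connected intersection graph, then (since each has $\leq 2^n$ edges and connectivity forces $\geq \lceil\frac n2\rceil -1$ re-used edges across the "spanning-tree" of the intersection graph) the total edge-count $\frac n2\cdot 2^n$ exactly meets $n2^{n-1}$ with zero slack, forcing all cycles Hamiltonian and pairwise edge-disjoint except for the shared vertices — and then an Eulerian/parity argument on the shared vertex structure yields a contradiction. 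Nailing that contradiction cleanly is where the real work lies.
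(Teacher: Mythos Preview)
Your proposal contains a misunderstanding that inverts the difficulty of the two parities. In the graphic matroid $\Mm(G)$ the ground set is the edge set of $G$, so the intersection graph $I_{\Ss}$ joins two cycles if and only if they share an \emph{edge}, not merely a vertex. Your claim ``since any two Hamiltonian cycles share a vertex, their intersection graph is already connected'' is therefore false: an edge-disjoint Hamiltonian decomposition yields an $I_{\Ss}$ with no edges at all. This has two consequences.

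First, the even-$n$ lower bound you agonise over is in fact immediate. Your own counting shows that any $\frac{n}{2}$ cycles covering all $n2^{n-1}$ edges must each be Hamiltonian and pairwise edge-disjoint; hence $I_{\Ss}$ is totally disconnected on $\frac{n}{2}\geq 2$ vertices. No further parity or Eulerian argument is needed, and indeed Lemma~\ref{lem:Eulerian} is irrelevant here --- this is exactly how the paper handles it in one sentence.

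Second, and conversely, the even-$n$ upper bound is \emph{not} free: your $\frac{n}{2}$ Hamiltonian cycles do not form a connected cover, so you genuinely need a $(\frac{n}{2}+1)$th cycle that shares an edge with each of them. The paper does this by taking a coordinate perfect matching $X$ (a bond, hence every Hamiltonian cycle crosses it) and invoking Fink's theorem that every perfect matching of $Q_n$ extends to a Hamiltonian cycle.

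Finally, the odd-$n$ upper bound is where the real work lies, and your sketch does not reach a construction. You need $\frac{n+1}{2}$ cycles that together cover every edge and whose edge-intersection graph is connected; simply adding ``one extra cycle'' on top of a $\frac{n-1}{2}$-cycle decomposition cannot work, since that cycle would have to contain all $2^{n-1}$ leftover matching edges. The paper takes a Hamiltonian decomposition of each of the two copies of $Q_{n-1}$, splices corresponding cycles together through selected cross-edges to obtain $\frac{n-1}{2}$ large cycles, and then carefully builds one further Hamiltonian cycle (via a blow-up of a Hamiltonian cycle in $Q_{n-2}$, with a parity repair along one edge) that picks up all remaining uncovered edges and meets every other cycle in an edge. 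That construction is the substantive part of the proof, and it is absent from your plan.
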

\begin{proof} 
 For the upper bound if $n$ is even we use that by~\cite{Als-90} the edges of $Q_n$ can be partitioned into $\frac{n}{2}$ Hamiltonian cycles, which proves $\cc(Q_n)=\frac{n}{2}$ in this case. Clearly, since this is a partition into Hamiltonian cycles no connected covering with $\frac{n}{2}$ cycles exists. Now take a bond $X$ corresponding to a change in one coordinate of $Q_n$. Since $X$ is a bond all Hamiltonian cycles have to intersect it. Now, since the coordinate corresponding to $X$ can be switched at any vertex of $Q_n$, $X$ is a perfect matching. Therefore $X$ can be extended to a Hamiltonian cycle, which intersects all the others, see~\cite{Fin-07}. This concludes the case $n$ even.
 \medskip
 
 If $n$ is odd, first note that $\ccc(Q_n)\geq\lceil\frac{n+1}{2}\rceil$ results from just plugging the values into Theorem~\ref{thm:edgevertexbound}. 
 For the upper bound choose two copies of $Q_{n-1}$, where vertices $v,v'$ in different copies correspond to each other in the natural way, i.e., $v$ corresponds to $v'$ if $v=(x,0)$ and $v'=(x,1)$. Denote by $X_n$ the matching induced between corresponding vertices in $Q_n$. Now, take the partition $P$ of $Q_{n-1}$ into Hamiltonian cycles and its copy $P'$ partitioning the copy $Q'_{n-1}$. The coordinate matching $X_{n-1}$ in $Q_{n-1}$ intersects every Hamiltonian cycle in $P$. Denote by $X\subseteq X_{n-1}$ a matching hitting each cycle in $P$ exactly once and by $X'$ its copy in $Q'_{n-1}$.
 For every Hamiltonian cycle $C\in P$ and its copy $C'\in P'$ take the unique matching edges of $e=\{v,w\}\in X$ and $e'=\{v',w'\}\in X'$ intersecting precisely $C$ and $C'$, respectively. Delete them from $C$ and $C'$ and join both cycles by adding $\{v,v'\}$ and $\{w,w'\}$, to obtain a new cycle $C''$. We have obtained a set $\widetilde{\Ss}$ of $\frac{n-1}{2}$ cycles. The edges of $Q_n$ still not covered by $\widetilde{\Ss}$ are precisely $X\cup X'$ and all edges of $\{u,u'\}\in X_n$ with $u$ not incident to $X$. Note that this set of edges forms a perfect matching $\widetilde{X}$. We have to cover $\widetilde{X}$ by a Hamiltonian cycle $\widetilde{C}$ which additionally intersects all cycles in $\widetilde{\Ss}$. For the construction we contract $X_n$ and $X_{n-1}\cup X'_{n-1}$ in $Q_n$ obtaining $Q_{n-2}$ (with parallel edges). Every vertex in $Q_{n-2}$ corresponds to a square in $Q_n$, which contains either two edges of $X\cup X'$ or two edges of $X_n$, which still have to be covered respectively. We call a vertex of type $I$ and $II$, depending on this. Moreover, by construction both remaining edges in such a square belong to exactly one $C\in\widetilde{\Ss}$. We say that a vertex of $Q_{n-2}$ meets $C$. Note that there are $\frac{n-1}{2}$ vertices of type $I$ each meeting one of the $\frac{n-1}{2}$ cycles in $C\in\widetilde{\Ss}$. All remaining vertices are of type $II$. 
 
 If $\frac{n-1}{2}$ is even let $H$ be any Hamiltonian cycle in $Q_{n-2}$. We can blow $H$ up to the desired $\widetilde{C}$ in $Q_n$ by just locally prescribing how to behave in the resulting squares. See the left of Figure~\ref{fig:cube}. Filling a type $I$ square corresponds to changing the coordinate inside the square along $X_n$ and a type $I$ square along $X_{n-1}\cup X'_{n-1}$. By the parity assumption and since $Q_{n-2}$ has an even number of vertices the numbers of type $I$ and type $II$ vertices are even. And our construction closes nicely and gives a cycle in $Q_n$.
 
 \begin{figure}[htb] 
 \includegraphics[width=.5\textwidth]{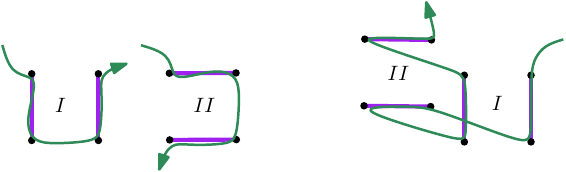}
 \caption{Blowing up $H$. Left: usual handling of vertices of type $I$ and $II$. Right: special treatment of the edge $e$. Purple edges are those needed to be covered by $\widetilde{C}$.}
 \label{fig:cube}
\end{figure}
 
 If $\frac{n-1}{2}$ is odd chose an edge $e$ in $Q_{n-2}$ connecting a type $I$ vertex $a$ meeting $C$ and a type $II$ vertex $b$, such that some other vertex in $Q_{n-2}$ different from $a,b$ meets $C$. Such an edge clearly exists; since otherwise all neighbors of type $II$ of $a$ different from $b$ meet a cycle different from $C$ and we can as well take an edge connecting $a$ with such a neighbor. Now, we choose a Hamiltonian cycle $H$ containing $e$. In order to obtain $\widetilde{C}$ we handle all vertices as in the case before except $a,b$, see the right of Figure~\ref{fig:cube}. Along $e$ we have to repair the parity in order to close to a cycle in $Q_n$. The choice of $e$ was complicated by the fact that $\widetilde{C}$ will not meet the cycle met by $a$ in the square resulting from $a$.
 \end{proof}

\begin{theorem}\label{thm:Qn*}
For every $n\ge 2$ we have  $\cbc(Q_n)=\bc(Q_n)+1=3$.
\end{theorem}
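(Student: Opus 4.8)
The plan is to prove $\bc(Q_n)=2$ and $\cbc(Q_n)=3$ separately, getting the lower bounds from bipartiteness and the upper bounds from an explicit inductive construction.

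\emph{Lower bounds.} I would first observe that $\cbc(Q_n)=\ccc(\Mm^*(Q_n))$ and that the cocircuits of the cographic matroid $\Mm^*(Q_n)$ are exactly the cycles of $Q_n$; since $Q_n$ is bipartite all of its cycles are even, so $\Mm^*(Q_n)$ is an Eulerian binary matroid and Lemma~\ref{lem:Eulerian} yields $\cbc(Q_n)\neq 2$. Moreover a bond equal to the whole edge set of $Q_n$ would force both classes of a bipartition of $Q_n$ to be simultaneously independent and connected, i.e.\ $Q_n$ to be a star, which fails for $n\ge 2$; hence no single bond covers $E(Q_n)$ and $\cbc(Q_n)\ge\bc(Q_n)\ge 2$. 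Together these give $\cbc(Q_n)\ge 3$.

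\emph{Partitioning $E(Q_n)$ into two bonds.} For the upper bound on $\bc$ I would prove by induction on $n$ the stronger statement that $E(Q_n)$ admits a partition into two bonds $\partial(U_1),\partial(U_2)$ which is \emph{crossing}, meaning all four sets $U_1\cap U_2$, $U_1\cap\bar U_2$, $\bar U_1\cap U_2$, $\bar U_1\cap\bar U_2$ are nonempty. The base case $n=2$ is $C_4$: its two perfect matchings are bonds, they partition $E(C_4)$, all four parts $U_i,\bar U_i$ are connected, and the crossing condition is immediate since $C_4$ has four vertices. For the step, view $Q_n$ as two copies $V^0,V^1$ of $Q_{n-1}$ joined by the perfect matching $M$ of direction-$n$ edges, so $E(Q_n)=E^0\sqcup E^1\sqcup M$. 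Given a crossing partition $\{\partial(U_1),\partial(U_2)\}$ of $Q_{n-1}$ put
\[
U_1':=(U_1\times\{0\})\cup(U_2\times\{1\}),\qquad U_2':=(U_2\times\{0\})\cup(\bar U_1\times\{1\}).
\]
Computing $\partial(U_i')$ separately on $E^0$, on $E^1$ and on $M$ shows that $\partial(U_1')$ and $\partial(U_2')$ partition $E(Q_n)$: on $E^0$ they restrict to $\partial(U_1),\partial(U_2)$, on $E^1$ to $\partial(U_2),\partial(U_1)$, and on $M$ they pick out the matching edges indexed by $U_1\triangle U_2$ and by its complement. Each of $U_1',\bar U_1',U_2',\bar U_2'$ induces a connected subgraph, being a connected subgraph inside $V^0$ and one inside $V^1$ linked through a matching edge — and such a linking edge exists precisely because one of the four intersections of the $Q_{n-1}$-partition is nonempty. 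Those same nonemptinesses show the new partition is again crossing, closing the induction; in particular $\bc(Q_n)=2$.

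\emph{A connected bond cover of size three.} Starting from a partition $E(Q_n)=B_1\sqcup B_2$ into two bonds, I would take a path in $Q_n$ joining an edge of $B_1$ to an edge of $B_2$; somewhere along it lie edges $e\in B_1$ and $e'\in B_2$ sharing a vertex $v$. As $Q_n-v$ is connected for $n\ge 2$, the star $B_3:=\partial(\{v\})$ is a bond, and it meets both $B_1$ and $B_2$, so $\{B_1,B_2,B_3\}$ covers $E(Q_n)$ and has connected intersection graph. Hence $\cbc(Q_n)\le 3$, and with the lower bound $\cbc(Q_n)=3=\bc(Q_n)+1$. The delicate point is the inductive step: copying one partition into both copies of $Q_{n-1}$ and routing $M$ through a single bond forces that bond's vertex set to be antisymmetric across $M$, hence disconnected; the asymmetric choice above (the copy of $U_2'$ in $V^1$ is $\bar U_1$, not $U_1$) repairs this, and the crossing invariant is exactly what supplies the matching edges needed to keep all four vertex sets connected — checking that this invariant propagates is the crux.
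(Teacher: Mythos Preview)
Your proof is correct and follows the same three-part strategy as the paper: Lemma~\ref{lem:Eulerian} for $\cbc(Q_n)\ge 3$, an inductive partition of $E(Q_n)$ into two bonds for $\bc(Q_n)=2$, and a third bond meeting both parts for $\cbc(Q_n)\le 3$. Your inductive step is in fact tidier than the paper's---explicitly carrying the \emph{crossing} invariant is exactly what guarantees that each of $U_i',\bar U_i'$ picks up a linking $M$-edge and stays connected, a point the paper's description via the edge-sets $E_1,\dots,E_4$ leaves implicit---and your concrete choice of a vertex-star for the third bond replaces the paper's use of an arbitrary bond $C\neq A_1,A_2$ together with (implicit) incomparability of circuits.
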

\begin{proof}
Let us prove first by induction on $n$ that $\bc(Q_n)=2$ for every $n\ge 2$. Clearly the proposition holds if $n=2$ and maybe a little less clearly also for $n=3$, see the left of Figure~\ref{fig:Q_n}.  We suppose the result holds for $n-1$ and $n>2$. We can obtain the graph $Q_{n}$ as follows. Choose two copies of $Q_{n-1}$, $A$ and $B$, where vertices in different copies correspond to each other in the natural way, i.e., adding a matching connecting identified pairs yields $Q_{n}$. By induction, there exists a partition of the edges of $A$ in two bonds $A_1$ and $A_2$. Let $B_i$ be the copy of $A_i$ in $B$ for $i=1,2$.
\medskip
Let us define as $[C,C']$, the sets of edges in a graph $G$ having one extreme in $C$ and the other in $C'$, for every $C,C'\subset V(G)$. Let $C_1^{A_j}$ and $C_2^{A_j}$ be the two components of $A-A_j$,  for $j=1,2$. 
Since $C_i^{A_j}$ has a copy in $B$, let $C_i^{B_j}$ be the copy of $C_i^{A_j}$ in $B$ for every $i,j=1,2$. Observe that $C_i^{B_j}$ is one component in $B-B_j$. We consider the following sets of edges:

\begin{center}
\begin{tabular}{ccc}
$E_1$ & $=$ & $E(C_1^{A_1})\cup [C_1^{A_1}\cap C_1^{A_2}, C_1^{B_1}\cap C_1^{B_2}]\cup E(C_1^{B_2})$\\
$E_2$ & $=$ & $E(C_2^{A_1})\cup [C_2^{A_1}\cap C_2^{A_2}, C_2^{B_1}\cap C_2^{B_2}]\cup E(C_2^{B_2})$\\
$E_3$ & $=$ & $E(C_2^{A_2})\cup [C_1^{A_1}\cap C_2^{A_2}, C_1^{B_1}\cap C_2^{B_2}]\cup E(C_1^{B_1})$\\
$E_4$ & $=$ & $E(C_1^{A_2})\cup [C_2^{A_1}\cap C_1^{A_2}, C_2^{B_1}\cap C_1^{B_2}]\cup E(C_2^{B_1})$\\
 
\end{tabular} 
\end{center}

Notice that $(E_1\cup E_2)\bigcup(E_3\cup E_4)=E(Q_{n})$ and $(E_1\cup E_2)\bigcap (E_3\cup E_4)=\emptyset$ which means that $E_1\cup E_2$ and $E_3\cup E_4$ are a partition of the edges of $Q_{n}$. We will see that $E_1\cup E_2$ and $E_3\cup E_4$ are two bonds of $Q_{n}$. For, we can check that $C_1^{A_2}\cup  C_2^{B_1}$  and  $C_2^{A_2}\cup  C_1^{B_1}$ are the two components of $Q_{n}-(E_1\cup E_2)$. Also, we can observe that each edge in $E_1\cup E_2$ is incident to a vertex in $C_1^{A_2}\cup  C_2^{B_1}$  and incident to a vertex in $C_2^{A_2}\cup  C_1^{B_1}$, which means that $E_1\cup E_2$ is a  bond of $Q_{n}$, see Figure~\ref{fig:Q_n}. Similarly, one can see that $E_3\cup E_4$ is a bond of $Q_{n}$.
Then $\bc(Q_n)=2$ for every $n\ge 2$.

\begin{figure}[htb] 
 \includegraphics[width=.9\textwidth]{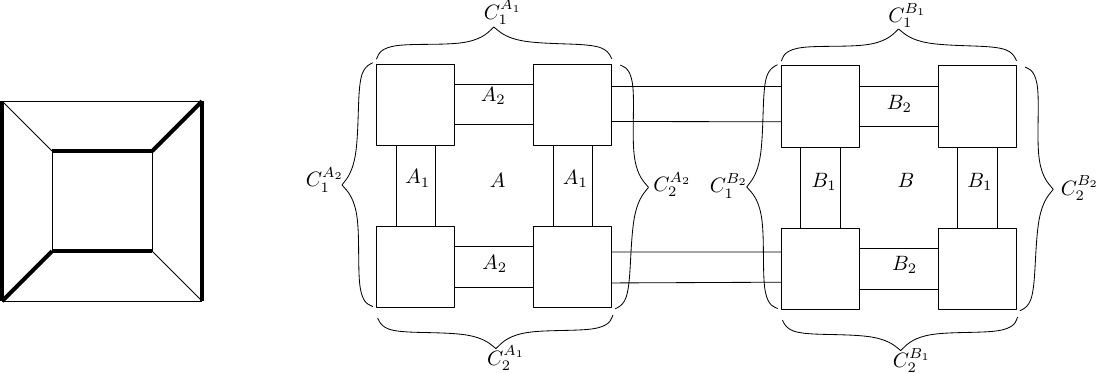}
\caption{Left: a partition of $Q_3$ into two bonds. Right: Extending a partition of $Q_{n-1}$ to a partition of $Q_n$.}
 \label{fig:Q_n}
\end{figure}

Let us prove now that $\cbc(Q_n)=3$ for every $n\ge 2$. We have proved above that there exist a partition of the edges of $Q_n$ in two bonds $A_1$ and $A_2$. Let $C$ be a bond in $Q_n$ different to $A_1$ and $A_2$. Then 
$E(A_1)\cap E(C)\neq\emptyset$ and $E(A_2)\cap E(C)\neq\emptyset$ and hence $\cbc(Q_n)\le 3$ for every $n\ge 2$.
To prove  that $\cbc(Q_n)\ge 3$, we only have to use Lemma~\ref{lem:Eulerian}. As the cographic matroid of $Q_n$ is Eulerian  since $Q_n$ is bipartite,
by Lemma~\ref{lem:Eulerian} it follows that  $\cbc(Q_n)\ge 3$ for every $n\ge 2$.

\end{proof}

%
%
%
%
%
%
 
For the complete graph $K_n$ the lower bounds of Theorem~\ref{thm:edgevertexbound} and Lemma~\ref{lem:makeconnected} are sharp if and only if $n$ is even. More precisely:
\begin{theorem}\label{thm:Kn}
 For every $n\geq 4$ we have 
 $$
\ccc(K_n)=\lceil\frac{n}{2}\rceil= \begin{cases}
 \cc(K_n) & \text{if }n\text{ even}, \\
 \cc(K_n)+1 & \text{if }n\text{ odd}.\\
\end{cases}
$$

\end{theorem}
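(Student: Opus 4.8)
The plan is to establish the two inequalities $\ccc(K_n)\geq\lceil n/2\rceil$ and $\ccc(K_n)\leq\lceil n/2\rceil$, and then separately pin down $\cc(K_n)$ to get the case distinction. For the lower bound I would simply plug the parameters of $K_n$ into Theorem~\ref{thm:edgevertexbound}: the circumference of $\Mm(K_n)$ is $\crc(K_n)=n$ (a Hamiltonian cycle), and $|E|=\binom n2$, so $\frac{|E|-1}{\crc(\Mm)}=\frac{\binom n2-1}{n}=\frac{n-1}{2}-\frac1n$, whose ceiling is $\lceil n/2\rceil$ for $n\geq 4$ (one should just check the rounding carefully: $\binom n2-1 = \frac{n^2-n-2}{2}=\frac{(n-2)(n+1)}{2}$, and $\lceil\frac{(n-2)(n+1)}{2n}\rceil=\lceil\frac n2\rceil$). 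That gives $\ccc(K_n)\geq\lceil n/2\rceil$ for free.

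\textbf{The upper bound.} This is where the actual work lies, and I expect it to be the main obstacle. The goal is to exhibit a connected cycle cover of $K_n$ using exactly $\lceil n/2\rceil$ cycles. For $n$ even the classical fact is that $K_n$ decomposes into $\frac n2$ Hamiltonian paths (or, for $K_{n}$ with $n$ odd, $\frac{n-1}{2}$ Hamiltonian cycles — the Walecki construction); the relevant statement here is that $E(K_n)$ can be covered by $\lceil n/2\rceil$ Hamiltonian cycles whose intersection graph is connected. For $n$ odd, $K_n$ is Eulerian of degree $n-1$, and the Walecki decomposition gives $\frac{n-1}{2}$ edge-disjoint Hamiltonian cycles covering everything; since any two Hamiltonian cycles of $K_n$ share a vertex, their intersection graph is a clique, hence connected, so $\ccc(K_n)\leq\frac{n-1}{2}=\lceil n/2\rceil$ when $n$ is odd. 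For $n$ even, $K_n$ has odd degree $n-1$ and cannot decompose into Hamiltonian cycles; instead I would take a decomposition into $\frac n2$ Hamiltonian paths and then merge/close them cleverly — e.g. take $\frac n2-1$ of the paths, close each of them into a Hamiltonian cycle by adding back the missing edge (these missing edges together with one leftover path form the last structure), and argue that $\lceil n/2\rceil=\frac n2$ Hamiltonian cycles suffice and that a single perfect-matching-type bond argument (as used in the proof of Theorem~\ref{thm:Qn}) shows all of them can be made to meet pairwise. In fact, since any two Hamiltonian cycles of $K_n$ automatically intersect, the only real content is the \emph{covering} with $\lceil n/2\rceil$ Hamiltonian cycles; connectedness is automatic for $n\geq 4$.

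\textbf{Determining $\cc(K_n)$ and finishing.} To get the displayed case distinction I need $\cc(K_n)=\frac n2$ when $n$ is even and $\cc(K_n)=\frac{n+1}{2}$ when $n$ is odd. The lower bound $\cc(K_n)\geq\frac{|E|}{\crc}=\frac{\binom n2}{n}=\frac{n-1}{2}$ forces $\cc(K_n)\geq\lceil\frac{n-1}{2}\rceil$, which is $\frac{n-1}{2}$ for odd $n$ and $\frac n2$ for even $n$; but for odd $n$ a parity/Eulerian refinement is needed to push this to $\frac{n+1}{2}$. The clean argument: for $n$ odd, $K_n$ is Eulerian, and if $\frac{n-1}{2}$ cycles covered $E$ then, since $|E|=\frac{n(n-1)}{2}$ and each cycle has at most $n$ edges, every cycle would have to be a Hamiltonian cycle and they would be edge-disjoint — a decomposition — which is consistent, so actually $\cc(K_n)=\frac{n-1}{2}$ for $n$ odd, matching $\lceil\frac n2\rceil-1$ when... wait: $\lceil\frac n2\rceil=\frac{n+1}{2}$ for odd $n$, and $\frac{n-1}{2}=\frac{n+1}{2}-1$, so indeed $\ccc(K_n)=\cc(K_n)+1$ for odd $n$, and for even $n$, $\cc(K_n)=\frac n2=\lceil\frac n2\rceil=\ccc(K_n)$. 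So the real subtlety is only showing $\ccc(K_n)$ cannot be lowered to $\frac{n-1}{2}$ when $n$ is even and cannot be lowered below $\frac{n-1}{2}$ when $n$ is odd (the Theorem~\ref{thm:edgevertexbound} lower bound already handles both), and that a Hamiltonian decomposition forbids a connected cover of the decomposition size when $n$ is even — exactly the phenomenon exploited in Theorem~\ref{thm:Qn}. The main obstacle throughout is the explicit construction for even $n$ of $\frac n2$ pairwise-intersecting cycles covering $E(K_n)$; I would pattern it on the path-decomposition-plus-closure idea and cite the standard Hamiltonian decomposition results.
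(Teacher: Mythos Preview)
Your proposal has a fundamental misunderstanding that invalidates the entire upper-bound argument. In the graphic matroid $\Mm(K_n)$, a circuit is the \emph{edge set} of a cycle, and two circuits intersect in $I_\Ss$ if and only if the corresponding cycles share an \emph{edge}, not merely a vertex. When you write ``any two Hamiltonian cycles of $K_n$ share a vertex, their intersection graph is a clique, hence connected'' and later ``since any two Hamiltonian cycles of $K_n$ automatically intersect, \ldots\ connectedness is automatic for $n\geq 4$'', you are using vertex-intersection. An edge-disjoint Hamiltonian decomposition of $K_n$ (the Walecki construction for odd $n$) has $I_\Ss$ \emph{totally disconnected}, which is precisely why $\ccc(K_n)>\cc(K_n)$ in the odd case. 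There is also an arithmetic slip: for $n$ odd, $\lceil n/2\rceil=\frac{n+1}{2}$, not $\frac{n-1}{2}$, so your claimed inequality $\ccc(K_n)\leq\frac{n-1}{2}$ would contradict the statement you are trying to prove.

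Your lower bound via Theorem~\ref{thm:edgevertexbound} is also insufficient for odd $n$: $\left\lceil\frac{(n-2)(n+1)}{2n}\right\rceil=\left\lceil\frac{n-1}{2}-\frac{1}{n}\right\rceil=\frac{n-1}{2}$ when $n$ is odd (check $n=5$: you get $\lceil 9/5\rceil=2$, not $3$). The correct lower bound for odd $n$ comes from observing that any cover by $\frac{n-1}{2}$ cycles, each of length at most $n$, covering $\binom{n}{2}=\frac{n-1}{2}\cdot n$ edges must be an edge-disjoint Hamiltonian decomposition, hence disconnected in $I_\Ss$; so $\ccc(K_n)\geq\frac{n-1}{2}+1=\frac{n+1}{2}$.

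What you have dismissed as ``automatic'' is exactly the content of the proof. The paper handles odd $n$ by taking the Hamiltonian decomposition into $\frac{n-1}{2}$ edge-disjoint cycles and then adding \emph{one further short cycle} chosen to share an edge with each of them, reaching $\frac{n+1}{2}$. The even case is harder, not easier: one must produce $\frac{n}{2}$ Hamiltonian cycles covering all edges that genuinely share edges pairwise in a connected pattern. The paper does this via the zig-zag path decomposition $\mathcal{P}_n$, closing each path with a long diagonal to get $\mathcal{H}_n$; for $n\equiv 2\pmod 4$ the long diagonals themselves create the required edge-overlaps, while $n\equiv 0\pmod 4$ needs a separate construction built from $\mathcal{H}_{n-2}$. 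None of this is addressed in your plan.
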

\begin{proof}
We will view $K_n$ as the Cayley graph of $\mathbb{Z}_n$ with connecting set $\mathbb{Z}_n\setminus\{0\}$, where the double edges oriented into opposite directions are seen as undirected edges.

Our construction is based on a well-known partition $\mathcal{P}_n$ of the edges of $K_n$ for even $n$ into $\frac{n}{2}$ Hamiltonian zig-zag-paths, see e.g. \cite{Alsp-Gav01}. The path $P_i$ traverses the vertices as follows: $i, i+1,i+1-2,i+1-2+3,\dots$.
\medskip

Now, if $n$ is odd, take $\mathcal{P}_{n-1}$ and add an edge from the additional vertex $n$ to $P_i$ if it connects to one of the endpoints of $P_i$. This is a well-known construction for a partition of $K_n$ into Hamiltonian cycles. Thus, to obtain a connected cycle covering at least one additional cycle is needed. Indeed, such a cycle is easy to find. Take for instance $C=(0,1,\ldots, \frac{n+1}{2},0)$. By construction of $\mathcal{P}_{n-1}$ this cycle intersects all other 
cycles in the partition.
\medskip

For $n$ even it is well-known that $\cc(K_n)=\frac{n}{2}$, thus this lower bound on $\ccc(K_n)$ follows from Lemma~\ref{lem:makeconnected}. We will show that there are indeed connected cycle covers of that size.

Now, if $n$ is even but not divisible by $4$, take $\mathcal{P}_{n}$ and add an edge to each $P_i$ connecting its end-vertices. This yields a cover $\mathcal{H}_{n}$ of $K_n$ by Hamiltonian cycles $H_i$, which is smallest possible. We show that it is indeed connected. Note that each $H_i$ contains two \emph{long diagonals}, i.e., edges labeled $\frac{n}{2}$, one being the $\frac{n}{2}$th or middle edge $e_i$ of $P_i$ and one being the newly added edge $f_i$. Each such edge is contained in another element of $\mathcal{H}_{n}$. More precisely, we have $e_i=f_{i+\frac{n+2}{4}}$ and therefore in the graph on $\mathcal{H}_{n}$ there is an edge between $H_i$ and $H_{i+\frac{n+2}{4}}$, but $i+\frac{n+2}{4}$ has to be taken modulo $\frac{n}{2}$. By the divisibility conditions on $n$ we get that $\frac{n}{2}$ and $\frac{n+2}{4}$ are coprime and therefore connecting $H_i$ and $H_{i+\frac{n+2}{4}}$ modulo $\frac{n}{2}$ for all $i$ yields a single connected component. This is, $\mathcal{H}_{n}$ is a connected cycle cover of size $\frac{n}{2}$.

The last case concerns $n$ divisible by $4$. If $n=4$ it is easy to find a connected cycle cover of size $2$. Otherwise we take the cycle cover constructed in the paragraph above for $K_{n-2}$ and modify it to cover the complete graph with two additional vertices $v,w$. In $K_{n-2}$, each long diagonal is covered twice. In each cycle $H\in\mathcal{H}_{n-2}$ replace the long diagonal by two consecutive edges passing through $v$ and $w$, respectively.
Denote the resulting set of cycles by $\mathcal{H}'$. It covers all edges but $\{v,w\}$ and the long diagonals of $K_{n-2}$, i.e., edges labeled $\frac{n-2}{2}$ connecting vertices different from $v,w$. We add one more cycle $C$ using all these edges and taking every other edge of the cycle $(0,1,2,\ldots, n-3,0)$ of $K_{n-2}$ except $\{0,1\}$ and $\{\frac{n-2}{2},\frac{n-2}{2}+1\}$. Instead $C$ includes $\{\frac{n-2}{2},v\}$ and $\{\frac{n-2}{2}+1,w\}$ (or $v$ and $w$ permuted) such that on these edges $C$ intersects the cycle arising from $H_0$. All the other $H_i$ are intersected by $C$ via every other edge of the cycle $(0,1,2,\ldots, n-3,0)$ of $K_{n-2}$ since $n-2$ is not divisible by $4$.
\end{proof}

\begin{theorem}\label{thm:Kn*}
 For all $n\geq 2$ we have $\cbc(K_n)=\bc(K_n)=\lceil\log_2(n)\rceil$.
\end{theorem}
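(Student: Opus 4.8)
\textbf{Proof proposal for Theorem~\ref{thm:Kn*}.}

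The plan is to establish $\bc(K_n)=\lceil\log_2 n\rceil$ first and then upgrade the construction to a connected bond cover of the same size. For the lower bound on $\bc(K_n)$, I would use the standard encoding argument: a bond of $K_n$ is exactly a cut $\delta(S)$ for some nonempty proper $S\subsetneq V(K_n)$, and an edge $uv$ lies in $\delta(S)$ iff exactly one of $u,v$ is in $S$. If $\mathrm{B}=\{\delta(S_1),\dots,\delta(S_k)\}$ covers every edge, then for every pair $u\ne v$ there is some $i$ with $u\in S_i\not\ni v$ or vice versa; assigning to each vertex $v$ the characteristic vector $(\mathbf 1[v\in S_1],\dots,\mathbf 1[v\in S_k])\in\{0,1\}^k$, the covering condition says exactly that this map is injective. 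Hence $2^k\ge n$, i.e. $k\ge\lceil\log_2 n\rceil$. Conversely, taking $k=\lceil\log_2 n\rceil$ and any injective assignment of $\{0,1\}^k$-vectors to the vertices, letting $S_i$ be the set of vertices whose $i$th coordinate is $1$ gives $k$ cuts covering all edges; one should note each $S_i$ can be chosen nonempty and proper (pad the vectors so that both $0$ and $1$ occur in each coordinate, which is possible once $n\ge 2$ and $k=\lceil\log_2 n\rceil$, except one must check small cases like $n=2$ where $k=1$ and the single cut $\delta(\{v\})$ works). Finally one must discard any $\delta(S_i)$ that is not inclusion-minimal — but in $K_n$ every cut $\delta(S)$ with $\emptyset\ne S\subsetneq V$ is automatically a bond, since both $G[S]$ and $G[V\setminus S]$ are connected (they are complete), so no pruning is needed. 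This yields $\bc(K_n)=\lceil\log_2 n\rceil$.

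For $\cbc(K_n)$, since trivially $\bc(K_n)\le\cbc(K_n)$ it suffices to exhibit a \emph{connected} bond cover of size $\lceil\log_2 n\rceil$. I would argue that the covering produced above is already connected, or can be made so by a careful choice of the vertex labeling. Two bonds $\delta(S_i)$ and $\delta(S_j)$ share an edge $uv$ precisely when the $i$th and $j$th coordinates of the labels of $u$ and $v$ differ in a "crossing" way — concretely, $\delta(S_i)\cap\delta(S_j)\ne\emptyset$ iff there exist two vertices whose labels differ in both coordinate $i$ and coordinate $j$. For $n\ge 3$ with $k=\lceil\log_2 n\rceil\ge 2$, choosing the labels to include (at least) all of $\{0,1\}^k$ when $n=2^k$, or a suitable dense subset otherwise, one can ensure that for every pair $i,j$ there are two vertices whose labels differ exactly in coordinates $i$ and $j$; then $G(\mathrm B)$ is in fact the complete graph on $k$ vertices, hence connected. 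The only genuinely delicate point is when $n$ is not a power of $2$ and $k$ is forced but the number of available labels, $n$, is small relative to $2^k$ — then one must check that a label set of size $n$ can still be chosen so that $G(\mathrm B)$ stays connected (it need not be complete, just connected), which is an easy finite verification: it suffices that the chosen labels span, under pairwise differences, a connected "coordinate graph", and any $n\ge 3$ labels chosen to contain a Hamming-path through a few vertices already achieve this. The base cases $n=2$ (one bond, trivially connected) and $n=3,4$ should be checked by hand.

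\textbf{Main obstacle.} The lower bound is routine. The real work is the connectivity of the covering in the non-power-of-two case: one needs a labeling of the $n$ vertices by distinct $0/1$ vectors of length $k=\lceil\log_2 n\rceil$ such that the $k$ resulting cuts have pairwise intersections forming a connected auxiliary graph, while $n$ may be only slightly larger than $2^{k-1}$. I expect the cleanest route is to fix the first $2^{k-1}$ labels to be all vectors ending in $0$ (giving a copy of the full cube on the first $k-1$ coordinates, whose cuts already intersect pairwise) and then to add the remaining $n-2^{k-1}$ labels ending in $1$, choosing them so the $k$th cut meets at least one of the first $k-1$ cuts; this attaches the $k$th bond to the already-connected block and finishes the argument.
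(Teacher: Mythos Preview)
Your proposal is correct but follows a genuinely different route from the paper. The paper does not argue connectivity directly at all: instead it invokes Theorem~\ref{thm:3} (the $\Cc^*_3$ lemma), observing that in $K_n$ every edge lies in a triangle and the intersection graph of triangles is connected, whence \emph{every} bond cover of $K_n$ is automatically connected and $\cbc(K_n)=\bc(K_n)$ for free. For the value of $\bc(K_n)$ the paper then cites the classical result that the minimum number of bipartite subgraphs needed to cover a graph $G$ equals $\lceil\log_2\chi(G)\rceil$, which for $G=K_n$ gives $\lceil\log_2 n\rceil$. Your encoding argument is essentially the specialisation of that classical proof to $K_n$, so on the $\bc$ side the two arguments coincide in spirit; the real divergence is on the $\cbc$ side, where you build an explicit connected cover by hand (your labelling with all $2^{k-1}$ vectors ending in $0$ plus at least one vector ending in $1$ does work: for $i,j<k$ pick the all-zero vector and the vector with ones exactly in positions $i,j$, and for the pair $(i,k)$ pick any $w$ ending in $1$ together with the vector obtained from $w$ by flipping coordinates $i$ and $k$). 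The paper's route is shorter and showcases the utility of Theorem~\ref{thm:3}; your route is more self-contained and avoids both that lemma and the external citation, at the cost of the small case analysis for connectivity.
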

\begin{proof}
 First it is easy to see that $\Mm^*(K_n)$ satisfies the preconditions of Theorem~\ref{thm:3}, i.e., every edge is contained in a triangle and the edge-intersection graph of triangles is connected. This implies $\cbc(K_n)=\bc(K_n)$. Now, any set of minimal cuts covering the edges of $K_n$ corresponds to a set of maximal bipartite subgraphs covering the edges. Note that this correspondence holds if and only if the graph is the complete graph. The minimum number of bipartite subgraphs to cover a graph $G$ is $\lceil\log_2(\chi(G))\rceil$, see~\cite{Mat-72,Had-75,Gar-76}. In our special case it yields the result.
\end{proof}

\bibliography{circlit}
\bibliographystyle{my-siam}

\end{document}